\numberwithin{equation}{section}
\newtheorem{Theorem}{Theorem}[section]
\newtheorem*{Theorem*}{Theorem}
\newtheorem{Corollary}[Theorem]{Corollary}
\newtheorem{Lemma}[Theorem]{Lemma}
\newtheorem{Proposition}[Theorem]{Proposition}
{\theoremstyle{definition}
\newtheorem{Definition}[Theorem]{Definition}
\newtheorem{Example}[Theorem]{Example}
\newtheorem{Remark}[Theorem]{Remark}}
\renewcommand{\Re}{\operatorname{Re}}
\renewcommand{\Im}{\operatorname{Im}}
\DeclareMathOperator{\per}{\mathrm{per}}
\DeclareMathOperator{\id}{\mathrm{id}}
\DeclareMathOperator{\sv}{\mathrm{sv}}
\newcommand{\cP}{\mathcal{P}}
\newcommand{\mot}[0]{\mathfrak{m}}
\newcommand{\dr}[0]{\mathfrak{dr}}
\DeclareMathOperator{\li}{Li}
\newcommand{\Isv}{I^{\mathrm{sv}}}
\newcommand{\Idr}{I^{\mathfrak{dr}}}
\newcommand{\Icsv}{I^{\mathrm{csv}}}
\newcommand{\licsv}{\li^{\mathrm{csv}}}
\newcommand{\Imot}{I^{\mathfrak{m}}}
\newcommand{\Deltatilde}{\widetilde{\Delta}}
\newcommand{\Stilde}{\widetilde{S}}
\newcommand{\cPmpl}[0]{\cP^{\mathfrak{m}}_{\text{MPL}}}
\newcommand{\cPm}[0]{\cP^{\mathfrak{m}}}
\newcommand{\cPdrpl}[0]{\cP^{\mathfrak{dr}}_{\text{MPL}}}
\newcommand{\cPdr}[0]{\cP^{\mathfrak{dr}}}
\DeclareMathOperator{\svmot}{\sv^{\mathfrak{m}}}
\DeclareMathOperator{\cR}{\mathcal{R}}
\DeclareMathOperator{\Cl}{Cl}
\newcommand{\ber}[2]{\mathcal{B}_{#1}(#2)}
\begin{document}
\allowdisplaybreaks

\newcommand{\arXivNumber}{2104.04344}

\renewcommand{\thefootnote}{}

\renewcommand{\PaperNumber}{107}

\FirstPageHeading

\ShortArticleName{Clean Single-Valued Polylogarithms}

\ArticleName{Clean Single-Valued Polylogarithms\footnote{This paper is a~contribution to the Special Issue on Algebraic Structures in Perturbative Quantum Field Theory in honor of Dirk Kreimer for his 60th birthday. The~full collection is available at \href{https://www.emis.de/journals/SIGMA/Kreimer.html}{https://www.emis.de/journals/SIGMA/Kreimer.html}}}

\Author{Steven CHARLTON~$^{\rm a}$, Claude DUHR~$^{\rm b}$ and Herbert GANGL~$^{\rm c}$}

\AuthorNameForHeading{S.~Charlton, C.~Duhr and H.~Gangl}

\Address{$^{\rm a)}$~Fachbereich Mathematik (AZ), Universit\"at Hamburg, Bundesstra\textup{\ss}e 55, \\
\hphantom{$^{\rm a)}$} 20146 Hamburg, Germany}
\EmailD{\href{mailto:steven.charlton@uni-hamburg.de}{steven.charlton@uni-hamburg.de}}

\Address{$^{\rm b)}$~Bethe Center for Theoretical Physics, Universit\"at Bonn, 53115 Bonn, Germany}
\EmailD{\href{mailto:cduhr@uni-bonn.de}{cduhr@uni-bonn.de}}

\Address{$^{\rm c)}$~Department of Mathematical Sciences, Durham University, Durham DH1 3LE, UK}
\EmailD{\href{mailto:herbert.gangl@durham.ac.uk}{herbert.gangl@durham.ac.uk}}

\ArticleDates{Received April 13, 2021, in final form November 28, 2021; Published online December 12, 2021}

\Abstract{We define a variant of real-analytic polylogarithms that are single-valued and that satisfy ``clean'' functional relations that do not involve any products of lower weight functions.	We discuss the basic properties of these functions and, for depths one and two, we present some explicit formulas and results. We also give explicit formulas for the single-valued and clean single-valued version attached to the Nielsen polylogarithms $S_{n,2}(x)$, and we show how the clean single-valued functions give new evaluations of multiple polylogarithms at certain algebraic points.}

\Keywords{multiple polylogarithms; Nielsen polylogarithms; Hopf algebras; Dynkin operator; functional equations; single-valued projection; special values}

\Classification{11G55; 11M32; 33E20; 39B32}

\renewcommand{\thefootnote}{\arabic{footnote}}
\setcounter{footnote}{0}

\section{Introduction}
\subsection{Background and first definitions}\label{sec:definitions}

The logarithm function and generalisations of it have originally been studied, having first been mentioned (in 1696) in correspondence between (Johann) Bernoulli and Leibniz \cite[p.~351]{Leibniz:1696}, by many mathematicians, notably by Abel \cite{Abel_Oeuvres} and Kummer \cite{Kummer,Kummer+,Kummer++} with regard to their functional properties, and by Lobachevsky \cite{Lobatschewsky:1837} and later by Schl\"afli in connection with volume functions in hyperbolic space (for a far more comprehensive list of the early bibliography see Lewin's book \cite[pp.~349--353]{LewinBook}).
Over the last 40--50 years, seminal works on the dilogarithm, pioneered by Bloch \cite{Bloch_Irvine} in algebraic geometry and algebraic K-theory and by 't~Hooft and Veltman \cite{tHooft:1972tcz} in connection with quantum field theory, have led to a renaissance of interest in those functions and have triggered many new and often unexpected and surprisingly parallel developments, resulting in ``cross-fertilisation'' from which both mathematics (keyword ``mixed motives (over a field)'') and physics (keyword ``Feynman integrals'') communities have benefited.

The logarithm is a complex multi-valued function on $\mathbb{C}\setminus\{0\}$, and it can be defined on its principal branch $\mathbb{C}\setminus (-\infty,0]$ by the integral
\begin{gather*}
\log x := \int_{1}^x\frac{{\rm d}t}{t} , \qquad x\in \mathbb{C}\setminus (-\infty,0] .
\end{gather*}
The most prominent generalisations of the logarithm function are the so-called \emph{classical poly\-lo\-ga\-rithms}, defined for integers $n>0$ by
\begin{gather*}
\li_n(x) := \sum_{k=1}^\infty \frac{x^k}{k^n} .
\end{gather*}
The integer $n$ is called the \emph{weight}.
The series converges for $|x|<1$. It can be analytically continued to a multi-valued function on the whole complex plane via the integral representation
\begin{gather*}
\li_n(x) = \int_{0}^x\li_{n-1}(t) \frac{{\rm d}t}{t} ,\qquad n>1 ,
\end{gather*}
and the recursion starts with $\li_1(x)=-\log(1-x)$.
Classical polylogarithms are not rich enough to cover all the generalisations of the logarithm that appear in mathematics and physics. A~broader class of generalisations of the logarithm function are \emph{multiple polylogarithms} (MPL's) (also known as hyperlogarithms), which were first introduced in the works of Poincar\'e, Kummer and Lappo-Danilevsky~\cite{Kummer,Kummer+,Kummer++,Lappo:1927} and have recently reappeared in both mathematics~\cite{ChenSymbol,GoncharovMixedTate,Goncharov:1998kja} and physics~\cite{Ablinger:2011te,Gehrmann:2000zt,Remiddi:1999ew}. Multiple polylogarithms can be defined by the iterated integral
\begin{gather}\label{eq:G_def}
I(x_0;x_1,\ldots,x_n;x_{n+1}) := \int_{x_0<t_1<\cdots<t_n<x_{n+1}}\frac{{\rm d}t_1}{t_1-x_1}\wedge\cdots\wedge \frac{{\rm d}t_n}{t_n-x_n} ,
\end{gather}
where $x_i\in\mathbb{C}$. The integer $n$ is again called the weight, and the number of non-zero elements of $(x_1,\ldots,x_n)$ is called the {depth}. The notation
\begin{gather*}
 I_{m_1,\ldots,m_k}(x_1,\ldots,x_k) := I\big(0;x_1,\{0\}^{m_1-1},\ldots,x_{k},\{0\}^{m_k-1};1\big)
 \end{gather*}
 is often employed to write a depth $k$ integral, where $\{a\}^n$ denotes $a$ repeated $n$ times. The integral implicitly depends on the choice of a path going from $x_0$ to $x_{n+1}$, where the integration variables $x_i$ are considered to be ordered on the path. Depending on the values of the $x_i$, the integral in~\eqref{eq:G_def} may diverge and requires regularisation. This can be done by introducing suitable tangential base points, cf.~\cite[Chapter~15]{Deligne:1989}.
The class of functions defined by~\eqref{eq:G_def} contains the logarithm and classical polylogarithm functions as special cases, e.g., for generic values of $x_0$, $x_1$, $x_2$,
\begin{gather*}
I(x_0;x_1;x_2) = \log\bigg(\frac{x_1-x_2}{x_1-x_0}\bigg),\\
 I\big(0;1,\{0\}^{n};x_0\big) = -\li_{n+1}(x_0) .
\end{gather*}

The definition of MPL's in~\eqref{eq:G_def} implies that they satisfy the following basic relations common to all iterated integrals (cf., e.g.,~\cite{ChenSymbol}):
\begin{enumerate}\itemsep=0pt
\item \emph{Path reversal} (here we assume the same path for both, except that it is being traversed in opposite directions):
\begin{gather*}
I(x_{n+1};x_n,\ldots,x_1;x_0) = (-1)^n I(x_0;x_1,\ldots,x_n;x_{n+1}) .
\end{gather*}
\item \emph{Path composition} (for any $x\in \mathbb{C}$ and any path from $x_0$ to $x_{n+1}$ avoiding any $x_i$ ($1\leq i\leq n$)):
\begin{gather*}
I(x_0;x_1,\ldots,x_n;x_{n+1}) = \sum_{p=0}^nI(x_0;x_1,\ldots,x_p;x) I(x;x_{p+1},\ldots,x_n;x_{n+1}) .
\end{gather*}
\item \emph{Shuffle product} (in this equality we assume the same path for each iterated integral):
\begin{gather*}
I(x_0;x_{1},\ldots,x_m;x)I(x_0;x_{m+1},\ldots,x_{m+n};x)
 = \!\!\sum_{\sigma\in\Sigma(m,n)}\!I(x_0;x_{\sigma(1)},\ldots,x_{\sigma(m+n)};x) ,
\end{gather*}
where $\Sigma(m,n) = \big\{\sigma\in S_{m+n}\colon \sigma^{-1}(1)<\!\cdots\!<\sigma^{-1}(m)\text{ and } \sigma^{-1}(m+1)<\!\cdots\!<\sigma^{-1}(m+n)\big\}$ is the set of shuffles of $m$ and $n$ elements, and $S_{m+n}$ is the group of permutations on $m+n$ elements.
\end{enumerate}

\subsection{Identities among polylogarithms}
The identities at the end of the previous section are special, in the sense that they relate many different MPL's evaluated at the same arguments, albeit in a different order. More interesting are identities involving a single type of function evaluated at different arguments. The most famous identity involving dilogarithms is arguably the five-term relation due to Abel (cf., e.g.,~\cite[Chapter~1.5]{LewinBook}), a version of which where the order of the five arguments for $\li_2(z)$ defines a~5-cycle ($1-z_i = z_{i+2}z_{i-2}$, indices mod 5) {being} given by
\begin{gather}
\nonumber\li_2(x) + \li_2(y) +\li_2\bigg(\frac{1-x}{1-xy}\bigg) + \li_2(1-xy) +\li_2\bigg(\frac{1-y}{1-xy}\bigg)
\\ \qquad
{} =\zeta_2-\log x \log(1-x)-\log y \log(1-y)+\log\bigg(\frac{1-x}{1-xy}\bigg) \log\bigg(\frac{1-y}{1-xy}\bigg) ,
\label{eq:five-term}
\end{gather}
with $\zeta_n:=\li_n(1)$.
Since the logarithm and dilogarithm are multi-valued functions, it is important to specify the branches of the functions and the ranges for $x$ and $y$ for which this identity holds. It is straightforward to check that on the principal branches of the logarithm {(branch cut from $\-\infty$ to $0$)} and dilogarithm {(branch cut from 1 to $\infty$)} the identity in~\eqref{eq:five-term} holds whenever $|x|+|y|<1$. It has often been claimed in the literature, without explicit proof, as a kind of ``folklore'' statement, that every functional equation for $\li_2$ with arguments being rational functions in finitely many variables is a linear combination of this five-term relation. Wojtkowiak proved it for the 1-variable case \cite{Wojtkowiak:1996}, and for a recent proof of the general statement we refer to a recent preprint by de Jeu \cite{deJeu}.
%with $\zeta_n:=\li_n(1)$.

Seminal non-trivial identities involving logarithms and classical polylogarithms of higher weight have been found, e.g.,~by Kummer \cite{Kummer,Kummer+,Kummer++} (in two variables, up to weight 5), Goncharov (in three variables, weight 3) \cite{Goncharov:1995}, Wojtkowiak (in many variables, weight 3) \cite{Wojtkowiak:1991} and by Gangl (in two variables, up to weight~7 \cite{Gangl:1995, Gangl:2003}; in four variables, weight 4 \cite{Gangl_weight4}), as well as many others, and particularly interesting recent findings are given by Golden, Goncharov, Spradlin, Vergu and Volovich \cite{Golden:2013xva}, Charlton \cite{CharltonPhD}, Radchenko \cite{RadchenkoPhD} and Goncharov--Rudenko \cite{GoncharovRudenko}. No such results beyond weight 7 are currently known.
There are also families of (sometimes called ``tri\-vial'') identities in one variable known for all weights relating a specific classical polylogarithm at different arguments, and possibly products of logarithms, in particular the \emph{distribution relations}, valid as power series in the unit disk
\begin{gather}\label{eq:distribution}
\li_n(x^m) = m^{n-1}\sum_{k=0}^{m-1}\li_n\hskip -3pt \big(x\xi_m^k\big),\qquad |x|<1,\quad \xi_m^m=1 ,
\end{gather}
 and the \emph{inversion relations},
\begin{gather}\label{eq:inversion}
\li_{n}\bigg(\frac1x\bigg) = (-1)^{n+1} \li_n(x) -\frac{(-2\pi {\rm i})^n}{n!} B_n\bigg(\frac{1}{2}+\frac{\log(-x)}{2\pi {\rm i}}\bigg) ,
\end{gather}
where $x\in\mathbb{C}\setminus[0,\infty)$ and $B_n(\alpha)$ are the Bernoulli polynomials, defined by the generating series
\begin{gather}\label{eqn:bernoulli}
\frac{t {\rm e}^{\alpha t}}{{\rm e}^t-1} =\sum_{n=0}^\infty B_n(\alpha) \frac{t^n}{n!} .
\end{gather}

Much less is known about identities satisfied by MPL's of depth greater than one, { although the basic shuffle and stuffle relations were established by Goncharov in \cite{GoncharovMixedTate}, along with a gene\-ralisation of the distribution relations to any fixed MPL of depth greater than one, and an ``inversion-reversion'' relation \cite[Section~2.6, formulas (33) and (34)]{GoncharovMixedTate} valid on the unit $m$-torus.}
The study of { MPL identities of depth greater than one} has recently obtained new impetus from physics, where MPL's and their identities play an important role in the computation of scattering amplitudes in quantum field theory, cf., e.g.,~\cite{Ablinger:2018sat,Ablinger:2011te,Ablinger:2013cf,Duhr:2012fh,Duhr:2014woa,Duhr:2011zq,
Gehrmann:2001pz,Gehrmann:2000zt,Gehrmann:2001jv,
panzer_parity,Remiddi:1999ew,Vollinga:2004sn}. Goncharov \cite{Goncharov:2005sla} introduced the arguably most important invariant for multiple polylogarithms, its ``symbol''. Based on techniques to compute it, developed originally for quantum field theory calculations \cite{Spradlin:2011wp}, new functional identities for polylogarithms of different depth have been found,
for example: a 40-term trilogarithm identity whose arguments arise from a single cluster algebra is obtained in \cite{Golden:2013xva}; a new family of functional equations for $\li_4$ are given in \cite{ganglSome2000}, based on a~depth reduction in weight~4; various relations between weight 4 MPL's of any depths are given in \cite{Gangl_weight4}, including a reduction of a certain 5-term combination of $I_{3,1}$ to depth~1, from which a highly symmetric 4-variable $\li_4$ functional equation is obtained. Various relations between weight 5 MPL's of any depths are given in~\cite{CharltonPhD}, including a reduction of $I_{3,2}$ to $I_{4,1}$ and $\li_5$ terms, and an explicit inversion result relating $I_{a,b}(x,y)$ and $I_{a,b}\big(x^{-1},y^{-1}\big)$ for any depth 2 MPL. Concurrently an inversion result valid for {an} MPL of {arbitrary} depth was given in \cite{panzer_parity}, a~clean single-valued version of which (up to depth 3) we {provide} in Section~\ref{sec:examples}. Further reductions in weight~4 and~5, focusing on the so-called Grassmannian polylogarithm, are investigated in~\cite{Charlton:2019ilv}, whereas identities and reductions involving the so-called Nielsen polylogarithms in weights 5 through 8 are investigated in \cite{Charlton:2019gvp} (also using the clean single-valued version established in~Section~\ref{sec:nielsen} below).

\subsection{Clean single-valued polylogarithms and their identities}
As already mentioned, the multi-valuedness of MPL's implies that identities among them are to be understood as holding on appropriate branches. In order to circumvent this cumbersome issue, it is useful to replace {any} MPL's by a version {of it} that, while only real-analytic, has the virtue that it is single-valued. For example, the original single-valued version of the dilogarithm was given by Bloch and by Wigner\footnote{This is the mathematician David Wigner, as opposed to the arguably better known physicist Eugene (incidentally his father).} \cite{Bloch_Irvine} and generalised to polylogarithms (implicitly) by Ramakrishnan \cite{Ramakrishnan:1986} and (explicitly) by Wojtkowiak \cite{Wojtkowiak:1989} and Zagier \cite{Zagier:1991}. {The latter author proposed in fact several versions, the most standard one being} defined as\vspace{-1ex}
\begin{gather}\label{eq:P_def}
P_n(x) := \mathfrak{R}_n\Bigg\{\sum_{k=0}^{n-1}\frac{2^k B_k}{k!} \log^k|x| \li_{n-k}(x)\Bigg\} ,
\end{gather}
where\vspace{-1ex}
\begin{gather}\label{eq:Rn_def}
\mathfrak{R}_n = \begin{cases}
\Re ,& \text{if}\ n \text{ odd},
\\
\Im ,& \text{if}\ n \text{ even},
\end{cases}
\end{gather}
and Re and Im denote the real and imaginary parts respectively.
Moreover, $B_k:=B_k(0)$, are the Bernoulli numbers, defined as the constant terms of the Bernoulli polynomials defined above.
A~rather different single-valued version was given by Brown
\cite{BrownSVHPLs,brownSV}. We will relate the two explicitly in Section~\ref{sec:sv}.
The functions in equation~\eqref{eq:P_def} satisfy a ``product-free'' variant of the five-term relation in~\eqref{eq:five-term}\vspace{-1ex}
\begin{gather}\label{eq:five-term-P}
P_2(x) + P_2(y) + P_2\bigg(\frac{1-x}{1-xy}\bigg) + P_2(1-xy)+P_2\bigg(\frac{1-y}{1-xy}\bigg) =0 ,\vspace{-1ex}
\end{gather}
and of the distribution and inversion relations in~\eqref{eq:distribution} and~\eqref{eq:inversion},\vspace{-1ex}
\begin{gather}
\begin{split}
&P_n\big(x^m\big) = m^{n-1}\sum_{k=0}^{m-1}P_n\big(x\xi_m^k\big),
\\
& P_n\big(x^{-1}\big) = (-1)^{n+1} P_n(x) ,\qquad n>1 .
\end{split}\label{eq:dist-inv-P}
\end{gather}

\noindent
{\samepage
Since the functions $P_n(z)$ are single-valued, the identities in~\eqref{eq:dist-inv-P} are valid for all complex numbers $x\not=0$, while the five-term relation in~\eqref{eq:five-term-P} holds for $(x,y)\in\mathbb{C}^2\setminus L$, where $L$ is the union of curves defined by $x=0$, $x=1$, $y=0$, $y=1$ and $xy=1$.

}

The identities in~\eqref{eq:five-term-P} and~\eqref{eq:dist-inv-P} have an additional feature compared to their analogues in~\eqref{eq:five-term}, \eqref{eq:distribution} and~\eqref{eq:inversion}: they do not involve product terms of \mbox{(poly-)}logarithms of lower weights! We refer to an identity with this property, in line with standard terminology, e.g., \cite[Section~6]{Zagier:1991}, as a \emph{clean} identity. More generally, roughly stated for every identity involving classical polylogarithms of weight $\leq n$
we can obtain a clean identity by replacing $\li_n$ by $P_n$ and dropping all product terms. For the precise statement we refer to \cite[Propositions~2 and~3]{Zagier:1991}.
For MPL's of higher depths, however, in general no real-analytic analogues are known that satisfy clean versions of identities between the iterated integrals in~\eqref{eq:G_def}.
In the classical case, the product-freeness of relations permits one to mimic the functional behaviour via rather simple general (linear and multi-linear) algebraic tools, more precisely of quotients of free abelian groups like the so-called higher Bloch groups. For the latter groups the relations arise from taking only the non-product terms in a functional equation for $\li_n$, i.e., non-linear contributions are simply being ignored. In a similar way, one might hope that the clean functions give rise to ``simpler'' higher depth analogues of said Bloch groups, without the need to consider products of lower weight terms. One of the main results of this paper is to define such functions for all weights and depths. In the remainder of this section we summarise our main result.

It is possible to lift the iterated integrals $I(x_{0};x_{1},\ldots,x_{n};x_{n+1})$, for $x_i\in\overline{\mathbb{Q}}$, to motivic versions $\Imot(x_{0};x_{1},\ldots,x_{n};x_{n+1})$, which live in a ring of motivic periods $\cPmpl$ (see, e.g.,~\cite{brownmixedZ,Brown:2011ik,brownnotesmot, Goncharov:2005sla}). The ring $\cPmpl$ is graded by the weight of the MPL's, and we denote the subspace of weight $n$ by $\cPm_{\mathrm{MPL},n}$, and $\cPm_{\mathrm{MPL},>0}:= \bigoplus_{n>0}\cPm_{\mathrm{MPL},n}$. The iterated integrals in~\eqref{eq:G_def} can be retrieved from their motivic avatars through the period homomorphism $\per\colon\cPmpl\to \mathbb{C}$, which is conjectured to be injective~\cite{Grothendieck:1966}.
Therefore, it is expected that all relations among MPL's arise from relations among their motivic avatars. Within the motivic setting, we prove the following result in~Section~\ref{sec:clean}:\looseness=1
\begin{Theorem}\label{thm:main}
For every $\Imot(x_0;x_1,\ldots,x_n;x_{n+1})$ there is a real-analytic single-valued function $\Icsv(x_0;x_1,\ldots,x_n;x_{n+1})$ such that for every linear combination of motivic MPL's that can be reduced to products,
\begin{gather*}
\sum_{k=1}^Kc_k \Imot(x_{k,0};x_{k,1},\ldots,x_{k,n};x_{k,n+1}) \in \cPm_{\mathrm{MPL},>0}\cdot \cPm_{\mathrm{MPL},>0} ,\qquad c_k\in\mathbb{Q} ,
\end{gather*}
there is a clean identity {where products are mapped to zero in going from $\Imot$ to $\Icsv$, i.e., }
\begin{gather*}
\sum_{k=1}^Kc_k \Icsv(x_{k,0};x_{k,1},\ldots,x_{k,n};x_{k,n+1}) = 0 .
\end{gather*}
\end{Theorem}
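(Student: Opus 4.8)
\emph{Plan of proof.}
The plan is to obtain $\Icsv$ as the composite of two $\mathbb{Q}$-linear operations. First one applies Brown's single-valued realisation, which produces single-valued real-analytic functions while preserving the ring structure and the weight grading. Then one applies the Dynkin operator of a graded connected commutative Hopf algebra, whose crucial feature is that it annihilates every decomposable (product) element. The first step is responsible for single-valuedness, the second for cleanness.

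For the single-valued input I would recall from Section~\ref{sec:sv} (following Brown~\cite{brownSV}) the single-valued period homomorphism: a morphism of graded $\mathbb{Q}$-algebras $\svmot$ from $\cPmpl$ to the graded connected commutative Hopf algebra $H:=\cPdrpl$ of de~Rham periods (so that $H_0=\mathbb{Q}$), built from the coaction together with the real Frobenius, and having the property that $\svmot(\Imot(x_0;x_1,\ldots,x_n;x_{n+1}))$ realises, under the de~Rham period map $\per^{\dr}$, to Brown's single-valued multiple polylogarithm $\Isv(x_0;x_1,\ldots,x_n;x_{n+1})$, a real-analytic single-valued function of the $x_i$. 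The only features I use are that $\svmot$ is multiplicative and respects the weight grading, so that $\svmot(\cPm_{\mathrm{MPL},>0}\cdot\cPm_{\mathrm{MPL},>0})\subseteq H_{>0}\cdot H_{>0}$, and that $\per^{\dr}$ is a ring homomorphism into the algebra of real-analytic single-valued functions.

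Next I would bring in the Dynkin operator on $H$,
\begin{gather*}
D:=\mu\circ(S\otimes Y)\circ\Delta\colon\ H\longrightarrow H,
\end{gather*}
where $\mu,\Delta,S$ denote the product, coproduct and antipode, $u,\epsilon$ the unit and counit, and $Y$ the Euler operator with $Y|_{H_m}=m\cdot\id$. The classical input I need --- part of the theory of the Dynkin operator and the Eulerian idempotents on graded bialgebras (Dynkin--Specht--Wever, Patras, Reutenauer) --- is that on a graded connected \emph{commutative} Hopf algebra over a field of characteristic zero, $D$ annihilates decomposables, $D(H_{>0}\cdot H_{>0})=0$, whereas $\tfrac1m D|_{H_m}$ is idempotent with kernel exactly $H_{>0}\cdot H_{>0}\cap H_m$; thus $D$ is injective modulo products, which is why it is the canonical ``cleaning'' operator, although only the vanishing statement is used below. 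I would prove that vanishing directly: on $H_{>0}$ one has $D=Y\circ e_1$, with $e_1=\log^{\star}(\id)=\sum_{k\ge1}\tfrac{(-1)^{k-1}}{k}(\id-u\epsilon)^{\star k}$ the first Eulerian idempotent (a finite sum on each graded piece); using that for a commutative bialgebra $Y$ is simultaneously a derivation of the product and a coderivation of the coproduct and that $S$ is an algebra homomorphism, one verifies $e_1(ab)=e_1(a)\epsilon(b)+\epsilon(a)e_1(b)$, which forces $e_1$, hence $D$, to vanish on $H_{>0}\cdot H_{>0}$.

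Granting these two ingredients, the theorem is immediate. Define
\begin{gather*}
\Icsv(x_0;x_1,\ldots,x_n;x_{n+1}):=\bigl(\per^{\dr}\circ D\circ\svmot\bigr)\bigl(\Imot(x_0;x_1,\ldots,x_n;x_{n+1})\bigr).
\end{gather*}
Since $D$ is a fixed $\mathbb{Q}$-linear expression in $\Delta,S,Y,\mu$, the element $D(\svmot(\Imot(\cdot)))$ is a $\mathbb{Q}$-linear combination of products of de~Rham MPL's, and applying the ring homomorphism $\per^{\dr}$ exhibits $\Icsv$ as a polynomial with rational coefficients in Brown's single-valued functions $\Isv$; in particular it is a real-analytic single-valued function of the $x_i$ (these properties being preserved by the $\mathbb{Q}$-algebra operations), defined first for $x_i\in\overline{\mathbb{Q}}$ and then extended by real-analyticity. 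Now if $\sum_{k=1}^{K}c_k\,\Imot(x_{k,0};x_{k,1},\ldots,x_{k,n};x_{k,n+1})\in\cPm_{\mathrm{MPL},>0}\cdot\cPm_{\mathrm{MPL},>0}$, then the multiplicative weight-graded map $\svmot$ sends $\sum_k c_k\,\svmot(\Imot(x_{k,0};\ldots;x_{k,n+1}))$ into $H_{>0}\cdot H_{>0}$, which $D$ annihilates, and applying $\per^{\dr}$ gives $\sum_{k=1}^{K}c_k\,\Icsv(x_{k,0};x_{k,1},\ldots,x_{k,n};x_{k,n+1})=0$. Equivalently, the $\mathbb{Q}$-linear map $\Imot(\ldots)\mapsto\Icsv(\ldots)$ factors through $\cPmpl\big/(\cPm_{\mathrm{MPL},>0}\cdot\cPm_{\mathrm{MPL},>0})$, which is precisely the assertion that products are sent to zero. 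The delicate points are all inputs rather than this final deduction: the construction and properties of $\svmot$ on the whole comodule of motivic MPL's --- in particular handling the divergent iterated integrals via tangential base points and shuffle regularisation, which is the substance of Section~\ref{sec:sv} --- and the classical Hopf-algebraic fact above, for which one only needs that $\cPdrpl$ is commutative and connected in the grading used; I expect the former to be the main obstacle, the remaining steps being formal.
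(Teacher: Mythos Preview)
Your overall strategy---compose Brown's single-valued construction with the Dynkin operator on a graded connected commutative Hopf algebra, the former providing single-valuedness and the latter annihilating products---is exactly the paper's approach, and your argument that $D$ (equivalently $\Pi=Y^{-1}D$) kills $H_{>0}\cdot H_{>0}$ is correct and equivalent to the paper's Proposition~\ref{prop:ker} (proved there via Lemma~\ref{lem:inf_char} rather than through the Eulerian idempotent, but this is immaterial).

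The genuine problem is that you have misidentified the maps and their targets. In the paper's set-up, $\svmot:=\per\circ\sv\circ\pi^{\dr}$ is a ring homomorphism $\cPmpl\to\mathbb{C}$, not into $H=\cPdrpl$; the single-valued projection $\sv$ goes from $\cPdrpl$ to $\cPmpl$ (the opposite direction to what you assume); and the paper states explicitly that de~Rham periods do \emph{not} carry a period map---the composite $\per\circ\sv$ is precisely the substitute for your hypothetical ``$\per^{\dr}$''. Consequently the composite $\per^{\dr}\circ D\circ\svmot$ you write down is not well-formed: after $\svmot$ you are already in $\mathbb{C}$, where there is no Hopf structure on which to run the Dynkin operator. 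The fix is simply to reverse the order: first take the graded algebra map $\pi^{\dr}\colon\cPmpl\to\cPdrpl$ (this is the multiplicative weight-preserving map you actually need, and it sends $\cPm_{\mathrm{MPL},>0}\cdot\cPm_{\mathrm{MPL},>0}$ into $H_{>0}\cdot H_{>0}$), then apply $\Pi$ on the Hopf algebra $\cPdrpl$, and only afterwards pass to single-valued functions via $\per\circ\sv$. This is the paper's $\cR=\per\circ\sv\circ\Pi\circ\pi^{\dr}$; with the maps in this order your two-line deduction goes through verbatim. Two cosmetic points: the paper normalises by $Y^{-1}$ (using $\Pi$ rather than $D$) and post-composes with $\mathfrak{R}_n$ to define $\Icsv$; neither affects the vanishing-on-products claim.
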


The paper is structured as follows: In Section~\ref{sec:hopf} we review some basic facts about graded and connected Hopf algebras and the Dynkin operator. In Section~\ref{sec:motivic} we review the Hopf algebra on (de Rham) multiple polylogarithms, and we introduce the single-valued projection, which assigns to every multiple polylogarithm a real-analytic single-valued analogue. In Section~\ref{sec:clean} we define the clean version of single-valued multiple polylogarithms, and we discuss their basic properties. In particular, we show that they satisfy Theorem~\ref{thm:main}. In Section~\ref{sec:examples} we present some examples of clean single-valued MPL's in depths 1 and 2, and in Section~\ref{sec:nielsen} we explicitly compute the single-valued and clean single-valued versions of the Nielsen polylogarithm $S_{n,2}$. Finally in Section~\ref{sec:numerical} we derive some explicit numerical evaluations of depth 2 MPL's using this machinery and some known functional equations.

\section{Graded connected Hopf algebras and the Dynkin operator}
\label{sec:hopf}
This section reviews material from~\cite{Kreimer_Dynkin,Patras,Reutenauer,Reutenauer_Book}.
Let $H$ be a graded connected commutative Hopf algebra over $\mathbb{Q}$. The counit is simply the augmentation map $\epsilon\colon H\to H_0\simeq \mathbb{Q}$, and we have a splitting
\begin{gather*}
H = H_0 \oplus H_{>0},\qquad\text{with}\quad H_{>0} := \ker \epsilon .
\end{gather*}
The multiplication in $H$ is denoted by $m$ and the coproduct by $\Delta$. For $x\in H_{>0}$ it takes the form
\begin{gather*}
\Delta(x) = 1\otimes x+x\otimes 1 + \Delta'(x) , \qquad \Delta'(x) \in H_{>0}\otimes H_{>0} .
\end{gather*}
The antipode for $x\in H_{>0}$ is uniquely determined in a recursive way by
\begin{gather*}
0 = m(\id\otimes S)\Delta(x) = S(x) + x + m(\id\otimes S)\Delta'(x) .
\end{gather*}

\subsection{The convolution product}
Let $R$ be a unital $\mathbb{Q}$-algebra, with multiplication $m_R$ and unit $u_R\colon \mathbb{Q}\to R$. Let $\varphi,\psi\colon H\to R$ be $\mathbb{Q}$-linear maps. Their convolution is the $\mathbb{Q}$-linear map
\begin{gather*}
\varphi\star \psi := m_R(\varphi \otimes \psi) \Delta .
\end{gather*}
The co-associativity of $\Delta$ implies associativity of the convolution product. The set of all $\mathbb{Q}$-linear maps from $H$ to $R$ equipped with the convolution product forms a unital $\mathbb{Q}$-algebra, whose unit is $u_R\epsilon\colon H\to R$. Moreover, if $\varphi\colon H\to R$ is an algebra morphism, then it is invertible for the convolution product, and the inverse is simply composition with the antipode, $\varphi^{\star-1}:=\varphi S$. In~particular, the antipode is the inverse of the identity for the convolution product, $\id^{\star-1}=S$.

\begin{Definition} We say that a linear map $\varphi\colon H\to H$ is:
\begin{enumerate}\itemsep=-1pt
\item[$(1)$] a \emph{derivation}, if it satisfies $\varphi m = m(\varphi\otimes \id + \id\otimes \varphi)$,
\item[$(2)$] a \emph{co-derivation}, if it satisfies $\Delta\varphi = (\varphi\otimes \id + \id\otimes \varphi)\Delta$,
\item[$(3)$] an \emph{infinitesimal character}, if it satisfies $\varphi m = m(\varphi\otimes \epsilon + \epsilon\otimes \varphi)$.
\end{enumerate}
\end{Definition}

\begin{Lemma}\label{lem:inf_char}
Let $H$ be a graded, connected, commutative Hopf algebra and let $\varphi\colon H\to H$ be a~deri\-vation. Then $S\star \varphi$ is an infinitesimal character.
\end{Lemma}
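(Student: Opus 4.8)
The plan is to verify the infinitesimal character identity $(S\star\varphi)m = m\bigl((S\star\varphi)\otimes\epsilon + \epsilon\otimes(S\star\varphi)\bigr)$ directly, by unwinding the convolution product and exploiting that $\varphi$ is a derivation while $S$ (being composition with the antipode of an algebra morphism, namely $\id\circ S$) is an \emph{anti}-algebra morphism. Concretely, I would first record that $S\star\varphi = m_H(S\otimes\varphi)\Delta$, so that applied to a product $m(a\otimes b)$ we must understand $\Delta m = (m\otimes m)(\id\otimes\tau\otimes\id)(\Delta\otimes\Delta)$ (using that $H$ is a bialgebra, with $\tau$ the flip), the derivation rule $\varphi m = m(\varphi\otimes\id+\id\otimes\varphi)$, and the fact that $S$ is multiplicative up to the flip since $H$ is commutative, i.e.\ $S m = m(S\otimes S)$.

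Carrying this out: writing $\Delta a = \sum a_1\otimes a_2$ and $\Delta b=\sum b_1\otimes b_2$ in Sweedler notation, one computes
\begin{gather*}
(S\star\varphi)(ab) = \sum S(a_1 b_1)\,\varphi(a_2 b_2)
= \sum S(a_1)S(b_1)\bigl(\varphi(a_2)b_2 + a_2\varphi(b_2)\bigr) ,
\end{gather*}
where I used commutativity of $H$ to drop the flip in $Sm=m(S\otimes S)$ and the derivation property of $\varphi$. The first group of terms is $\sum \bigl(S(a_1)\varphi(a_2)\bigr)\bigl(S(b_1)b_2\bigr) = \bigl((S\star\varphi)(a)\bigr)\bigl((S\star\id)(b)\bigr)$, and the second is $\bigl((S\star\id)(a)\bigr)\bigl((S\star\varphi)(b)\bigr)$. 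Now the key simplification is $S\star\id = \id^{\star-1}\star\,\id = u_H\epsilon$, the convolution unit — this is exactly the antipode axiom recalled in the excerpt, $m(S\otimes\id)\Delta = u\epsilon$. Hence $(S\star\varphi)(ab) = (S\star\varphi)(a)\,\epsilon(b) + \epsilon(a)\,(S\star\varphi)(b)$, which is precisely the infinitesimal character condition $(S\star\varphi)m = m\bigl((S\star\varphi)\otimes\epsilon+\epsilon\otimes(S\star\varphi)\bigr)$.

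The main subtlety — the one place to be careful rather than any deep obstacle — is keeping track of the flip $\tau$ in both $\Delta m$ and $Sm$, and seeing that commutativity of $H$ makes these flips harmless so that the Sweedler indices pair up as $(a_1,a_2)$ with $(b_1,b_2)$ rather than crossing. One should also note that $\varphi$ need not be a coalgebra map, so no cancellation is assumed on that side; only the derivation property is used. A fully basis-free rendering, if preferred, replaces the Sweedler computation by the chain $(S\star\varphi)m = m_H(S\otimes\varphi)\Delta m = m_H(S\otimes\varphi)(m\otimes m)(\id\otimes\tau\otimes\id)(\Delta\otimes\Delta)$, then substitutes $Sm=m(S\otimes S)$ and $\varphi m = m(\varphi\otimes\id+\id\otimes\varphi)$, and finally recognises the two resulting summands as $m\bigl((S\star\varphi)\otimes(S\star\id)\bigr) + m\bigl((S\star\id)\otimes(S\star\varphi)\bigr)$ before invoking $S\star\id=u_H\epsilon$; I would present whichever of these two equivalent arguments reads more cleanly in context.
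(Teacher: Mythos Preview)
Your proposal is correct and takes essentially the same approach as the paper. The paper presents precisely your ``fully basis-free rendering'': it writes $(S\star\varphi)m = m(S\otimes\varphi)\Delta m$, expands $\Delta m$ via the bialgebra axiom with the flip $\tau$, substitutes $Sm=m(S\otimes S)$ (commutativity) and the derivation rule for $\varphi$, then commutes the flip past the linear maps to recognise the two summands as $m\bigl((S\star\varphi)\otimes\epsilon + \epsilon\otimes(S\star\varphi)\bigr)$ using $S\star\id=\epsilon$; your Sweedler-notation version is just the elementwise transcription of this same computation.
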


\begin{proof}
We denote by $\tau\colon H\otimes H\to H\otimes H$ the operator that swaps the factors in a tensor product, $\tau(a\otimes b) = b\otimes a$. We have
\begin{align*}
(S\star\varphi)m & = m(S\otimes \varphi)\Delta m= m((Sm)\otimes (\varphi m))(\id\otimes \tau\otimes \id)(\Delta\otimes\Delta)\\
& =m(m\otimes m) (S\otimes S\otimes \varphi\otimes \id + S\otimes S\otimes\id \otimes \varphi)(\id\otimes \tau\otimes \id)(\Delta\otimes\Delta)\\
& =m(m\otimes m) (\id\otimes \tau\otimes \id) (S\otimes \varphi \otimes S\otimes \id + S\otimes \id \otimes S\otimes \varphi)(\Delta\otimes\Delta)\\
& =m((S\star\varphi)\otimes \epsilon + \epsilon\otimes (S\star\varphi)) . \tag*{\qed}
\end{align*}
\renewcommand{\qed}{}
\end{proof}

\subsection{The grading operator and the Dynkin operator}
\label{sec:Dynkin}

On every graded connected Hopf algebra there is a natural \emph{grading operator} $Y\colon H\to H$ which acts on homogeneous elements by multiplication by the weight. It is both a derivation and a~co-derivation:
\begin{gather*}
Ym = m(Y\otimes \id+\id\otimes Y) ,
\\
\Delta Y = (Y\otimes \id+\id\otimes Y)\Delta .
\end{gather*}

{We now introduce the \emph{Dynkin operator} $D$ on a graded connected Hopf algebra. For the origin of the name, see~\cite{Reutenauer} and references therein.}
\begin{Definition}
The Dynkin operator on $H$ is defined by
\begin{gather*}
D := S\star Y .
\end{gather*}
\end{Definition}
Since $\id \star S= \epsilon$, we can write the previous equation in the equivalent form
\begin{gather}\label{eq:id_S_D}
\id \star D = Y .
\end{gather}
Since $Y$ is a derivation, $D$ is an infinitesimal character by Lemma~\ref{lem:inf_char}.
It is convenient to define the operator $\Pi$ which is the identity on $H_0$ and $\Pi = Y^{-1} D$ on $H_{>0}$.

\begin{Proposition}\label{prop:ker}$\phantom{A}$
\begin{enumerate}\itemsep=0pt
\item[$1.$] The kernel of $\Pi$ is generated by all non-trivial products, $\ker \Pi = H_{>0}\cdot H_{>0}$.
\item[$2.$] $\Pi$ is a projector, $\Pi^2=\Pi$.
\end{enumerate}
\end{Proposition}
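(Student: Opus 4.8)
The plan is to deduce both parts from one observation: on $H_{>0}$ the operator $\Pi$ coincides with the identity modulo decomposables. Everything else is then formal bookkeeping with the identity $\id\star D = Y$ from~\eqref{eq:id_S_D} and the fact (Lemma~\ref{lem:inf_char} applied to the derivation $Y$, as noted above) that $D=S\star Y$ is an infinitesimal character.

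To obtain the observation I would take $x\in H_{>0}$, write $\Delta(x)=1\otimes x+x\otimes 1+\Delta'(x)$ with $\Delta'(x)\in H_{>0}\otimes H_{>0}$, and expand
\begin{gather*}
Y(x)=(\id\star D)(x)=m(\id\otimes D)\Delta(x)=D(x)+x\,D(1)+m(\id\otimes D)\Delta'(x).
\end{gather*}
Here $D(1)=0$, and $D=m(S\otimes Y)\Delta$ is a composite of grading-preserving maps, so $D$ maps $H_{>0}$ into $H_{>0}$ and therefore $m(\id\otimes D)\Delta'(x)\in H_{>0}\cdot H_{>0}$; hence $D(x)\equiv Y(x)\pmod{H_{>0}\cdot H_{>0}}$. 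Since $Y$ restricts to an invertible grading-preserving operator on $H_{>0}$ and $H_{>0}\cdot H_{>0}$ is a graded subspace, $Y^{-1}$ preserves $H_{>0}\cdot H_{>0}$; applying $Y^{-1}$ gives $x-\Pi(x)\in H_{>0}\cdot H_{>0}$ for every $x\in H_{>0}$, call this $(\ast)$.

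For part~1, the inclusion $H_{>0}\cdot H_{>0}\subseteq\ker\Pi$ follows because the infinitesimal-character property gives $D(ab)=D(a)\epsilon(b)+\epsilon(a)D(b)=0$ for $a,b\in H_{>0}$, so $\Pi=Y^{-1}D$ annihilates $H_{>0}\cdot H_{>0}$. For the reverse inclusion, if $\Pi(x)=0$ I decompose $x=x_0+x_+$ along $H=H_0\oplus H_{>0}$; then $x_0=\Pi(x_0)$ lies in $H_0$ while $\Pi(x_+)\in H_{>0}$, forcing $x_0=0$, so $x=x_+\in H_{>0}$ and then $(\ast)$ yields $x=x-\Pi(x)\in H_{>0}\cdot H_{>0}$. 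For part~2, $\Pi$ is the identity on $H_0$, and for $x\in H_{>0}$ the element $x-\Pi(x)$ lies in $H_{>0}\cdot H_{>0}=\ker\Pi$, whence $\Pi(x)=\Pi^2(x)$; since $H=H_0\oplus H_{>0}$, this gives $\Pi^2=\Pi$.

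The only non-formal point is the reverse inclusion $\ker\Pi\subseteq H_{>0}\cdot H_{>0}$, which is exactly where $(\ast)$ does the work; the rest is routine, the main care being to check that $Y^{-1}$ respects the ideal $H_{>0}\cdot H_{>0}$ and that $D$ contributes nothing in degree~$0$. I do not expect a substantial obstacle: the whole proof is a short manipulation of $\id\star D=Y$ together with $D$ being an infinitesimal character.
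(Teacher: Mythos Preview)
Your proof is correct and follows essentially the same approach as the paper: both arguments hinge on the infinitesimal-character property of $D$ for the inclusion $H_{>0}\cdot H_{>0}\subseteq\ker\Pi$, and on expanding the convolution to obtain $x-\Pi(x)\in H_{>0}\cdot H_{>0}$ for the reverse inclusion and the projector property. The only cosmetic difference is that you expand $Y=\id\star D$ whereas the paper expands $D=S\star Y$ directly, and you package the key computation as the observation $(\ast)$ up front rather than invoking it separately in each part; your treatment of the degree-zero component in the reverse inclusion is also slightly more explicit than the paper's.
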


\begin{proof}
(1) Since $D$ is an infinitesimal character, we have for all $x,y\in H_{>0} = \ker \epsilon$,
\begin{gather*}
D(x\cdot y) = D(x)\cdot\epsilon(y) + \epsilon(x)\cdot D(y) = 0 ,
\end{gather*}
and so $\Pi(x\cdot y)=0$. Hence $H_{>0}\cdot H_{>0}\subset \ker \Pi$.
Conversely, let $x\in \ker \Pi$. We can assume without loss of generality that $x\in H_n$, $n>1$. Again writing the
coproduct as $\Delta(x) = 1\otimes x + x\otimes 1 + \Delta'(x)$, we find
\begin{gather*}
0 = \Pi(x) = x + \frac{1}{n} m(S\otimes Y)\Delta'(x) ,
\end{gather*}
and so
\begin{gather*}
x = -\frac{1}{n} m(S\otimes Y)\Delta'(x)\in H_{>0}\cdot H_{>0} .
\end{gather*}

(2) If $x\in H_n$, $n>0$, we have
\begin{gather*}
\Pi^2(x) = \frac{1}{n} \Pi\left[n x + m(S\otimes Y)\Delta'(x)\right] = \Pi(x) . \tag*{\qed}
\end{gather*}
\renewcommand{\qed}{}
\end{proof}

\section{Review of motivic polylogarithms}
\label{sec:motivic}

\subsection{Motivic and de Rham periods}

In~\cite{brownmixedZ,Brown:2011ik,brownnotesmot}, Brown has shown how to lift the iterated integrals $I(x_{0};x_{1},\ldots,x_{n};x_{n+1})$, for \mbox{$x_i\in\overline{\mathbb{Q}}$}, to motivic versions $\Imot(x_{0};x_{1},\ldots,x_{n};x_{n+1})$. The motivic MPL's generate a subring $\cPmpl$ inside the ring of all motivic periods $\cPm$.\footnote{Strictly speaking, $\Imot(x_{0};x_{1},\ldots,x_{n};x_{n+1})$ defines a \emph{family} of motivic periods depending on the variables $x_i$, see~\cite[Section~7]{brownnotesmot}. Since no confusion arises, we will always simply refer to $\Imot(x_{0};x_{1},\ldots,x_{n};x_{n+1})$ as a motivic period.} {A detailed review of the definition and construction of motivic and de Rham MPLs would go beyond the scope of this paper.
We refer, e.g., to~\cite[Section~10.6]{brownnotesmot}, for the example of the classical polylogarithms.} There is a natural homomorphism, called the \emph{period map}, $\per\colon \cPm\to \mathbb{C}$ such that
\begin{gather*}
\per\big(\Imot(x_{0};x_{1},\ldots,x_{n};x_{n+1})\big) = I(x_{0};x_{1},\ldots,x_{n};x_{n+1}) .
\end{gather*}
It follows from Grothendieck's period conjecture that $\per$ is expected to be injective.

The motivic MPL's are equipped with additional structure with respect to their non-motivic counterparts. In particular, they are equipped with a coaction
\begin{gather*}
\Delta\colon\ \cPmpl\to \cPmpl\otimes \cPdrpl ,
\end{gather*}
given on motivic MPL's via the formula~\cite{brownmixedZ,Brown:2011ik}
\begin{gather}
\Delta\big(\Imot(x_{0};x_{1},\ldots,x_{n};x_{n+1})\big) \nonumber
\\ \qquad
{}=\!\!\sum_{\substack{0=i_0<i_1<\cdots\\<i_k<i_{k+1}=n+1}}
\!\!\Imot(x_{0};x_{i_1},\ldots,x_{i_k};x_{n+1})\otimes \prod_{p=0}^k\Idr(x_{i_p};x_{i_p+1},\ldots,x_{i_{p+1}-1};x_{i_{p+1}}) .
\label{eq:coaction}
\end{gather}
The quantities in the second factor of the tensor product are de Rham MPL's and those span the ring $\cPdrpl$, which can be thought of as the quotient of $\cPmpl$ by the ideal generated by $(2\pi {\rm i})^{\mathfrak{m}}$ (the motivic lift of $2\pi {\rm i}$). There is a natural projection (see, e.g., \cite[Section~4.3]{brownnotesmot}):
\begin{gather*}
\pi^{\mathfrak{dr}}\big(\Imot(x_{0};x_{1},\ldots,x_{n};x_{n+1})\big) = \Idr(x_{0};x_{1},\ldots,x_{n};x_{n+1}) .
\end{gather*}
 $\cPdrpl$ is a commutative connected Hopf algebra graded by the weight (where the weight of $\Idr(x_{0};x_{1},\ldots,x_{n};x_{n+1})$ is defined as $n$). The coproduct on $\cPdrpl$ is given by the same formula as in~\eqref{eq:coaction}, with $\Imot$ replaced by $\Idr$ everywhere~\cite{brownmixedZ,Brown:2011ik,brownnotesmot, Goncharov:2005sla},\footnote{The ``motivic'' MPL's defined by Goncharov in~\cite{Goncharov:2005sla} correspond to the de Rham MPL's defined by Brown in~\cite{brownmixedZ,Brown:2011ik,brownnotesmot}. Here we consistently follow Brown's nomenclature.} and we use the same symbol to denote the coaction on $\cPmpl$ and the coproduct on $\cPdrpl$. Since $\cPdrpl$ is graded and connected, the antipode $S$ is uniquely determined by the coproduct.

\subsection{Single-valued projection}
\label{sec:sv}
Unlike motivic MPL's, to which the period map assigns a (complex) number, de Rham MPL's do not allow for an analogous construction. Instead, they can be equipped with a ring homomorphism $\sv\colon \cPdrpl\to\cPmpl$, called the \emph{single-valued projection} (cf.~\cite[Section~8.3]{brownnotesmot}, and also~\cite{BrownSVHPLs,brownSV}). The single-valued projection can be given explicitly in a combinatorial way on $\cPdrpl$~\cite{Brown:2013gia, brownnotesmot} (see also~\cite[Section~3.4]{DelDuca:2016lad}),
\begin{gather*}
\sv := m(F_{\infty}\Sigma\otimes \id)\Deltatilde ,
\end{gather*}
where $m$ is the multiplication in $\cPmpl$, $F_{\infty}\colon \cPmpl\to\cPmpl$ is the real Frobenius, which can be thought of as complex conjugation (i.e., $\per F_{\infty} = \overline{\per}$, where $\overline{\per}(x)$ denotes the complex conjugate of $\per(x)$) and $\Sigma\colon \cPmpl\to\cPmpl$ is defined by
\begin{gather*}
\Sigma\big(\Imot(x_{0};x_{1},\ldots,x_{n};x_{n+1})\big) := (-1)^n \Stilde\big(\Imot(x_{0};x_{1},\ldots,x_{n};x_{n+1})\big) .
\end{gather*}
Here $\Deltatilde\colon \cPdrpl\to \cPmpl\otimes\cPmpl$ and $\Stilde\colon \cPmpl\to\cPmpl$ are given by the same formulas as the coproduct $\Delta$ and the antipode $S$ on $\cPdrpl$, with the replacement $\Idr\to\Imot$ everywhere.

The single-valued projection associates to every $\Idr(x_{0};x_{1},\ldots,x_{n};x_{n+1})$ a (family of) motivic periods, whose image under the period map defines a single-valued function of the $x_i$. We can compose the single-valued projection with the period map and the projection $\pi^{\dr}$ to associate to every motivic MPL its single-valued version:
\begin{gather*}
\svmot:=\per\circ \sv \circ\pi^{\mathfrak{dr}} \colon\ \cPmpl\to \mathbb{C} .
\end{gather*}

\begin{Example}[single-valued version of the motivic logarithm]
We can apply the previous construction to the motivic logarithm
\begin{gather*}
\log^{\mot}x := \Imot(0;0;x) ,\qquad x\in \overline{\mathbb{Q}}\setminus\{0\} .
\end{gather*}
We have, with $\log^{\dr}x = \pi^{\dr}(\log^{\mot}x)$,
\begin{gather*}
\begin{split}
&\Delta\big(\log^{\dr}x\big) = \log^{\dr}x \otimes 1 + 1\otimes \log^{\dr}x,\\
&S\big(\log^{\dr}x\big) = -\log^{\dr}x .
\end{split}
\end{gather*}
The single-valued version attached to $\log^{\mot}x$ is therefore
\begin{gather*}
\svmot(\log^{\mot}x) = \log\overline{x} + \log x = \log|x|^2 .
\end{gather*}
In particular, letting $x=-1$, we see that the single-valued version attached to $(\pi {\rm i})^{\mot}$ is zero, i.e.,
\begin{gather*}
\svmot\big((\pi {\rm i})^{\mot}\big) = 0 .
\end{gather*}
\end{Example}

\begin{Example}[single-valued version of the classical motivic polylogarithm]
The motivic lift of the classical polylogarithm of weight $n$ is
\begin{gather*}
\li^{\mot}_n(x) := -\Imot\big(0;1,\{0\}^{n-1};x\big) , \qquad x\in\overline{\mathbb{Q}} .
\end{gather*}
The coproduct and the antipode of $\li^{\dr}_n(x) := \pi^{\dr}\big(\li^{\mot}_n(x)\big)$ are
\begin{gather}
\begin{split}
& \Delta\big(\li^{\dr}_n(x)\big) = \li^{\dr}_n(x)\otimes 1 + 1\otimes \li^{\dr}_n(x) + \sum_{k=1}^{n-1}\li^{\dr}_{n-k}(x)\otimes\frac{\log^{\dr}(x)^k}{k!},\\
& S\big(\li^{\dr}_n(x)\big) = -\li^{\dr}_n(x) - \sum_{k=1}^{n-1}\frac{(-\log^{\dr} x)^k}{k!}\li^{\dr}_{n-k}(x) .
\end{split}
\label{eq:Delta_Li}
\end{gather}
The single-valued version attached to the classical motivic polylogarithm of weight $n$ is then
\begin{gather}\label{eq:svLi}
\svmot\big(\li^{\mot}_n(x)\big)= \li_n(x) - (-1)^n\sum_{k=0}^{n-1}\frac{(-\log|x|^2)^k}{k!} \li_{n-k}(\overline{x}) .
\end{gather}
Letting $x=1$ in~\eqref{eq:svLi}, we obtain the single-valued version associated to the motivic zeta values, $\zeta_n^{\mot} := \li_n^{\mot}(1)$, $n>1$~\cite{Brown:2013gia}:
\begin{gather}\label{eq:sv_zeta}
\svmot(\zeta_{n}^{\mot}) = \svmot\big(\li_{n}^{\mot}(1)\big) =
\begin{cases}
2 \zeta_{2m+1}, & n\ \text{odd} ,
\\
0, & n\ \text{even} .
\end{cases}
\end{gather}

These functions are closely related, but not identical, to Zagier's single-valued version of the classical polylogarithms from~\eqref{eq:P_def}. The relationship is most conveniently expressed in terms of the function
\begin{gather*}
\cP_n(x) := \sum_{k=0}^{n-1}\frac{B_k}{k!} \log^k|x|^2 \big(\li_{n-k}(x) -(-1)^n\li_{n-k}(\overline{x})\big)
= \begin{cases}
2 P_n(x)&\text{if $n$ odd} ,
\\
2{\rm i} P_n(x) &\text{if $n$ even} .
\end{cases}
\end{gather*}

\begin{Proposition}\label{prop:svLi}
For $n>0$ and $x\in\mathbb{C}\setminus\{0\}$, we have
\begin{gather*}
\svmot\big(\li_n^{\mot}(x)\big) = \sum_{k=0}^{n-1}\frac{\log^{k}|x|^2}{(k+1)!} \cP_{n-k}(x) .
 \end{gather*}
\end{Proposition}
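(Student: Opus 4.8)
The plan is to prove the identity by a direct algebraic comparison, feeding in the explicit formula \eqref{eq:svLi} for $\svmot(\li_n^{\mot}(x))$ and the definition of $\cP_m(x)$ stated just above. Write $L:=\log|x|^2$ throughout. Substituting the definition of $\cP_{n-k}(x)$ into the right-hand side of the Proposition and interchanging the order of summation, I would rewrite it as
\[
\sum_{\substack{m+k+j=n\\ m\ge 1,\ k,j\ge 0}} \frac{L^{k+j}}{(k+1)!}\,\frac{B_j}{j!}\bigl(\li_m(x)-(-1)^{n-k}\li_m(\bar x)\bigr),
\]
which is a $\mathbb{C}$-linear combination of the monomials $L^{s}\li_m(x)$ and $L^{s}\li_m(\bar x)$ with $m\ge1$ and $m+s=n$; the left-hand side \eqref{eq:svLi} is visibly of the same shape. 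It therefore suffices to prove the equality formally, treating the symbols $\li_m(x)$, $\li_m(\bar x)$ ($m\ge 1$) and $L$ as independent indeterminates, i.e.\ to match the coefficient of each $L^{s}\li_m(x)$ and each $L^{s}\li_m(\bar x)$.

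For the coefficient of $L^{s}\li_m(x)$ (so $s=n-m\ge0$): the left-hand side contributes only the summand $\li_n(x)$, i.e.\ $1$ when $s=0$ and $0$ otherwise, whereas the right-hand side contributes $\sum_{k+j=s}\frac{1}{(k+1)!}\frac{B_j}{j!}$. So this case reduces to the identity $\sum_{k+j=s}\frac{1}{(k+1)!}\frac{B_j}{j!}=\delta_{s,0}$, which is the coefficient of $t^{s}$ in $\bigl(\frac{e^{t}-1}{t}\bigr)\bigl(\frac{t}{e^{t}-1}\bigr)=1$, using $\sum_{k\ge0}\frac{t^{k}}{(k+1)!}=\frac{e^{t}-1}{t}$ together with the generating series \eqref{eqn:bernoulli} for the Bernoulli numbers $B_j=B_j(0)$.

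For the coefficient of $L^{s}\li_m(\bar x)$: since $k+j=s=n-m$ one has $(-1)^{n-k}=(-1)^{m+j}=(-1)^{m}(-1)^{j}$, and (using $(-1)^{n}(-1)^{n-m}=(-1)^{m}$) the contribution of \eqref{eq:svLi} is $-(-1)^{m}\frac{L^{s}}{s!}$, while that of the right-hand side is $-(-1)^{m}L^{s}\sum_{k+j=s}\frac{(-1)^{j}}{(k+1)!}\frac{B_j}{j!}$. After stripping the common factor $-(-1)^{m}L^{s}$ this case reduces to $\sum_{k+j=s}\frac{(-1)^{j}}{(k+1)!}\frac{B_j}{j!}=\frac{1}{s!}$, which is the coefficient of $t^{s}$ in $\bigl(\frac{e^{t}-1}{t}\bigr)\bigl(\frac{t}{1-e^{-t}}\bigr)=e^{t}$, where $\frac{t}{1-e^{-t}}=\sum_{j\ge0}\frac{(-1)^{j}B_j}{j!}t^{j}$ by \eqref{eqn:bernoulli}. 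Matching these two coefficients for all admissible $m$ and $s$ yields the claimed equality.

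The computation is entirely elementary; the only mild subtlety, rather than a genuine obstacle, is the bookkeeping of the sign $(-1)^{n-k}$ coming from the definition of $\cP_{n-k}$ and its re-expression in terms of $m$ and $j$, after which everything collapses to the two generating-function identities above. Alternatively one could package the whole argument into a single generating-function identity in a formal variable tracking the power of $L$, but separating the $\li_m(x)$ and $\li_m(\bar x)$ parts as above appears the most transparent.
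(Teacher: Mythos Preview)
Your proof is correct and follows essentially the same route as the paper's: expand the right-hand side, reindex so that the coefficients of $L^{s}\li_m(x)$ and $L^{s}\li_m(\bar x)$ are isolated, and then evaluate the two resulting Bernoulli convolution sums. The only difference is tactical: where the paper evaluates these sums via the identities $B_{\alpha+1}(1)=(-1)^{\alpha+1}B_{\alpha+1}$ and $B_{\alpha+1}(-1)=B_{\alpha+1}+(\alpha+1)(-1)^{\alpha+1}$ (with a small case split on parity), you package both as the generating-function identities $\frac{e^{t}-1}{t}\cdot\frac{t}{e^{t}-1}=1$ and $\frac{e^{t}-1}{t}\cdot\frac{t}{1-e^{-t}}=e^{t}$, which is arguably slicker and avoids the case analysis.
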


\begin{proof}We directly compute the right hand side, and show that it gives the expression for $ \svmot(\li^\mot_n(x))$ from~(\ref{eq:svLi}). We have
\begin{gather*}
 \sum_{k=0}^{n-1}\frac{\log^{k}|x|^2}{(k+1)!} \cP_{n-k}(x) =
 \sum_{k=0}^{n-1} \sum_{\ell=0}^{n-k-1} \frac{B_\ell}{\ell!} \frac{\log^{k+\ell}{|x|^2}}{(k+1)!} \big( \li_{n-k-\ell}(x) - (-1)^{n-k} \li_{n-k-\ell}(\overline{x}) \big) .
\end{gather*}
By reindexing the sum with $\alpha = k + \ell$, we find it is equal to
\begin{gather}
{} = \sum_{\alpha=0}^{n-1} \sum_{\ell=0}^\alpha \frac{B_\ell}{\ell!} \frac{\log^\alpha{|x|^2}}{(\alpha + 1 - l)!} \big( \li_{n - \alpha}(x) - (-1)^{n+\alpha-l} \li_{n-\alpha}(\overline{x}) \big) \nonumber
 \\
\begin{split}
&{} = \sum_{\alpha=0}^{n-1} \Bigg( \sum_{\ell=0}^\alpha \frac{B_\ell}{\ell!} \frac{1}{(\alpha + 1 - \ell)!} \Bigg) \li_{n - \alpha}(x) \log^\alpha{|x|^2}
 \\
 & \hphantom{=}
{} - (-1)^n \sum_{\alpha=0}^{n-1} \Bigg( \sum_{\ell=0}^\alpha (-1)^{\ell} \frac{B_\ell}{\ell!} \frac{1}{(\alpha + 1 - \ell)!} \Bigg) (-\log{|x|^2})^\alpha \li_{n-\alpha}(\overline{x}) .\end{split} \label{eqn:svli_to_p_sum}
\end{gather}
We notice
\begin{align*}
 \sum_{\ell=0}^\alpha \frac{B_\ell}{\ell!} \frac{1}{(\alpha + 1 - \ell)!}
 & = \frac{1}{(\alpha+1)!} ( B_{\alpha+1}(1) - B_{\alpha+1} ) \\
 & = \frac{1}{(\alpha+1)!} ( (-1)^{\alpha+1} B_{\alpha+1} - B_{\alpha+1} ) \\
 & = \begin{cases}
 1, & \alpha = 0, \\
 0, & \alpha \neq 0 ,
 \end{cases}
\end{align*}
where we have used the symmetry $B_{\alpha+1}(1-x) = (-1)^{\alpha+1} B_{\alpha+1}(x)$, to find that $B_{\alpha+1}(1) = (-1)^{\alpha+1} B_{\alpha+1}$. The second case above follows by combining the odd $\alpha > 0$ case where the terms cancel, and the even $\alpha > 0$ case where the terms are identically zero. Likewise
\begin{gather*}
 \sum_{\ell=0}^\alpha \frac{B_\ell}{\ell!} \frac{(-1)^\ell}{(\alpha + 1 - \ell)!}
 = \frac{(-1)^{\alpha+1}}{(\alpha+1)!} (B_{\alpha+1}(-1) - B_{\alpha+1}) .
\end{gather*}
Now using the symmetry and multiplication theorems for Bernoulli polynomials, we have
\begin{gather*}
B_{\alpha+1}(-x) = (-1)^{\alpha+1} B_{\alpha+1}(x) + (\alpha+1) (-1)^{\alpha+1} x^{\alpha+1-1},
\end{gather*}
so
\begin{gather*}
B_{\alpha+1}(-1) = (-1)^{\alpha+1} B_{\alpha+1}(1) + (\alpha+1) (-1)^{\alpha+1} = B_{\alpha+1} + (\alpha+1) (-1)^{\alpha+1}.
\end{gather*}
So this sum is equal to
\begin{gather*}
 \frac{(-1)^{\alpha+1}}{(\alpha+1)!} \big( B_{\alpha+1} + (\alpha+1) (-1)^{\alpha+1} - B_{\alpha+1} \big) = \frac{(\alpha+1)}{(\alpha+1)!} = \frac{1}{\alpha!} .
\end{gather*}
Inserting these evaluations into \eqref{eqn:svli_to_p_sum} shows that it is equal to
\begin{gather*}
 = \li_{n}(x) - (-1)^n \sum_{\alpha=0}^{n-1} \frac{(-\log{|x|^2})^\alpha}{\alpha!} \li_{n-\alpha}(\overline{x}) = \svmot(\li_n^\mot(x)) ,
\end{gather*}
as claimed.
\end{proof}

\begin{Remark}
An alternative construction of single-valued analogues of MPL's was presented in \cite{Zhao}. Neither of the single-valued versions from \cite{brownnotesmot} or \cite{Zhao} satisfy exclusively clean functional equations. For \cite{brownnotesmot} this follows from the functoriality of the construction, for example: applying the single-valued map to the functional equation
\begin{gather*}
 \li^\mot_2(x) + \li^\mot_2(1-x) = - \frac{1}{2} \big(\log^\mot(-x) \big)^2 + \zeta^\mot(2)
\end{gather*}
produces the following identity between single-valued functions
\begin{gather*}
 \sv^\mot\big( \li^\mot_2(x)\big) + \sv^{\mot}\big(\li^\mot_2(1-x)\big) = - \frac{1}{2} \big(\log^\mot|x|^2 \big)^2 ,
\end{gather*} which still retains a product term on the right hand side. For~\cite{Zhao}, see the explicit example Section~2.9.3 in loc. cit.
\end{Remark}
\end{Example}

\section{Clean single-valued polylogarithms}\label{sec:clean}

Throughout this section (and the following) all MPL's with non-generic arguments are understood to be regularised by introducing suitable tangential base-points, cf.\ the comment in Section~\ref{sec:definitions} and~\cite[Chapter~15]{Deligne:1989}.

\subsection{Definition}%\label{sec:clean}
We can apply the construction of the Dynkin operator from Section~\ref{sec:Dynkin} to the commutative graded connected Hopf algebra $\cPdrpl$. We can compose the projector $\Pi$ with the projection~$\pi^{\dr}$, the single-valued projection $\sv$ and the period map to obtain an algebra morphism \mbox{$\cR \colon\cPmpl\to \mathbb{C}$}:
\begin{gather*}
\cR := \per\circ \sv\circ \Pi\circ \pi^{\dr} ,
\end{gather*}
where $\Pi = Y^{-1}D$ acts as defined in Section~\ref{sec:Dynkin}.

\begin{Definition}
\label{def:Icsv}
The \emph{clean single-valued multiple polylogarithms $\Icsv$} are defined by
\begin{gather*}
\Icsv(x_0;x_1,\ldots,x_n;x_{n+1}) := \mathfrak{R}_n\big[\cR(\Imot(x_0;x_1,\ldots,x_n;x_{n+1}))\big] ,
\end{gather*}
where $\mathfrak{R}_n$ is defined in~\eqref{eq:Rn_def}.
\end{Definition}

Theorem~\ref{thm:main} follows immediately, from the definition of the clean single-valued {multiple} polylogarithms and the properties of $\cR$. Indeed, since the latter lie in the image of $\per\circ \sv$, they are both real-analytic and single-valued functions. Moreover, let
\begin{gather*}
A := \sum_{k=1}^Kc_k \Imot(x_{k,0};x_{k,1},\ldots,x_{k,n};x_{k,n+1})\in \cPm_{\mathrm{MPL},>0}\cdot\cPm_{\mathrm{MPL},>0} .
\end{gather*}
Proposition~\ref{prop:ker} implies that $\cPm_{\mathrm{MPL},>0}\cdot\cPm_{\mathrm{MPL},>0}\subseteq \ker \cR$, and so
\begin{gather*}
0 = \mathfrak{R}_n\left[\cR(A)\right] = \sum_{k=1}^Kc_k \Icsv(x_{k,0};x_{k,1},\ldots,x_{k,n};x_{k,n+1}) ,
\end{gather*}
where $\mathfrak{R}_n$ was defined above after \eqref{eq:P_def}.

\begin{Remark}
It is possible to use Theorem~\ref{thm:main} to obtain identities among (non-clean) single-valued polylogarithms. Indeed, it is often easier to find identities modulo product terms, e.g., by starting from identities that hold modulo shuffle products at the symbol level (cf.~\cite{CharltonPhD,Charlton:2019gvp}). The combinatorics involved in $\cR$ will restore all the product terms necessary to obtain a numerical identity between single-valued polylogarithms, up to a single constant of integration. In~some cases this may even give hints for valid identities among the non-single-valued analogues, e.g., by dropping all terms depending on the complex-conjugated variables, and accounting for factors of 2 introduced by the single-valued map on real constants (e.g.,~\eqref{eq:sv_zeta}). Conversely, the combinatorics involved in $\cR$ can be applied directly to the symbol Hopf algebra to restore the (functional) product terms in a modulo products identity between functions (of holomorphic variables) at the symbol level; one can then study product terms involving constants iteratively via slices of the coaction.
\end{Remark}

\begin{Remark}The restriction to the real (resp.\ imaginary) part for odd (resp.\ even) weights in Definition~\ref{def:Icsv} can be motivated by the fact that the other parity can be expressed entirely in terms of products of lower weights functions. To see this, we start from the following property of the single-valued projection on de Rham MPL's (cf., e.g.,~\cite{DelDuca:2016lad}):
\end{Remark}
\begin{Proposition}\label{prop:Fsv}
For every $x\in\cPdr_{\mathrm{MPL},n}$, we have
\begin{gather*}
F_{\infty}\sv(x) = (-1)^n\sv S(x) .
\end{gather*}
\end{Proposition}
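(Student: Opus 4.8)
The plan is to prove Proposition~\ref{prop:Fsv} by unwinding the combinatorial formula $\sv = m(F_\infty \Sigma \otimes \id)\Deltatilde$ and comparing it with $\sv S$, exploiting the Hopf-algebraic relations between the coproduct, the antipode, and the multiplication on $\cPdrpl$. First I would recall that $\Sigma$ acts by $(-1)^n \Stilde$ on weight-$n$ pieces, so on $x \in \cPdr_{\mathrm{MPL},n}$ we have $F_\infty \sv(x) = F_\infty m(F_\infty \Sigma \otimes \id)\Deltatilde(x)$. Since $F_\infty$ is a ring homomorphism commuting with multiplication and $F_\infty^2 = \id$, this becomes $m(\Sigma \otimes F_\infty)\Deltatilde(x)$. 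The goal is then to show this equals $(-1)^n \sv(S(x)) = (-1)^n m(F_\infty \Sigma \otimes \id)\Deltatilde(S(x))$.

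The key step is to understand how $\Deltatilde$ interacts with $S$. Because $\Deltatilde$ is (formally) the coproduct with $\Idr \to \Imot$, it is coassociative and the antipode axiom gives $m(S \otimes \id)\Delta = m(\id \otimes S)\Delta = u\epsilon$ on $\cPdrpl$; lifting this, one gets the corresponding identity for $\Stilde$ and $\Deltatilde$ on $\cPmpl$. The standard Hopf-algebra fact is that $\Delta S = \tau (S \otimes S)\Delta$ (the antipode is a coalgebra anti-homomorphism), which lifts to $\Deltatilde \Stilde = \tau(\Stilde \otimes \Stilde)\Deltatilde$, where $\tau$ is the flip. I would substitute this into the expression for $\sv(S(x))$, track the sign $(-1)^n$ carefully through the weight decomposition (each factor in a term of $\Deltatilde(x)$ has weights summing to $n$, and $\Sigma = (-1)^{(\cdot)}\Stilde$ contributes a sign equal to $(-1)^{\text{weight of that factor}}$), and check that the two sides match after using commutativity of $m$ to reorder the two tensor factors. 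Equivalently — and perhaps more cleanly — one can phrase the whole argument in terms of convolution: $\sv$ is essentially $(F_\infty\Stilde) \star \id$ up to signs absorbed into the grading, and the identity $S \star \id = \id \star S = \epsilon$ together with $S$ being a convolution inverse lets one flip $S$ across $\sv$ at the cost of the parity sign.

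I would therefore organize the proof as: (1) rewrite $F_\infty \sv(x)$ using $F_\infty m = m(F_\infty \otimes F_\infty)$ and $F_\infty^2 = \id$ to move the outer $F_\infty$ inside; (2) invoke the lift of the coalgebra-anti-homomorphism property $\Deltatilde \Stilde = \tau(\Stilde\otimes\Stilde)\Deltatilde$ and the lift of $\Stilde^2 = \id$ (valid since $\cPdrpl$ is commutative, graded, connected); (3) insert these into $(-1)^n \sv(S(x))$, expand using the Sweedler-type sum over the terms of $\Deltatilde(x)$, and match the grading signs factor by factor using that $\Sigma|_{\cPdr_{\mathrm{MPL},k}} = (-1)^k \Stilde$; (4) conclude both sides coincide. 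The main obstacle I anticipate is bookkeeping: making the parity signs come out right requires carefully distributing $(-1)^n$ over a bipartition $n = k + (n-k)$ and confirming that the sign attached to each tensor factor by $\Sigma$ combines correctly with the sign flipped in by $\Stilde$ and by the extra $F_\infty$ that does or does not land on a given factor. A secondary subtlety is justifying that the formal "$\Idr \to \Imot$" replacements defining $\Deltatilde$ and $\Stilde$ genuinely inherit the Hopf-algebra identities — but this is guaranteed because these are universal polynomial identities in the structure maps, so they hold after any such substitution; I would state this explicitly rather than re-derive it.
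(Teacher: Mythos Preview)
Your proposal is correct and follows essentially the same approach as the paper: both arguments push $F_\infty$ through $m$ to obtain $m(\Sigma\otimes F_\infty)\Deltatilde$, invoke the lifted Hopf-algebra identities $\Deltatilde\Stilde = \tau(\Stilde\otimes\Stilde)\Deltatilde$ and $\Stilde^2=\id$ (equivalently $\Sigma^2=\id$), and then rearrange to match $\sv S$ up to the parity sign. The only difference is cosmetic: the paper packages the sign bookkeeping you worry about into the grading operator, writing $(\Sigma\otimes\Sigma)\tau\Deltatilde = \Deltatilde S(-1)^Y$ and concluding $F_\infty\sv = \sv S(-1)^Y$ in one line, which spares you the term-by-term Sweedler expansion.
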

\begin{proof}
The following {two} properties are well known and hold in any commutative, graded and connected Hopf algebra {(see, e.g., \cite[Proposition~I.7.1]{Manchon:2001bf} and references therein)}:
\begin{gather*}
(S\otimes S)\tau\Delta =\Delta S ,
\\
S^2 = \id .
\end{gather*}
Since $\Deltatilde$ and $\Stilde = (-1)^Y\Sigma$ are defined by the same combinatorial formulas as $\Delta$ and $S$, but with~$\Idr$ replaced by $\Imot$, it is easy to see that the following identities hold:
\begin{gather*}
(\Sigma\otimes\Sigma)\tau \Deltatilde = \Deltatilde S (-1)^Y,
\\
\Sigma^2 = \id .
\end{gather*}
This gives, with $F_{\infty}^2=\id$,
\begin{align*}
F_{\infty}\sv & = F_{\infty}m(F_{\infty}\Sigma\otimes \id)\Deltatilde
=m(\Sigma\otimes F_{\infty})\Deltatilde\\
& =m(\id\otimes F_{\infty}\Sigma)\tau\Deltatilde S(-1)^Y
=m\tau(F_{\infty}\Sigma\otimes \id)\Deltatilde S(-1)^Y\\
& =\sv S(-1)^Y .
\tag*{\qed}\end{align*}
 \renewcommand{\qed}{}
\end{proof}

\begin{Corollary}
Let $x\in \cPdr_{\mathrm{MPL},>0}$. Then
\begin{gather*}
\sv(x) +(-1)^YF_{\infty}\sv(x)\in \cPm_{\mathrm{MPL},>0}\cdot \cPm_{\mathrm{MPL},>0} .
\end{gather*}
\end{Corollary}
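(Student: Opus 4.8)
The plan is to combine Proposition~\ref{prop:Fsv} with the recursive characterisation of the antipode in a graded connected Hopf algebra. First I would apply $F_\infty$ to the claimed expression $\sv(x) + (-1)^Y F_\infty \sv(x)$ and use $F_\infty^2 = \id$ together with Proposition~\ref{prop:Fsv}, which gives $F_\infty \sv(x) = (-1)^n \sv S(x)$ for $x \in \cPdr_{\mathrm{MPL},n}$. Substituting this in, the expression in question becomes $\sv(x) + \sv S(x) = \sv(x + S(x))$, since $\sv$ is an algebra morphism and in particular $\mathbb{Q}$-linear.

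The key step is then to observe that the antipode axiom $m(\id \otimes S)\Delta = u\epsilon$ reads, for $x \in \cPdr_{\mathrm{MPL},>0}$ with $\Delta(x) = 1\otimes x + x \otimes 1 + \Delta'(x)$,
\begin{gather*}
0 = S(x) + x + m(\id\otimes S)\Delta'(x) ,
\end{gather*}
so that $x + S(x) = -m(\id\otimes S)\Delta'(x)$. Since $\Delta'(x) \in \cPdr_{\mathrm{MPL},>0}\otimes\cPdr_{\mathrm{MPL},>0}$ and $S$ preserves the grading (hence maps $\cPdr_{\mathrm{MPL},>0}$ to itself), the element $m(\id\otimes S)\Delta'(x)$ is a sum of genuine products of two positive-weight de Rham MPL's, i.e.\ it lies in $\cPdr_{\mathrm{MPL},>0}\cdot\cPdr_{\mathrm{MPL},>0}$.

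Finally I would push this through $\sv$: because $\sv\colon \cPdrpl \to \cPmpl$ is a ring homomorphism, it sends $\cPdr_{\mathrm{MPL},>0}\cdot\cPdr_{\mathrm{MPL},>0}$ into $\cPm_{\mathrm{MPL},>0}\cdot\cPm_{\mathrm{MPL},>0}$ (noting $\sv$ respects the weight grading, so positive weight goes to positive weight). Hence $\sv(x + S(x)) = -\sv\big(m(\id\otimes S)\Delta'(x)\big) \in \cPm_{\mathrm{MPL},>0}\cdot\cPm_{\mathrm{MPL},>0}$, which is exactly the claim. There is no real obstacle here — the only point requiring a little care is bookkeeping the grading so that one genuinely lands in the span of \emph{nontrivial} products rather than merely in the image of multiplication; this is where the hypothesis $x \in \cPdr_{\mathrm{MPL},>0}$ (as opposed to all of $\cPdrpl$) is used, and one should also reduce to homogeneous $x$ at the outset so that the single scalar $(-1)^n = (-1)^Y$ makes sense termwise.
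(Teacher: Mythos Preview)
Your proposal is correct and follows essentially the same route as the paper: reduce to homogeneous $x$, use Proposition~\ref{prop:Fsv} to rewrite $\sv(x)+(-1)^nF_\infty\sv(x)=\sv(x+S(x))$, and then invoke the antipode identity to express $x+S(x)$ as $-m(\id\otimes S)\Delta'(x)$ (the paper uses the equivalent $-m(S\otimes\id)\Delta'(x)$), which is manifestly a sum of nontrivial products. The only cosmetic difference is that you spell out why $\sv$ carries $\cPdr_{\mathrm{MPL},>0}\cdot\cPdr_{\mathrm{MPL},>0}$ into $\cPm_{\mathrm{MPL},>0}\cdot\cPm_{\mathrm{MPL},>0}$, which the paper leaves implicit.
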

\begin{proof}
Let $x\in \cPdr_{\mathrm{MPL},n}$, $n>0$.
Proposition~\ref{prop:Fsv} implies
\begin{gather*}\begin{split}
\sv(x) +(-1)^nF_{\infty}\sv(x) & = \sv\left(x+S(x)\right) \\
& = -\sv m(S\otimes \id)\Delta'(x)\in \cPm_{\mathrm{MPL},>0}\cdot \cPm_{\mathrm{MPL},>0} ,
\end{split}\end{gather*}
where the last equality follows from $m(S\otimes \id)\Delta(x) = 0$.
\end{proof}

\subsection{Elementary properties of clean single-valued polylogarithms}
\subsubsection{Shuffle products, path composition and reversal}
The clean single-valued polylogarithms inherit the basic properties of iterated integrals (see Section~\ref{sec:definitions}). Using Theorem~\ref{thm:main}, we see that they take the form:
\begin{enumerate}\itemsep=0pt
\item \emph{Path reversal: }
\begin{gather}\nonumber
\Icsv(x_{n+1};x_n,\ldots,x_1;x_0) = (-1)^n \Icsv(x_0;x_1,\ldots,x_n;x_{n+1}) .
\end{gather}
\item \emph{Path composition:}
\begin{gather}\nonumber
\Icsv(x_0;x_1,\ldots,x_n;x_{n+1}) = \Icsv(x_0;x_1,\ldots,x_n;x) + \Icsv(x;x_{1},\ldots,x_n;x_{n+1}) .
\end{gather}
\item \emph{Shuffle product:} \label{item:shuffle}
\begin{gather}\begin{split}\nonumber
\sum_{\sigma\in\Sigma(m,n)}\!\!\!\Icsv(x_0;x_{\sigma(1)},\ldots,x_{\sigma(m+n)};x) = 0 .
\end{split}\end{gather}
\end{enumerate}

\subsubsection{Reversal of arguments}
\begin{Proposition}\label{prop:reversal}
For $n>0$, we have
\begin{gather}\nonumber
\Icsv(x_{0};x_n,\ldots,x_1;x_{n+1}) = (-1)^{n+1} \Icsv(x_0;x_1,\ldots,x_n;x_{n+1}) .
\end{gather}
\end{Proposition}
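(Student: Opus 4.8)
The plan is to deduce the reversal-of-arguments identity for $\Icsv$ from the corresponding reversal identity for the motivic MPL's together with the product-freeness built into the clean single-valued construction. Recall the path-reversal relation for the motivic iterated integrals,
\begin{gather*}
\Imot(x_{n+1};x_n,\ldots,x_1;x_0) = (-1)^n \Imot(x_0;x_1,\ldots,x_n;x_{n+1}) ,
\end{gather*}
which holds for the de Rham and motivic avatars by the same combinatorial argument as in Section~\ref{sec:definitions}. Combining this with the path-composition formula to move $x_0$ and $x_{n+1}$ to the ``outer'' slots, one obtains a relation that differs from a pure reversal $\Imot(x_0;x_n,\ldots,x_1;x_{n+1})$ precisely by terms that are products of strictly lower-weight motivic MPL's. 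More precisely, I would show
\begin{gather*}
\Imot(x_0;x_n,\ldots,x_1;x_{n+1}) - (-1)^{n+1}\Imot(x_0;x_1,\ldots,x_n;x_{n+1}) \in \cPm_{\mathrm{MPL},>0}\cdot\cPm_{\mathrm{MPL},>0} ,
\end{gather*}
and then apply Theorem~\ref{thm:main} (equivalently, the fact that $\cPm_{\mathrm{MPL},>0}\cdot\cPm_{\mathrm{MPL},>0}\subseteq\ker\cR$ from Proposition~\ref{prop:ker}) to kill the product terms, and finally take $\mathfrak{R}_n$ of both sides.

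The concrete identity among motivic MPL's I would invoke is the classical ``reversal = reversal-of-endpoints up to products'' statement: one has
\begin{gather*}
(-1)^n \Imot(x_0;x_n,\ldots,x_1;x_{n+1}) = \Imot(x_{n+1};x_1,\ldots,x_n;x_0) ,
\end{gather*}
just by relabelling the path-reversal relation, and separately one expands $\Imot(x_{n+1};x_1,\ldots,x_n;x_0)$ using path composition along the path $x_{n+1}\to x_0$ broken at both $x_0$ and $x_{n+1}$ — or, more cleanly, one uses the shuffle-regularisation identity that rewrites an iterated integral with swapped endpoints in terms of the original integral plus a sum of products of shorter integrals sharing the same integrand. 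In either formulation the ``error term'' is manifestly in $\cPm_{\mathrm{MPL},>0}\cdot\cPm_{\mathrm{MPL},>0}$. I would state this auxiliary fact as a small lemma at the motivic level (it is of the same nature as the ``inversion-reversion'' relations referenced in the introduction, e.g.\ \cite{GoncharovMixedTate,panzer_parity}), prove it by induction on the weight $n$ using path composition and the shuffle product, and then feed it into the clean single-valued machine.

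The main obstacle I expect is getting the combinatorics of the ``error'' product terms exactly right — in particular keeping track of which breakpoints contribute and verifying that no genuine weight-$n$ (non-product) term survives besides $(-1)^{n+1}\Imot(x_0;x_1,\ldots,x_n;x_{n+1})$ itself. The cleanest route is probably to avoid an explicit formula for the products altogether: since $\cR$ is an algebra morphism vanishing on $\cPm_{\mathrm{MPL},>0}\cdot\cPm_{\mathrm{MPL},>0}$, all that is needed is the \emph{existence} of such a decomposition, and this existence follows formally from the antipode relation $m(S\otimes\id)\Delta(x)=0$ applied in $\cPdrpl$ exactly as in the proof of the Corollary after Proposition~\ref{prop:Fsv}. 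Indeed, $\Stilde$ (hence $S$ on $\cPdrpl$) acts on $\Idr(x_0;x_1,\dots,x_n;x_{n+1})$ as $(-1)^n$ times the reversed integral plus lower-weight products, so $x + (-1)^{n}(\text{reversed } x) = x - (-1)^n S(x) \bmod$ products, and $x+S(x) = -m(S\otimes\id)\Delta'(x)\in\cPm_{\mathrm{MPL},>0}\cdot\cPm_{\mathrm{MPL},>0}$; applying $\per\circ\sv\circ\Pi\circ\pi^{\dr}$ and then $\mathfrak{R}_n$ yields the claimed sign $(-1)^{n+1}$ and the vanishing of all products. The only care needed is to check the antipode acts on the reversal with the stated sign $(-1)^n$, which is a short computation with the coproduct formula~\eqref{eq:coaction} restricted to de Rham MPL's.
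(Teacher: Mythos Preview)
Your overall strategy is correct and matches the paper's: show that
\[
\Imot(x_0;x_1,\ldots,x_n;x_{n+1}) + (-1)^n\,\Imot(x_0;x_n,\ldots,x_1;x_{n+1}) \in \cPm_{\mathrm{MPL},>0}\cdot\cPm_{\mathrm{MPL},>0}
\]
and then apply Theorem~\ref{thm:main}. Your first sketch (path reversal plus path composition) can indeed be made to work: from $I(x_0;w;x_0)=0$ and path composition one gets $I(x_0;w;x_{n+1})+I(x_{n+1};w;x_0)\in\text{products}$, and then path reversal turns $I(x_{n+1};w;x_0)$ into $(-1)^n I(x_0;\widetilde{w};x_{n+1})$. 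That is a valid argument, though you only gesture at it.

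The route you call ``cleanest'', however, has a genuine gap. You claim that the antipode $S$ of $\cPdrpl$ (equivalently $\Stilde$) sends $\Idr(x_0;x_1,\dots,x_n;x_{n+1})$ to $(-1)^n$ times the reversed integral plus products. This is not correct for the Goncharov Hopf algebra, and the ``short computation with~\eqref{eq:coaction}'' you promise would not produce it. In \emph{any} graded connected Hopf algebra the recursion $S(x)=-x-m(\id\otimes S)\Delta'(x)$ gives $S(x)\equiv -x$ modulo products, so from $x+S(x)\in H_{>0}\cdot H_{>0}$ you only recover the tautology $x-x\in\text{products}$. Your argument is thus circular: identifying $S(x)$ with $(-1)^n(\text{reversed }x)$ modulo products is precisely the statement to be proved.

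The paper avoids this by working in a \emph{different} Hopf algebra: the shuffle Hopf algebra on words in the letters $x_1,\ldots,x_n$, with shuffle product and \emph{deconcatenation} coproduct. There the antipode is exactly $S_{\mathrm{sh}}(w)=(-1)^{|w|}\widetilde{w}$, so the antipode identity $m_{\mathrm{sh}}(S_{\mathrm{sh}}\otimes\id)\Delta_{\mathrm{sh}}(w)=0$ reads
\[
w+(-1)^{|w|}\widetilde{w}=-m_{\mathrm{sh}}(S_{\mathrm{sh}}\otimes\id)\Delta'_{\mathrm{sh}}(w),
\]
a sum of nontrivial shuffle products. Since motivic MPL's satisfy the shuffle product, this formal identity transports verbatim to $\Imot$, and Theorem~\ref{thm:main} finishes the proof. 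The key distinction you missed is that the Hopf algebra whose antipode is reversal is the shuffle Hopf algebra, not $\cPdrpl$ with the Goncharov coproduct.
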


\begin{proof}
Consider the shuffle algebra generated by the letters $x_1,\ldots,x_n$. It is a Hopf algebra whose coproduct is deconcatenation and the antipode is the reversal of words, up to a sign:
\begin{gather*}
\Delta_{\mathrm{sh}}(w) = \sum_{uv=w}u\otimes v ,
\\
S_{\mathrm{sh}}(w) = (-1)^{|w|} \widetilde{w} ,
\end{gather*}
where $\widetilde{w}$ is the word $w$ in reverse order, and $|w|$ its length. If $m_{\mathrm{sh}}$ denotes the shuffle multiplication, we have
\begin{gather*}
0 = m_{\mathrm{sh}}(S_{\mathrm{sh}}\otimes\id)\Delta_{\mathrm{sh}}(w) = w + (-1)^{|w|} \widetilde{w} + m_{\mathrm{sh}}(S_{\mathrm{sh}}\otimes\id)\Delta'_{\mathrm{sh}}(w) .
\end{gather*}
If we take $w=x_1\cdots x_n$, we see that $x_1\cdots x_n+(-1)^nx_n\cdots x_1$ must vanish modulo non-trivial products. This relations must hold in every shuffle algebra, and so in particular the combination $\Imot(x_{0};x_n,\ldots,x_1;x_{n+1}) + (-1)^n \Imot(x_0;x_1,\ldots,x_n;x_{n+1})$ must vanish modulo non-trivial products, from which we deduce Proposition~\ref{prop:reversal} via Theorem~\ref{thm:main}.
\end{proof}

\subsubsection{Unshuffling of leading zeros}
\begin{Proposition}\label{prop:unshuffling}
For any $k \in \mathbb{Z}_{\geq0}$ the following holds
\begin{gather*}
\Icsv\big(0;\{0\}^{k},x_1,\{0\}^{n_1-1},\ldots,x_r,\{0\}^{n_r-1};x_{r+1}\big)
\\ \qquad
{}=(-1)^k \sum_{i_1+\cdots+i_r=k}
\binom{n_1+i_1-1}{i_1}\cdots \binom{n_r+i_r-1}{i_r}
\\ \qquad\hphantom{=}
{}\times \Icsv\big(0;x_1,\{0\}^{n_1-1+i_1},\ldots,x_r,\{0\}^{n_r-1+i_r};x_{r+1}\big) .
\end{gather*}
\end{Proposition}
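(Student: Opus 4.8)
\textbf{Proof plan for Proposition~\ref{prop:unshuffling}.}

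The plan is to establish the corresponding identity at the level of motivic MPL's modulo products, and then invoke Theorem~\ref{thm:main} exactly as in the proof of Proposition~\ref{prop:reversal}. The key observation is that ``unshuffling leading zeros'' is the standard combinatorial operation that expresses a word $0^k w$ in the shuffle algebra in terms of words with no leading zeros, modulo the shuffle ideal. Concretely, in any shuffle algebra one has the identity $0 \shuffle 0^{k-1} w = 0^k w + (\text{terms with the extra } 0 \text{ inserted later in } w)$, and iterating this and solving recursively for $0^k w$ expresses it as a signed sum of words obtained by distributing the $k$ zeros among the ``gaps'' following each $x_j$ (the $x_j$ being the non-zero letters), weighted by multinomial-type coefficients, plus an element of $\cPm_{\mathrm{MPL},>0}\cdot\cPm_{\mathrm{MPL},>0}$. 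The coefficients $\binom{n_j+i_j-1}{i_j}$ count the number of ways to interleave $i_j$ new zeros into the existing block of $n_j-1$ zeros after $x_j$, and the global sign $(-1)^k$ is the parity contribution of pulling the $k$ leading zeros through.

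First I would make precise the algebraic statement: working in the shuffle algebra on the alphabet $\{x_1,\dots,x_r,0\}$ (viewing the $0$'s as a distinguished letter, with the understanding that the corresponding MPL arguments at those slots are $0$), I claim that
\begin{gather*}
0^k x_1 0^{n_1-1}\cdots x_r 0^{n_r-1}
\equiv (-1)^k\!\!\sum_{i_1+\cdots+i_r=k}\!\!\prod_{j=1}^r\binom{n_j+i_j-1}{i_j}\,
x_1 0^{n_1-1+i_1}\cdots x_r 0^{n_r-1+i_r}
\end{gather*}
modulo the ideal generated by nontrivial shuffle products. This is proved by induction on $k$: the case $k=0$ is trivial, and for the inductive step one writes $0^k w = \tfrac{1}{(\text{count})}\big(0 \shuffle 0^{k-1}w - (\text{shuffle terms where the leading }0\text{ is not first})\big)$; the extra shuffle terms are precisely those in which one of the $k$ zeros has migrated into a later gap, and after applying the inductive hypothesis to each, reorganising the sum, and using the Pascal-type recursion $\binom{n+i-1}{i} = \binom{n+i-2}{i-1} + \binom{n+i-2}{i}$ together with the combinatorics of where the single extra zero lands, one recovers the claimed closed form. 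Since this identity is a consequence of the shuffle Hopf algebra structure alone (deconcatenation coproduct, reversal-up-to-sign antipode), it holds in particular for the motivic MPL's $\Imot$, meaning the combination
\begin{gather*}
\Imot\big(0;0^k,x_1,0^{n_1-1},\dots,x_r,0^{n_r-1};x_{r+1}\big)
-(-1)^k\!\!\sum_{i_1+\cdots+i_r=k}\!\!\prod_{j=1}^r\binom{n_j+i_j-1}{i_j}\,
\Imot\big(0;x_1,0^{n_1-1+i_1},\dots;x_{r+1}\big)
\end{gather*}
lies in $\cPm_{\mathrm{MPL},>0}\cdot\cPm_{\mathrm{MPL},>0}$. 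Theorem~\ref{thm:main} then yields the stated relation among the $\Icsv$, noting that all terms have the same weight $N = k + n_1 + \cdots + n_r$, so the parity projector $\mathfrak{R}_N$ acts uniformly and the linear combination is genuinely killed.

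The main obstacle will be the bookkeeping in the inductive step: keeping careful track of which shuffles of a single extra $0$ into $0^{k-1}w$ produce leading-zero words (to be re-expanded by induction) versus which already have $x_1$ first, and verifying that the binomial coefficients assemble correctly after re-indexing. An alternative that sidesteps part of this is to prove the purely combinatorial word identity directly by a generating-function argument — encoding each gap after $x_j$ by a variable and observing that inserting zeros into a block of length $n_j-1$ has generating function $(1-t)^{-n_j}$, so distributing $k$ zeros among $r$ blocks has coefficient $[t^k]\prod_j (1-t)^{-n_j} = \binom{\,\sum(n_j)+k-r-1}{k}$ in total, refined by the per-block factorisation $\prod_j [t^{i_j}](1-t)^{-n_j} = \prod_j\binom{n_j+i_j-1}{i_j}$ — and then checking that the shuffle-ideal error term is exactly what the recursion predicts. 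Either way, once the motivic modulo-products identity is in hand, the passage to $\Icsv$ is immediate from Theorem~\ref{thm:main}, precisely mirroring the proof of Proposition~\ref{prop:reversal}.
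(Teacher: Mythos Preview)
Your proposal is correct and follows essentially the same route as the paper: an induction on $k$, with the inductive step driven by the single-letter shuffle $0 \shuffle 0^{k-1}w$ (so that $0^k w \equiv -\tfrac{1}{k}\sum_j n_j\, 0^{k-1}x_1\cdots x_j 0^{n_j}\cdots$ modulo products), followed by re-indexing and a binomial identity to collapse the coefficients. The only cosmetic difference is that the paper works directly with $\Icsv$ using the already-established vanishing of shuffle products (Section~4.2.1, item~3) rather than in the abstract shuffle algebra before invoking Theorem~\ref{thm:main}; for the coefficient bookkeeping, the paper uses $n_j\binom{n_j+i_j-1}{i_j-1}=i_j\binom{n_j+i_j-1}{i_j}$ and $\sum_j i_j = k$ rather than Pascal's recursion, which makes the verification slightly cleaner than the Pascal-based reorganisation you sketch.
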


\begin{proof}
This is proven by induction. The case $k = 0$ holds trivially wherein both sides are identical, so we may suppose this formula holds for all $n \leq k$. Now observe
\begin{gather*}
\Icsv\big(0; \{0\}^{k+1}, x_1, \{0\}^{n_1-1}, \ldots, x_r, \{0\}^{n_r-1}; x_{r+1}\big)
\\ \qquad
{}= - \frac{1}{k+1} \sum_{j = 1}^{r} n_j \Icsv\big(0; \{0\}^{k}, x_1, \{0\}^{n_1-1}, \ldots, x_j, \{0\}^{(n_j-1) + 1}, \ldots, x_r, \{0\}^{n_r-1}; x_{r+1}\big) ,
\end{gather*}
using the shuffle product property (\ref{item:shuffle}) above. Substituting the induction assumption into the second line shows that each term in the result is indexed by a composition $i_1' + \cdots + i_r' = k+1$, added to the exponents $n_1-1, \ldots, n_r-1$ of the original integral. Therefore we need only to compute the coefficient and check that it matches the one claimed in the formula.

This coefficient is
\begin{gather*}
 -\frac{1}{k+1} \sum_{j=1}^r n_j (-1)^k \binom{n_1 + i_1' - 1}{i_1'} \cdots \binom{(n_j + 1) + (i_j' - 1) - 1}{(i_j' - 1)} \cdots \binom{n_r + i_r' - 1}{i_r'} ,
\end{gather*}
where $i_1' + \cdots + i_r' = k+1$. Observe that
\begin{gather*}
 n_j \binom{(n_j + 1) + (i_j' - 1) - 1}{(i_j' - 1)} = i_j' \binom{n_j + i_j' - 1}{i_j'} ,
\end{gather*}
so the coefficient is equal to
\begin{gather*}
\frac{ (-1)^{k+1} }{k+1} \sum_{j=1}^r i_j' \binom{n_1 + i_1' - 1}{i_1'} \cdots \binom{n_j + i_j' - 1}{i_j'} \cdots \binom{n_r + i_r' - 1}{i_r'}
\\ \qquad
{}= \frac{ (-1)^{k+1} }{k+1} \underbrace{\Bigg( \sum_{j=1}^r i_j' \Bigg)}_{k+1} \binom{n_1 + i_1' - 1}{i_1'} \cdots \binom{n_r + i_r' - 1}{i_r'}
\\ \qquad
{}= (-1)^{k+1} \binom{n_1 + i_1' - 1}{i_1'} \cdots \binom{n_r + i_r' - 1}{i_r'} ,
\end{gather*}
as claimed.
\end{proof}

\subsection[Recursion and the total holomorphic differential of \protect{I\textasciicircum{}\{csv\}}]
{Recursion and the total holomorphic differential of $\boldsymbol {I^{\rm csv}}$}

\begin{Proposition}\label{prop:recursion}
Write the following shorthand \begin{gather}
C(x_0;x_{1},\ldots,x_{n};x_{n+1}) := \cR\big[\Idr(x_0;x_{1},\ldots,x_{n};x_{n+1})\big] ,
\end{gather}
then $C$ satisfies the following recursive formula
\begin{gather}
\begin{split}
& C(x_0;x_1,\ldots,x_n;x_{n+1}) = \Isv(x_0;x_1,\ldots,x_n;x_{n+1})\\
& {}- \frac{1}{n} \Bigg[ 	\sum_{\substack{0\le i< j\le n \\ (i,j) \neq (0,n)}}(j-i)
 \Isv(x_0;x_1,\ldots,x_i,x_{j+1},\ldots,x_{n};x_{n+1})
 C(x_i;x_{i+1},\ldots,x_{j};x_{j+1})\Bigg] .
\end{split} \label{eq:recursion}
\end{gather}
\end{Proposition}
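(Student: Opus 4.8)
The plan is to unfold the definition $\cR = \per \circ \sv \circ \Pi \circ \pi^{\dr}$ and use the explicit formula for $\Pi = Y^{-1} D = Y^{-1}(S \star Y)$ together with the coaction formula \eqref{eq:coaction}. First I would write $C(x_0;x_1,\ldots,x_n;x_{n+1}) = \cR[\Idr] = \per\,\sv\,\Pi[\Idr]$, and compute $\Pi[\Idr] = \tfrac1n D[\Idr] = \tfrac1n (S \star Y)[\Idr] = \tfrac1n m(S \otimes Y)\Delta[\Idr]$. Applying the coproduct formula on $\cPdrpl$ (the analogue of \eqref{eq:coaction} with $\Imot \to \Idr$), the sum over subsequences $0 = i_0 < i_1 < \cdots < i_k < i_{k+1} = n+1$ produces in the left tensor factor $\Idr(x_0;x_{i_1},\ldots,x_{i_k};x_{n+1})$ and in the right factor a product $\prod_{p=0}^k \Idr(x_{i_p};\ldots;x_{i_{p+1}})$; the operator $Y$ multiplies the right factor by its total weight, which is $n - k$. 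Since $\sv$ is an algebra morphism, applying $\per\,\sv$ turns $m(S \otimes Y)$ into a sum of products of $\Isv$'s — except that the left factor carries an antipode $S$.

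The key simplification is that the antipode on the \emph{left} factor can be absorbed. The operator $\cR = \per\,\sv\,\Pi\,\pi^{\dr}$ is an algebra morphism $\cPmpl \to \mathbb{C}$ that kills products (Proposition~\ref{prop:ker}), so in any product appearing on the right of the coaction, only the terms with exactly one nontrivial factor survive the eventual $\cR$; this is exactly what localizes the double subsequence sum to a single ``interval'' $[i,j]$. Concretely, I would argue that the only subsequences contributing are those where all but one of the gap-intervals $(x_{i_p};\ldots;x_{i_{p+1}})$ have length $\le 1$ (hence are trivial or a single $\Idr(a;b;c) = $ weight-one piece which $\sv$ handles directly), so that the $k$-fold subsequence collapses to the data of one sub-interval $[i,j]$ with $0 \le i < j \le n$, $(i,j)\neq(0,n)$, contributing weight $j - i$ from $Y$. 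The remaining outer polylog $\Idr(x_0;x_1,\ldots,x_i,x_{j+1},\ldots,x_n;x_{n+1})$ gets its $\sv$ applied, giving $\Isv(x_0;x_1,\ldots,x_i,x_{j+1},\ldots,x_n;x_{n+1})$, while the inner block, \emph{still sitting inside $\cR$} because we have not yet applied $\Pi$ to it — no: actually here one must be careful, and I would instead recognize the inner block as again $C(x_i;x_{i+1},\ldots,x_j;x_{j+1})$ by iterating. The cleanest route is induction on $n$: assume the formula for all weights $< n$, expand $D[\Idr]$ via $m(S\otimes Y)\Delta'$, isolate the ``top'' term $\Isv(x_0;\ldots;x_{n+1})$ (coming from the trivial-on-one-side parts of the coproduct after $\sv$ reassembles $S \star \id$-type combinations into the single-valued map), and match the correction terms with $\Isv \cdot C$ using the induction hypothesis on the shorter inner integrals together with the factor $(j-i)/n$ from $Y^{-1}$ and $Y$.

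The main obstacle I anticipate is bookkeeping the antipode on the left tensor factor and showing that $\per\,\sv$ of the combination $m(S \otimes Y)\Delta'[\Idr]$ reorganizes into $\Isv$ (which itself already contains an $\sv = m(F_\infty \Sigma \otimes \id)\Deltatilde$, i.e.\ another antipode-type structure) times the lower $C$'s, with exactly the stated signs and the weight coefficient $j-i$. One has to use that $\sv$ is a ring homomorphism to pull the product through, and that $\sv S = (-1)^Y F_\infty \sv$ (Proposition~\ref{prop:Fsv}) to convert the leftover $S$ into something expressible via $\Isv$; the factors of $(-1)^Y$ and $F_\infty$ must then cancel against the combinatorial signs in the coproduct so that no complex conjugation survives on the outer factor. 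Verifying that the ranges $0 \le i < j \le n$ with $(i,j) \neq (0,n)$ and the coefficient $(j-i)/n$ emerge correctly is the delicate computational core; everything else is a direct unwinding of definitions.
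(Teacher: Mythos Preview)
Your approach has a genuine gap: you expand the Dynkin operator as $D = S \star Y = m(S \otimes Y)\Delta$, which places the antipode on the \emph{left} factor and only $Y$ on the \emph{right}. But the recursion you are trying to prove has $C$ (which is $\per\,\sv\,\Pi$, containing another copy of $D$) on the \emph{right} factor. There is no $\Pi$ or $D$ in the right slot of $m(S\otimes Y)\Delta$, so your argument that ``only single-interval terms survive because $\cR$ kills products'' does not apply here: the right factor carries $Y$, not $\Pi$, and $Y$ does not annihilate products. Likewise, pushing $\sv$ through and invoking $\sv\,S = (-1)^Y F_\infty\,\sv$ leaves you with a complex-conjugated $\overline{\Isv}$ on the outer factor and still no $C$ on the inner one; the cancellation of $F_\infty$ and $(-1)^Y$ you hope for does not happen, and you yourself flag this as ``the main obstacle.''

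The paper avoids all of this by using the \emph{other} defining relation for $D$, namely \eqref{eq:id_S_D}: $\id \star D = Y$, which for $x$ of weight $n$ reads
\[
D(x) = n\,x - m(\id \otimes D)\Delta'(x).
\]
Now there is no antipode at all, and $D$ sits in the right tensor slot. Applying $\sv$ (a ring homomorphism) and dividing by $n$ gives
\[
(\sv\circ\Pi)(x) = \sv(x) - \tfrac{1}{n}\,m\big(\sv \otimes (Y\cdot(\sv\circ\Pi))\big)\Delta'(x),
\]
and \emph{now} the projector $\Pi$ in the second slot legitimately kills all coproduct terms whose right factor is a nontrivial product. What remains is exactly the infinitesimal coproduct, i.e.\ the single sub-interval $(i,j)$ with weight factor $j-i$, and the formula drops out with no induction, no $F_\infty$, and no sign bookkeeping. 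The missing idea in your proposal is simply to start from $\id\star D = Y$ rather than from $D = S\star Y$.
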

	
\begin{proof}
Since $Y=\id\star D$, the Dynkin operator satisfies the following recursion, valid in every graded commutative Hopf algebra $H$:
\begin{align}
D(x) &= n x - m(\id\otimes D)\Delta'(x) \nonumber
\\
 &= n x - m(\id\otimes (Y\cdot \Pi))\Delta'(x) ,\qquad x \in H_n ,\quad n>0 .
\label{eqn:Drecursion}
\end{align}
This gives
\begin{align*}
(\sv\circ \Pi)(\Idr(x_0;x_1,\ldots,x_n;x_{n+1})) ={}& \Isv(x_0;x_1,\ldots,x_n;x_{n+1})
\\
& -\frac{1}{n} m(\sv\otimes (Y\cdot (\sv\circ\Pi)))\Delta'(\Idr(x_0;x_1,\ldots,x_n;x_{n+1})) .
\end{align*}
Because of the projector $\Pi$ in the second entry of the tensor product, we only need to consider terms in the reduced coproduct that have no product in the second entry, cf.~\eqref{eq:coaction}). This constraint is described via the infinitesimal coproduct (cf.~\cite{Brown:2011ik}), and one obtains the recursive formula directly therefrom, wherein we must exclude the case $(i,j) = (0,n)$ because we have taken the reduced coproduct.
	\end{proof}

Since $\Icsv(x_0;x_{1},\ldots,x_{n};x_{n+1}) = \mathfrak{R}_n\big[C(x_0;x_{1},\ldots,x_{n};x_{n+1})\big]$, \eqref{eq:recursion} can be interpreted as a~recursion for the clean single-valued MPL's.

We recall now that the total differential of an MPL is given by
\begin{gather*}
\mathrm{d}I(x_0; x_1,\ldots, x_n; x_{n+1}) = \sum_{k=1}^n I\big(x_0; x_1,\ldots, \widehat{x_k} ,\ldots, x_n; x_{n+1}\big) \mathrm{d} I(x_{k-1}; x_k; x_{k+1}) .
\end{gather*}
The function $C$ satisfies a similar formula for the total holomorphic differential $\partial$. There is no correspondingly simple formula for the total antiholomorphic differential, though, since the single-valued map only preserves the holomorphic differential.

\begin{Proposition}
The total holomorphic differential of the function $C$ is given in weight $1$ by
\begin{gather*}
\partial C(x_0; x_1; x_2) = \mathrm{d} I(x_0; x_1; x_2) ,
\end{gather*}
and in weight $n > 1$ by
\begin{gather*}
\partial C(x_0; x_1,\ldots,x_n; x_{n+1}) = \frac{n-1}{n} \Bigg[
\sum_{k=1}^n C(x_0; x_1,\ldots,\widehat {x_k},\ldots,x_n;x_{n+1}) \mathrm{d} I(x_{k-1}; x_k; x_{k+1}) \\ \hphantom{\partial C(x_0; x_1,\ldots,x_n; x_{n+1}) = \frac{n-1}{n} \Bigg[}
{} - C(x_0; x_1,\ldots, x_{n-1}; x_n) \mathrm{d}I(x_0; x_n; x_{n+1})
\\ \hphantom{\partial C(x_0; x_1,\ldots,x_n; x_{n+1}) = \frac{n-1}{n} \Bigg[}
{} - C(x_1; x_2,\ldots,x_n; x_{n+1}) \mathrm{d} I(x_0; x_1,x_{n+1})\Bigg].
\end{gather*}
	
\begin{proof}
We prove this via the recursion in Proposition \ref{prop:recursion}; one can check directly the case $n = 1$. Namely, we aim to compute
\begin{gather*}
\partial C(x_0; x_1; x_2) = \partial \Isv(x_0; x_1; x_2) ,
\end{gather*}
where we have computed $C(x_0; x_1;x_2) = \Isv(x_0; x_1; x_2)$ either via the recursion in Proposition~\ref{prop:recursion} or directly from the definition. The holomorphic derivative is reserved by the single-valued map, so we immediately obtain
\begin{gather*}
\partial C(x_0; x_1; x_2) = \sv \mathrm{d} I(x_0; x_1; x_2) = \mathrm{d} I(x_0; x_1; x_2) ,
\end{gather*}
since the total derivative of the weight 1 function is rational, and hence single-valued already.
		
Note that the total holomorphic differential of $\Icsv$ is given by the same formula as for the total differential of $I$, with $I \mapsto \Isv$ but with $\mathrm{d} I$ unchanged, namely
\begin{gather*}
\partial \Isv(x_0; x_1,\ldots, x_n; x_{n+1}) =
\sum_{k=1}^n \Isv\big(x_0; x_1,\ldots, \widehat{x_k} ,\ldots, x_n; x_{n+1}\big) \mathrm{d} I(x_{k-1}; x_k; x_{k+1}) .	
\end{gather*}
Now for weight $n$ the recursion implies
\begin{gather*}
\partial C(x_0; x_1,\ldots,x_n; x_{n+1}) =
\partial \Isv(x_0; x_1,\ldots,x_n; x_{n+1})
\\ \qquad
{} -\sum_{\substack{0\leq i<j\leq n \\ (i,j) \neq (0,n)}} \bigg[ \frac{j-i}{n} \cdot \partial \Isv(x_0; x_1, \ldots, x_i, x_{j+1}, \ldots, x_{n}; x_{n+1})
 C(x_i; x_{i+1}, \ldots, x_{j}; x_{j+1})
\\ \qquad\hphantom{-\sum_{\substack{0\leq i<j\leq n \\ (i,j) \neq (0,n)}} \bigg[}
{} + \frac{j\!-\!i}{n} \cdot \Isv(x_0; x_1, \ldots, x_i, x_{j+1}, \ldots, x_{n}; x_{n+1})
 \partial C(x_i; x_{i+1}, \ldots, x_j; x_{j+1}) \bigg] .
\end{gather*}
Then by taking care of terms which cross the jump $x_i$, $x_{j+1}$, we can write
\begin{gather*}
 \partial \Isv(x_0; x_1, \ldots, x_i, x_{j+1}, \ldots, x_{n}; x_{n+1})
\\ \qquad
{}= \sum_{k=1}^{i-1} \Isv\big(x_0; x_1, \ldots, \widehat{x_k}, \ldots, x_i, x_{j+1}, \ldots, x_n; x_{n+1}\big) \mathrm{d}I (x_{k-1}; x_k; x_{k+1})
\\ \qquad\hphantom{=}
{} + \sum_{k=j+2}^n \Isv\big(x_0; x_1, \ldots, x_i, x_{j+1}, \ldots, \widehat{x_k}, \ldots, x_n; x_{n+1}\big) \mathrm{d}I (x_{k-1}; x_k; x_{k+1})
\\ \qquad\hphantom{=}
{}+ \Isv(x_0; x_1,\ldots,x_{i-1}, x_{j+1}, \ldots, x_n; x_{n+1}) \mathrm{d}I(x_{i-1}; x_i; x_{j+1})
\\ \qquad\hphantom{=}
{} + \Isv(x_0; x_1,\ldots,x_i, x_{j+2}, \ldots, x_n; x_{n+1}) \mathrm{d}I(x_{i}; x_{j+1}; x_{j+2}) .
\end{gather*}
Correspondingly, by the induction hypothesis, for $j-i > 1$ we get
\begin{gather*}
\partial C(x_i; x_{i+1}, \ldots, x_j; x_{j+1})
\\ \qquad
{}= \frac{j-i - 1}{j-i} \Bigg[
\sum_{k=i+1}^j C\big(x_i; x_{i+1},\ldots,\widehat {x_k},\ldots,x_j;x_{j+1}\big) \mathrm{d} I(x_{k-1}; x_k; x_{k+1})
\\ \qquad\hphantom{=\frac{j-i - 1}{j-i} \Bigg[}
{} - C(x_i; x_{i+1},\ldots, x_{j-1}; x_j) \mathrm{d}I(x_i; x_j; x_{j+1})
\\ \qquad\hphantom{=\frac{j-i - 1}{j-i} \Bigg[}
{} - C(x_{i+1}; x_{i+2},\ldots,x_j; x_{j+1}) \mathrm{d} I(x_i; x_{i+1}; x_{j})\Bigg] ,
\end{gather*}
and for $j-i = 1$ we find $\partial C(x_i; x_{i+1}, \ldots, x_j; x_{j+1}) = \mathrm{d} I(x_i; x_{i+1}; x_{i+2})$. \medskip
		
		Now we note that the $\mathrm{d}I$ terms in $\partial C$ occur only with certain fixed patterns, namely\linebreak$\mathrm{d}I(x_{k-1};x_{k};x_{k+1})$ wherein all arguments are consecutive, and either $\mathrm{d}I(x_{i}; x_{i+1}; x_j)$ and/or $\mathrm{d}I(x_i; x_j; x_{j+1})$ wherein the first two, respectively last two, are consecutive arguments.
		
		So first we ask what the coefficient of $\mathrm{d}I(x_{k-1}; x_k; x_{k+1})$, for fixed $k$, is. It is seen to be the following, where the first line arises from differentiating the $\Isv$ appearing outside the sum, the second and third line arise from differentiating the $\Isv$ inside the sum, the fourth line from differentiating $C$ inside the sum, and the last line deals with the edge case where one has differentiated $C(x_i; x_{i+1}; x_{i+1})$ when $j = i+1$, for $j = k$ in order to obtain $\mathrm{d}I(x_{k-1}; x_k; x_{k+1})$. (Note that the corresponding term in line 4 gives 0 in this case, so no extra restriction is necessary there.)
\begin{gather*}
\Isv\big(x_0; x_1,\ldots,\widehat{x_k},\ldots,x_n;x_{n+1}\big)
\\ \qquad
{} -\sum_{\substack{0 \leq i < j \leq n \\ (i,j) \neq (0,n) \\ k < i}} \frac{j-i}{n} \Isv\big(x_0, x_1,\ldots,\widehat{x_k},\ldots,x_i,x_{j+1},\ldots,x_n;x_{n+1}\big) C(x_i; x_{i+1}, \ldots, x_j; x_{j+1})
\\ \qquad
{} - \sum_{\substack{0 \leq i < j \leq n \\ (i,j) \neq (0,n) \\ j+1 < k}}\!\!\! \frac{j-i}{n} \Isv\big(x_0, x_1,\ldots,,x_i,x_{j+1},\ldots,\widehat{x_k},\ldots,x_n;x_{n+1}\big) C(x_i; x_{i+1}, \ldots, x_j; x_{j+1})
\\ \qquad
{} - \sum_{\substack{0 \leq i < j \leq n \\ (i,j) \neq (0,n) \\ i < k < j+1}} \frac{j-i}{n} \, \frac{ j-i-1}{j-i} \Isv(x_0, x_1,\ldots,,x_i,x_{j+1},\ldots,x_n;x_{n+1})
\\ \qquad \hphantom{- }
{}\times C\big(x_i; x_{i+1}, \ldots,\widehat{x_k},\ldots, x_j; x_{j+1}\big)
 - \frac{1}{n} \Isv(x_0, x_1,\ldots,x_{k-1},x_{k+1},\ldots,x_n;x_{n+1}) .
\end{gather*}
This is nothing but the recursion for $C$ applied to
\begin{gather*}
\frac{n-1}{n} C\big(x_0; x_1,\ldots,\widehat{x_k},\ldots,x_n;x_{n+1}\big) .
\end{gather*}
		
Now we ask about the coefficient of $\mathrm{d}I(x_k, x_{k+1}; x_{\ell+1})$, where $k + 2 < \ell+1$ for non-consecutive arguments. This term arises from either differentiating the $\Isv$ term in
\begin{gather*}
\Isv(x_0; x_1,\ldots,x_{k},x_{k+1}, x_{\ell+1}, \ldots, x_n; x_{n+1}) C(x_{k+1}; x_{k+2}, \ldots, x_\ell; x_{\ell+1}) ,
\end{gather*}
where $(i,j) = (k+1,\ell)$, or by differentiating the $C$ term in
\begin{gather*}
\Isv(x_0; x_1,\ldots,x_{k-1},x_k, x_{\ell+1}, \ldots, x_n; x_{n+1}) C(x_k; x_{k+1}, \ldots, x_\ell; x_{\ell+1}) ,
\end{gather*}
where $(i,j) = (k,\ell)$. We note that for each choice of $(k,\ell) \neq (0,n)$ such that $0 \leq k < \ell \leq n$ with $k + 1 < \ell$, both terms contribute to the coefficient. However when $(k,\ell) = (0,n)$ only the~$\Isv$ derivative contributes as $(i,j) = (k,\ell) = (0,n)$ is excluded from the summation, and the~$C$ derivative therewith.
		
When $(k,\ell) \neq (0,n)$ we find the coefficient of $\mathrm{d}I(x_{k};x_{k+1};x_\ell)$ to be
\begin{gather*}
\frac{\ell - (k+1)}{n} \Isv\big(x_0; x_1,\ldots,x_{k},\widehat{x_{k+1}}, x_{\ell+1}, \ldots, x_n; x_{n+1}\big) C(x_{k+1}; x_{k+2}, \ldots, x_\ell; x_{\ell+1})
\\ \qquad
{}- \frac{\ell \!-k}{n} \frac{\ell-k\!-1}{\ell-k} \Isv(x_0; x_1,\ldots,x_{k-1},x_k, x_{\ell+1}, \ldots, x_n; x_{n+1})
\\ \qquad \hphantom{=}
{}\times C\big(\widehat{x_k}; x_{k+1}, \ldots, x_\ell; x_{\ell+1}\big)\! = 0.
\end{gather*}
However when $(k,\ell) = (0,n)$ we find the coefficient of $\mathrm{d}I(x_0;x_1;x_{n+1})$ to be
\begin{gather*}
\frac{\ell - (k+1)}{n} \Isv\big(x_0;\widehat{x_1};x_n\big) C(x_1; x_2,\ldots,x_n; x_{n+1}) = \frac{n-1}{n} C(x_1; x_2,\ldots,x_n; x_{n+1}) .
\end{gather*}
		Exactly the same consideration applies to the coefficient of $\mathrm{d}I(x_k; x_{\ell};x_{\ell+1})$, wherein the terms pairwise cancel, except for the case $(k,\ell) = (0,n)$, where only one term occurs which cannot cancel. This completes the proof of the formula for the total holomorphic derivative of $C$.
 	\end{proof}
\end{Proposition}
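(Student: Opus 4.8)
The plan is to prove the formula for $\partial C$ by induction on the weight $n$, using the recursion from Proposition~\ref{prop:recursion} as the engine. The base case $n=1$ is immediate: one has $C(x_0;x_1;x_2)=\Isv(x_0;x_1;x_2)$, the holomorphic differential is preserved by $\sv$, and $\mathrm{d} I(x_0;x_1;x_2)$ is already rational hence single-valued. For the inductive step I would apply $\partial$ to the right-hand side of~\eqref{eq:recursion}, using two inputs: first, that the holomorphic differential commutes with the single-valued map, so $\partial\Isv$ is computed by the same combinatorial formula as $\mathrm{d} I$ but with $I\mapsto\Isv$ (and $\mathrm{d} I$ kept rational); and second, the induction hypothesis applied to each $C(x_i;x_{i+1},\ldots,x_j;x_{j+1})$ with $j-i<n$, together with the elementary base formula when $j-i=1$.

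The heart of the argument is a bookkeeping computation: after differentiating, every term carries a $\mathrm{d} I(a;b;c)$ factor where $(a,b,c)$ is one of three shapes --- a fully consecutive triple $(x_{k-1};x_k;x_{k+1})$, or a ``left-consecutive'' triple $(x_k;x_{k+1};x_{\ell+1})$, or a ``right-consecutive'' triple $(x_k;x_\ell;x_{\ell+1})$ --- so I would organize the proof by collecting, for each fixed such triple, the total coefficient appearing in $\partial(\text{RHS of }\eqref{eq:recursion})$. For the fully consecutive triple $\mathrm{d} I(x_{k-1};x_k;x_{k+1})$ there are contributions from differentiating the outer $\Isv$, from differentiating the inner $\Isv$ in the sum (split according to whether the removed index $k$ lies before $i$, after $j+1$, or strictly between), and from differentiating the inner $C$ factor (which by induction again produces a consecutive $\mathrm{d} I$); the key observation is that the weight-factors $(j-i)/n$ times the induction-hypothesis prefactor $(j-i-1)/(j-i)$ telescope into exactly $1/n$ times a shifted weight-factor, so that the assembled coefficient is precisely $\frac{1}{n}$ times the recursion~\eqref{eq:recursion} applied to the weight-$(n-1)$ integral $C(x_0;x_1,\ldots,\widehat{x_k},\ldots,x_n;x_{n+1})$ --- hence equal to $\frac{n-1}{n} C(x_0;x_1,\ldots,\widehat{x_k},\ldots,x_n;x_{n+1})$, as claimed. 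For the left-consecutive triple $\mathrm{d} I(x_k;x_{k+1};x_\ell)$ there are exactly two contributing terms --- one from differentiating the $\Isv$ factor in the $(i,j)=(k+1,\ell)$ summand and one from differentiating the $C$ factor in the $(i,j)=(k,\ell)$ summand --- and the algebraic identity $\binom{\text{weight}}{}$-style cancellation $\frac{\ell-(k+1)}{n}=\frac{\ell-k}{n}\cdot\frac{\ell-k-1}{\ell-k}$ makes them cancel whenever $(k,\ell)\neq(0,n)$; the single exceptional pair $(k,\ell)=(0,n)$ survives because the summand $(i,j)=(0,n)$ is excluded from the reduced coproduct, leaving the claimed $-\frac{n-1}{n}C(x_1;x_2,\ldots,x_n;x_{n+1})\,\mathrm{d} I(x_0;x_1;x_{n+1})$ term. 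The right-consecutive triple $\mathrm{d} I(x_k;x_\ell;x_{\ell+1})$ is handled by the mirror-image of this argument, yielding the remaining boundary term $-\frac{n-1}{n}C(x_0;x_1,\ldots,x_{n-1};x_n)\,\mathrm{d} I(x_0;x_n;x_{n+1})$.

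The main obstacle I expect is purely organizational rather than conceptual: one must correctly enumerate which $(i,j)$ summands of~\eqref{eq:recursion}, after differentiation via either the Leibniz rule on the product or the induction hypothesis on $C$, produce a given target triple $\mathrm{d} I(a;b;c)$, being careful about the degenerate sub-cases ($j-i=1$, where $\partial C$ reduces to a bare $\mathrm{d} I$ with no $C$ factor, so the corresponding line~4 contribution vanishes; and the excluded pair $(i,j)=(0,n)$, which is exactly what breaks the otherwise-perfect cancellation and manufactures the two boundary terms). Keeping the index ranges straight when a hat $\widehat{x_k}$ is inserted into an $\Isv$ whose argument list has already had a block $x_{i+1},\ldots,x_j$ excised is the delicate part; once the triples are classified and the weight-coefficient telescoping identities $n_j\binom{\cdots}{\cdots}=i_j'\binom{\cdots}{\cdots}$-type relations (as already used in the proof of Proposition~\ref{prop:unshuffling}) are invoked, each coefficient collapses to the asserted value and the proof concludes.
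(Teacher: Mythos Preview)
Your proposal is correct and follows essentially the same approach as the paper: induction on $n$ via the recursion~\eqref{eq:recursion}, differentiating via the Leibniz rule, using that $\partial\Isv$ obeys the same combinatorial formula as $\mathrm{d} I$, classifying the resulting $\mathrm{d} I$-factors into the three triple-types (fully consecutive, left-consecutive, right-consecutive), and verifying the same pairwise cancellations with the exceptional $(0,n)$ case producing the two boundary terms. One small remark: the relevant coefficient identities here are just the elementary cancellation $\frac{j-i}{n}\cdot\frac{j-i-1}{j-i}=\frac{j-i-1}{n}$ and the recognition of the weight-$(n-1)$ recursion, not the binomial-type identities from Proposition~\ref{prop:unshuffling}, so that cross-reference is a bit of a red herring.
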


We note that this differential formula is closely related to a recursion for the mod-products symbol $ \Pi_\bullet \mathcal{S}$ of an iterated integral, as given in \cite[equation (4)]{Charlton:2019gvp}
\begin{align*}
\Pi \mathcal{S} \big( I^{\dr}(x_0; \ldots; x_{n+1}) \big)
={}& \sum_{j=1}^n
\Pi \mathcal{S} \big( I^{\dr}(x_0; x_1, \ldots, \widehat{x_j}, \ldots, x_n; x_{n+1}) \big) \otimes I^{\dr}(x_{j-1}; x_j; x_{j+1})
\\
&- \Pi \mathcal{S} \big( I^{\dr}(x_1; x_2, \ldots, x_{n}; x_{n+1}) \big) \otimes I^{\dr}(x_0; x_1; x_{n+1})
\\
&- \Pi \mathcal{S} \big( I^{\dr}(x_0; x_1, \ldots, x_{n-1}; x_{n}) \big) \otimes I^{\dr}(x_0; x_n; x_{n+1}) .
\end{align*}

\section{Examples in small depths}\label{sec:examples}

In this section we present results for clean single-valued polylogarithms in small depths.
The path composition formula, together with Proposition~\ref{prop:unshuffling} and
\begin{gather*}
\Icsv(k\cdot x_0;k\cdot x_1,\ldots,k\cdot x_n;k\cdot x_{n+1}) =\Icsv( x_0; x_1,\ldots, x_n; x_{n+1}) ,
\end{gather*}
with $x_1\neq x_0$, $x_n\neq x_{n+1}$ and {$k\in\mathbb{C}\setminus\{0\}$}
(this identity follows immediately from the corresponding identity for MPL's, where it is a direct consequence of the integral representation~\eqref{eq:G_def}), imply that it is sufficient to consider the functions
\begin{gather*}
\Icsv_{m_1,\ldots,m_k}(x_1,\ldots,x_k) := \Icsv\big(0;x_1,\{0\}^{m_1-1},\ldots,x_{k},\{0\}^{m_k-1};1\big) .
\end{gather*}
We will also use the objects
\begin{gather*}
I^{\bullet}_{m_1,\ldots,m_k}(x_1,\ldots,x_k) :=
I^{\bullet}\big(0;x_1,\{0\}^{m_1-1},\ldots,x_{k},\{0\}^{m_k-1};1\big),\qquad \bullet\in\{\mot,\dr\},
\\
\Isv_{m_1,\ldots,m_k}(x_1,\ldots,x_k) := \svmot\big(\Imot(0;x_1,\{0\}^{m_1-1},\ldots,x_{k},\{0\}^{m_k-1};1)\big).
\end{gather*}

%%%%%%%%%%%%%%%%
\subsection{Results in depth 1}

\begin{Proposition}\label{prop:Icsv_d1}
\begin{gather*}
\Icsv_n(x) = \mathfrak{R}_n\bigg[\Isv_n(x) + \frac{1}{n} \log|x|^2 \Isv_{n-1}(x)\bigg] ,
\end{gather*}
where we interpret $\Isv_n(x)=0$ when $n\le 0$.
\end{Proposition}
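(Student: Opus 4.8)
The plan is to start from the recursion for $C$ in Proposition~\ref{prop:recursion}, specialised to the depth-$1$ configuration $\Idr_n(x) = \Idr(0;x,\{0\}^{n-1};1)$, and then extract the real (resp.\ imaginary) part to obtain $\Icsv_n(x) = \mathfrak{R}_n[C(0;x,\{0\}^{n-1};1)]$. So first I would write down which pairs $(i,j)$ with $0\le i<j\le n$, $(i,j)\neq(0,n)$, contribute nontrivially in the sum on the right-hand side of~\eqref{eq:recursion}: the inner block $C(x_i;x_{i+1},\ldots,x_j;x_{j+1})$ must be a nonzero de Rham period, and the outer block $\Isv(x_0;x_1,\ldots,x_i,x_{j+1},\ldots,x_n;x_{n+1})$ must also be nonzero. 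Here the argument string is $(0;x,0,\ldots,0;1)$, so the only nonzero "sub-integrals" $C(x_i;\ldots;x_{j+1})$ are the ones that either equal $\log^{\dr}$ of something (length one, two equal-to-$0$ endpoints giving $0$, so actually length-one blocks $I^{\dr}(0;x;0)$, $I^{\dr}(0;0;1)$, etc., most of which vanish) or capture the single letter $x$. In fact the only inner blocks that survive are $I^{\dr}(x_{k-1};x_k;x_{k+1})$-type weight-one pieces together with the one genuinely weight-$(n-1)$ piece $I^{\dr}(0;x,\{0\}^{n-2};0)$ which vanishes because its endpoints coincide; a careful bookkeeping should leave essentially a single surviving family.

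The key computational step is therefore to identify exactly that surviving family. I expect the dominant contribution to come from splitting off the trailing block of zeros: taking $i=1$, $j=n$ is excluded only when $i=0$, so the piece with $i$ chosen so that $x_{i+1},\ldots,x_j$ consists purely of zeros contributes $I^{\dr}(x_i;\{0\}^{j-i};x_{j+1})$, which is a pure power of $\log^{\dr}$ by the leading/trailing-zeros identities; combined with the depth-$1$ outer integral $\Isv_{n-1}(x)$ this should yield the claimed $\tfrac1n\log|x|^2\,\Isv_{n-1}(x)$ term after applying $\sv$ (recall $\svmot(\log^{\mot} x)=\log|x|^2$) and the weight factor $(j-i)/n$. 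All other $(i,j)$ should either give a vanishing inner block (endpoints coincide, or a $\log^{\dr}$ of $1$) or a vanishing outer block, or cancel; alternatively one can invoke Proposition~\ref{prop:unshuffling} with $r=1$ to collapse leading zeros and reduce bookkeeping. Having isolated the two surviving terms $\Isv_n(x)$ and $-\tfrac1n\cdot(\text{sign})\cdot\log|x|^2\,\Isv_{n-1}(x)$, I would check the sign: the recursion carries an overall $-\tfrac1n$ and a factor $(j-i)$; with $j-i=1$ for the relevant weight-one split this gives $-\tfrac1n\log|x|^2\Isv_{n-1}(x)$ up to the orientation of the $\log$, and I would confirm it comes out $+\tfrac1n$ as stated by matching against, say, the $n=2$ case computed directly.

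Finally I would apply $\mathfrak{R}_n$ to both sides. Since $\mathfrak{R}_n$ is $\mathbb{R}$-linear and $\log|x|^2$ is real, $\mathfrak{R}_n\big[\tfrac1n\log|x|^2\,\Isv_{n-1}(x)\big] = \tfrac1n\log|x|^2\,\mathfrak{R}_n[\Isv_{n-1}(x)]$; here one must note that $\mathfrak{R}_n$ and $\mathfrak{R}_{n-1}$ have opposite parities (one is $\Re$, the other $\Im$), so the clean single-valued depth-$1$ function at weight $n-1$ is \emph{not} literally $\mathfrak{R}_{n-1}[\Isv_{n-1}]$ but rather $\mathfrak{R}_n[\Isv_{n-1}]$ — which is exactly why the proposition is phrased with $\mathfrak{R}_n$ applied to the bracket rather than term-by-term in terms of $\Icsv_{n-1}$. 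This is the one subtlety worth flagging: the parity switch means the formula cannot be rewritten as a clean recursion $\Icsv_n$ in terms of $\Icsv_{n-1}$, and this is why we leave it in the stated form. The main obstacle I anticipate is not conceptual but combinatorial: verifying that every $(i,j)$ other than the trailing-zeros split contributes zero (via coinciding endpoints, $\log^{\dr}(1)=0$, or pairwise cancellation in the style of the proof of the total-differential Proposition above), and I would handle this by the same consecutive-arguments case analysis used there, or more cleanly by first reducing via Proposition~\ref{prop:unshuffling} to the case with no interior zeros before the letter $x$.
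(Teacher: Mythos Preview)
Your approach is essentially the same as the paper's: both use the Dynkin recursion $D(x)=nx-m(\id\otimes D)\Delta'(x)$ (equivalently, the recursion of Proposition~\ref{prop:recursion}). The difference is purely one of bookkeeping. The paper exploits that $\Idr_n(x)=-\li_n^{\dr}(1/x)$ has the explicit coproduct~\eqref{eq:Delta_Li}, whose right-hand tensor factors are pure powers of $\log^{\dr}$; since $D$ annihilates $(\log^{\dr})^k$ for $k\ge 2$, only the $k=1$ term survives, and the result drops out in two lines. Your route via Proposition~\ref{prop:recursion} amounts to re-deriving that same coproduct term-by-term through the $(i,j)$-sum, which works but is longer.

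Your sketch, however, is too vague in places and contains some garbled statements. Concretely: the surviving contribution is exactly $(i,j)=(1,2)$, with inner block $C(x;0;x_3)=\sv(-\log^{\dr}x)=-\log|x|^2$ (regardless of whether $x_3=0$ or $x_3=1$) and outer block $\Isv_{n-1}(x)$, giving $-\tfrac{1}{n}\Isv_{n-1}(x)\cdot(-\log|x|^2)=+\tfrac{1}{n}\log|x|^2\,\Isv_{n-1}(x)$ with the correct sign, no need to ``check against $n=2$''. The remaining $(i,j)$ vanish for the reasons you allude to but do not pin down: for $i=0$ the inner block has coinciding endpoints $0$; for $i=1$, $j\ge 3$ the inner block is $\tfrac{1}{(j-1)!}(-\log^{\dr}x)^{j-1}$, a product killed by $\Pi$; for $i\ge 2$ the inner block is either $I^{\dr}(0;0^{j-i};0)=0$ or $I^{\dr}(0;0^{n-i};1)=\tfrac{1}{(n-i)!}(\log^{\dr}1)^{n-i}=0$. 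There is no ``pairwise cancellation'' needed, and no appeal to Proposition~\ref{prop:unshuffling} is necessary. Your sentence ``taking $i=1$, $j=n$ is excluded only when $i=0$'' is incoherent and should be dropped.
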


\begin{proof}
It follows from~\eqref{eq:id_S_D} that if $x\in \cPdrpl$ has weight $n$, we obtain the recursion
\begin{gather*}
D(x) = n x - m(\id\otimes D)\Delta'(x) .
\end{gather*}
Proposition~\ref{eq:Delta_Li} then implies
\begin{align}
\begin{split}
\Pi\big(\Idr_n(x)\big) & = -\Pi\bigg(\li^{\dr}_n\bigg(\frac1x\bigg)\bigg)
\\
& =-\li^{\dr}_n\bigg(\frac1x\bigg) + \frac{1}{n}\sum_{k=1}^{n-1}\frac{1}{k!} \li^{\dr}_{n-k}\bigg(\frac1x\bigg) D\bigg(\log^{\dr}\bigg(\frac1x\bigg)^k\bigg)
\\
& =\Idr_n(x)+\frac{1}{n} \log^{\dr}x \Idr_{n-1}(x) .
\end{split}\label{eqn:PiIn}
\end{align}
The claim follows upon acting with $\per\circ \sv$, and taking the real or imaginary part.
\end{proof}

The functions $\Icsv_n(x)$ are real-analytic and single-valued, and they reduce to (single-valued) zeta values for $x=1$ (cf.~\eqref{eq:sv_zeta}),
\begin{gather}
\begin{split}
&\Icsv_{2m}(1) = 0 ,\\
&\Icsv_{2m+1}(1) =-2 \zeta_{2m+1} .
\end{split}\label{eq:clean_zeta_d1}
\end{gather}
This is a special case of the following more general result:
\begin{Corollary}\label{prop:Icsv_root_unity}
Let $\xi_N={\rm e}^{2\pi {\rm i}/N}$, and let $n>1$ and $a$ be integers.
Then
\begin{gather*}
\Icsv_n\big(\xi_N^a\big) = 2 (-1)^n \Cl_n\bigg(\frac{2\pi a}{N}\bigg) ,
\end{gather*}
where $\Cl_n\!(\alpha) := \mathfrak{R}_n\big(\li_n\!({\rm e}^{i\alpha})\big)$ denotes the Clausen function. The same formula also holds for $n=1$ and $a \neq 0 \mod N$.
\end{Corollary}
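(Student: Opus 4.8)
The plan is to derive the formula from Proposition~\ref{prop:Icsv_d1} together with the explicit single-valued polylogarithm formula~\eqref{eq:svLi}, evaluated at a root of unity. First I would write $x = \xi_N^a$, so that $|x| = 1$ and hence $\log|x|^2 = 0$; Proposition~\ref{prop:Icsv_d1} then collapses to $\Icsv_n(\xi_N^a) = \mathfrak{R}_n\big[\Isv_n(\xi_N^a)\big]$, removing the product term entirely. So the whole problem reduces to understanding $\mathfrak{R}_n$ of the single-valued polylogarithm at a point on the unit circle.

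Next I would substitute into~\eqref{eq:svLi}. With $|x|^2 = 1$ only the $k=0$ term of the sum survives, giving $\Isv_n(\xi_N^a) = \li_n(\xi_N^a) - (-1)^n \li_n(\overline{\xi_N^a}) = \li_n(\xi_N^a) - (-1)^n \li_n(\xi_N^{-a})$. Now I apply $\mathfrak{R}_n$. For $n$ odd, $\mathfrak{R}_n = \Re$, and since $\overline{\li_n(z)} = \li_n(\overline z)$ on the principal branch (or rather: the power series has real coefficients, so $\Re\li_n(\xi_N^{-a}) = \Re\li_n(\xi_N^{a})$), one gets $\Re\big[\li_n(\xi_N^a) + \li_n(\xi_N^{-a})\big] = 2\Re\li_n(\xi_N^a) = 2\Cl_n(2\pi a/N)$, matching $2(-1)^n\Cl_n$ because $(-1)^n = -1$... here I need to be careful with the sign, so let me track it: for $n$ odd, $-(-1)^n = +1$, giving $\Re[\li_n(\xi_N^a) + \li_n(\xi_N^{-a})] = 2\Re\li_n(\xi_N^a)$, and since $\Cl_n(\alpha) = \Re\li_n(e^{i\alpha})$ for odd $n$ and $(-1)^n\Cl_n = -\Cl_n$, I should double-check the claimed sign; likely the statement's $(-1)^n$ combines with a sign coming from comparing $\Cl_n(2\pi a/N)$ to $\Re\li_n(\xi_N^a)$ when one uses the convention $\li_n(\overline z) = \overline{\li_n(z)}$ and the parity of the Clausen function. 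For $n$ even, $\mathfrak{R}_n = \Im$, and $-(-1)^n = -1$, giving $\Im\big[\li_n(\xi_N^a) - \li_n(\xi_N^{-a})\big] = 2\Im\li_n(\xi_N^a) = 2\Cl_n(2\pi a/N)$, and here $(-1)^n = +1$ so the formula reads $2\Cl_n$ as well.

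The verification for $n=1$, $a \not\equiv 0 \bmod N$: here $\Icsv_1(x) = \mathfrak{R}_1\big[\Isv_1(x)\big] = \Re\big[-\log(1-x) - \log(1-\overline x)\big] = -2\Re\log(1-\xi_N^a) = -2\log|1-\xi_N^a|$, which should equal $2(-1)^1\Cl_1(2\pi a/N) = -2\Cl_1(2\pi a/N)$; and indeed $\Cl_1(\alpha) = -\log|2\sin(\alpha/2)| = \Re\li_1(e^{i\alpha})$ up to the standard normalization, with the restriction $a \not\equiv 0$ ensuring we avoid the logarithmic singularity. The weight-one case needs the separate mention of $\Isv_1(x) = -\log(1-x) - \log(1-\bar x)$, which follows from~\eqref{eq:svLi} at $n=1$ using $\li_1(x) = -\log(1-x)$ and $-(-1)^1 = 1$.

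The main obstacle is purely bookkeeping: getting all the signs straight between the $(-1)^n$ in~\eqref{eq:svLi}, the parity-dependent choice of $\Re$ versus $\Im$ encoded in $\mathfrak{R}_n$, and the convention for $\Cl_n$ (and in particular whether $\Cl_n(\alpha)$ is defined as $\Re\li_n(e^{i\alpha})$ or $\Im\li_n(e^{i\alpha})$ according to parity, which is exactly what the definition $\Cl_n(\alpha) := \mathfrak{R}_n(\li_n(e^{i\alpha}))$ in the statement fixes). Once one notes that $\mathfrak{R}_n\big[\li_n(z) - (-1)^n\li_n(\bar z)\big] = 2\mathfrak{R}_n[\li_n(z)]$ — because $\li_n(\bar z) = \overline{\li_n(z)}$ makes the combination either $2\Re$ (odd $n$) or $2\Im$ (even $n$) automatically — the identity falls out immediately, and the $(-1)^n$ in the final answer is then whatever sign reconciles the chosen branch/conjugation convention with the definition of $\Cl_n$. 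I would state this conjugation symmetry of $\li_n$ on $|z|=1$ explicitly and then the rest is a one-line computation.
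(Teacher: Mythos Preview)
Your overall strategy is the same as the paper's: use Proposition~\ref{prop:Icsv_d1} at $|x|=1$ to drop the $\log|x|^2$ term, then evaluate the single-valued polylogarithm via~\eqref{eq:svLi} and take $\mathfrak{R}_n$. However, there is a concrete error that causes the sign confusion you noticed but did not resolve.

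The issue is the identification of $\Isv_n(x)$ with the right-hand side of~\eqref{eq:svLi}. Equation~\eqref{eq:svLi} computes $\svmot\big(\li_n^{\mot}(x)\big)$, not $\Isv_n(x)$. By definition $I_n(x)=I\big(0;x,\{0\}^{n-1};1\big)=-\li_n(1/x)$, so
\[
\Isv_n(x) \;=\; -\,\svmot\big(\li_n^{\mot}(1/x)\big),
\]
and at $x=\xi_N^a$ this gives
\[
\Isv_n\big(\xi_N^a\big) \;=\; -\Big[\li_n\big(\xi_N^{-a}\big) - (-1)^n\li_n\big(\xi_N^{a}\big)\Big],
\]
not the expression $\li_n(\xi_N^a)-(-1)^n\li_n(\xi_N^{-a})$ you wrote. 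The two differ precisely by a sign when $n$ is odd, which is exactly the missing $(-1)^n$ you could not account for. The paper's proof makes this step explicit (writing $\Icsv_n(\xi_N^a)=-\mathfrak{R}_n\big[\svmot(\li_n(\xi_N^{-a}))\big]$) and the sign then comes out cleanly as $2(-1)^n\Cl_n(2\pi a/N)$.

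The same slip contaminates your $n=1$ check: your computation gives $-2\log|1-\xi_N^a|$, but with $\Cl_1(\alpha)=-\log|2\sin(\alpha/2)|$ the target $-2\Cl_1(2\pi a/N)$ equals $+2\log|1-\xi_N^a|$; these do not match, and ``up to the standard normalization'' does not fix it. Once you insert the correct relation $\Isv_n(x)=-\svmot(\li_n^{\mot}(1/x))$, every sign in your argument lines up without any hand-waving.
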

\begin{proof}
From Propositions~\ref{prop:Icsv_d1} and~\ref{prop:svLi}, we obtain
\begin{align*}
\Icsv_n\big(\xi_N^a\big) & = -\mathfrak{R}_n\big[\svmot\big(\li_n\big(\xi_N^{-a}\big)\big)\big]
\\
& = -\mathfrak{R}_n\big[\li_n\big(\xi_N^{-a}\big) - (-1)^n\li_n\big(\xi_N^{a}\big)\big]
\\
& = 2 (-1)^n \mathfrak{R}_n\big[\li_n\big(\xi_N^{a}\big)\big]
\\
& = 2 (-1)^n \Cl_n\!\bigg(\frac{2\pi a}{N}\bigg) . \tag*{\qed}
\end{align*}
\renewcommand{\qed}{}
\end{proof}

$\Icsv_2(x)$ satisfies a clean version of the five-term relation, and $\Icsv_n(x)$ satisfies for all $n>1$ the inversion relation
\begin{gather*}
\Icsv_n\bigg(\frac1x\bigg) = (-1)^{n+1} \Icsv_n(x) .
\end{gather*}
We see that the functions $\Icsv_n(x)$ have the same properties and satisfy the same identities as the Zagier's single-valued versions of the classical polylogarithms $P_n(x)$ defined in~\eqref{eq:P_def}. However, the two families of functions are not identical, but we have the relation:

\begin{Corollary}\label{cor:Don}
For any $x\in\mathbb{C}\setminus\{0,1\}$ and any $n\in\mathbb{Z}_{>1}$, we have
\begin{gather*}
\Icsv_n(x) =
2 (-1)^n \Bigg[P_n(x) +\frac{1}{n}\sum_{k=1}^{\lceil n/2\rceil-1}(n-2k-1) \frac{\log^{2k}|x|^2}{(2k+1)!} P_{n-2k}(x)\Bigg].
\end{gather*}
Moreover, $\Icsv_1(x)= -2 P_1(x) - \log|x|^2$.
\end{Corollary}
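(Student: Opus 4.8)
The plan is to combine the depth-one formula for $\Icsv_n$ from Proposition~\ref{prop:Icsv_d1} with the explicit expression for $\svmot\big(\li_n^{\mot}(x)\big)$ in terms of Zagier's $\cP_n$ provided by Proposition~\ref{prop:svLi}, and then simplify the resulting double sum of logarithmic prefactors. First I would use the fact that $\Isv_n(x) = \svmot\big(\li_n^{\mot}(x)\big)$ (up to the sign coming from $\li_n^{\mot}(x) = -\Imot(0;1,\{0\}^{n-1};x)$, so that $\Isv_n(x) = -\svmot(\li_n^{\mot}(x))$ in the normalisation of Section~\ref{sec:examples}) to rewrite
\begin{gather*}
\Icsv_n(x) = -\mathfrak{R}_n\Bigg[\svmot\big(\li_n^{\mot}(x)\big) + \frac{1}{n}\log|x|^2\, \svmot\big(\li_{n-1}^{\mot}(x)\big)\Bigg].
\end{gather*}
Then by Proposition~\ref{prop:svLi}, $\svmot\big(\li_m^{\mot}(x)\big) = \sum_{k=0}^{m-1} \frac{\log^k|x|^2}{(k+1)!}\cP_{m-k}(x)$, so substituting $m = n$ and $m = n-1$ gives a sum over $\cP_{n-j}(x)$ with explicit rational coefficients times powers of $L := \log|x|^2$.

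The key computational step is to collect, for each fixed $j$, the coefficient of $L^{j}\,\cP_{n-j}(x)$. From the $\svmot(\li_n^{\mot})$ term it is $\frac{1}{(j+1)!}$; from the $\frac{1}{n}L\cdot\svmot(\li_{n-1}^{\mot})$ term it is $\frac{1}{n}\cdot\frac{1}{j!}$ (taking the index $k = j-1$ in that sum), valid for $j \geq 1$. Hence the total coefficient of $L^j \cP_{n-j}(x)$ is
\begin{gather*}
\frac{1}{(j+1)!} + \frac{1}{n\,j!} = \frac{1}{(j+1)!}\Bigl(1 + \frac{j+1}{n}\Bigr) = \frac{n+j+1}{n\,(j+1)!},
\end{gather*}
for $j \geq 1$, while for $j = 0$ it is just $1$. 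Next I would convert $\cP_{n-j}(x)$ back to $P_{n-j}(x)$ using $\cP_m(x) = 2P_m(x)$ for $m$ odd and $\cP_m(x) = 2{\rm i}\,P_m(x)$ for $m$ even, and apply $\mathfrak{R}_n$. The point is that $\mathfrak{R}_n$ picks out exactly those $\cP_{n-j}$ whose weight $n-j$ has the same parity as $n$, i.e.\ $j$ even; for those, $\mathfrak{R}_n(\cP_{n-j}(x))$ equals $2 P_{n-j}(x)$ up to a uniform sign depending only on $n \bmod 4$, which after bookkeeping produces the global factor $2(-1)^n$ and kills all odd-$j$ terms. Writing $j = 2k$, the surviving coefficient $\frac{n+2k+1}{n(2k+1)!}$ must be matched against the claimed $\frac{n-2k-1}{n(2k+1)!}$; the discrepancy in sign between $n+2k+1$ and $n-2k-1$ should be absorbed by the inversion relation $P_m(1/x) = (-1)^{m+1}P_m(x)$ together with the fact that $\Idr_n(x)$ in \eqref{eqn:PiIn} is naturally expressed through $\li^{\dr}_n(1/x)$ rather than $\li^{\dr}_n(x)$ — so I would be careful to track whether the $\Isv_n$ appearing really is $\svmot(\li_n^{\mot}(x))$ or $\svmot(\li_n^{\mot}(1/x))$, and use $P_n(1/x) = (-1)^{n+1}P_n(x)$ and $\log|1/x|^2 = -\log|x|^2$ to reconcile the two. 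Finally, the range of $k$ is $1 \leq k \leq \lceil n/2\rceil - 1$: the upper limit is the largest $k$ with $n - 2k \geq 2$ (for $n - 2k = 1$ one could still have a $P_1$ term, but $P_1$ only enters the separately-stated weight-one case, and for even $n$ the weight-one term has the wrong parity and is annihilated anyway), and the $k=0$ term is precisely the leading $2(-1)^n P_n(x)$ pulled out front.

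The main obstacle I anticipate is purely the sign and normalisation bookkeeping: there are at least three places where a sign can slip — the $-1$ from $\li_n^{\mot}(x) = -\Imot(0;1,\{0\}^{n-1};x)$, the $(-1)^n$ and parity selection inside $\mathfrak{R}_n$ acting on $\cP_{n-j}$, and the inversion $x \mapsto 1/x$ implicit in \eqref{eqn:PiIn}. The cleanest way to control these is probably to verify the formula directly for $n = 2$ and $n = 3$ against the known explicit expressions for $\Icsv_2, \Icsv_3$ (and note consistency with $\Icsv_n(1) = -2\zeta_n$ for odd $n$, since $P_n(1) = \zeta_n$ and all the $L^{2k}$ terms vanish at $x=1$), and then run the general argument with signs pinned down by those base cases. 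The separate weight-one statement $\Icsv_1(x) = -2P_1(x) - \log|x|^2$ follows immediately from Proposition~\ref{prop:Icsv_d1} with $n=1$, since $\Isv_0 \equiv 1$ (the empty integral) and $\Isv_1(x) = \svmot(\li_1^{\mot}(1/x))$ relates to $P_1$ plus a logarithm by the explicit weight-one single-valued formula; I would just record this computation explicitly rather than deriving it from the general formula.
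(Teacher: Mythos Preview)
Your approach is exactly the paper's: substitute Proposition~\ref{prop:svLi} into Proposition~\ref{prop:Icsv_d1} and collect coefficients. The paper's proof is in fact a single sentence (``inject Proposition~\ref{prop:svLi} into $\Isv_n(x)=-\svmot(\li_n^{\mot}(1/x))$ from Proposition~\ref{prop:Icsv_d1}''), so you have already supplied more detail than they do; in particular your suspicion that the mismatch $(n+2k+1)$ versus $(n-2k-1)$ is cured by working with $1/x$ rather than $x$ is precisely right---carrying $\log|1/x|^2=-L$ through flips the sign of the $\frac{1}{n\,j!}$ contribution, and the problematic $j=n-1$ term (where the inversion for $P_1$ fails) drops out because its coefficient $\frac{n-j-1}{n(j+1)!}$ vanishes there. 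One small correction: your claim that $\mathfrak R_n(\cP_{n-j})$ carries a sign depending on $n\bmod 4$ is not quite right---for even $j$ one simply gets $\mathfrak R_n(\cP_{n-j}(x))=2P_{n-j}(x)$ with no extra sign, so the global $(-1)^n$ comes entirely from the $(-L)^j(-1)^{n-j}=(-1)^nL^j$ factor.
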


\begin{proof}
It suffices to inject the result of Proposition~\ref{prop:svLi} into the expression for $\Isv_n(x) = -\svmot(\li^{\mot}_n(1/x))$ from Proposition~\ref{prop:Icsv_d1}.
\end{proof}

\begin{Remark}
Corollary~\ref{cor:Don} shows that, for classical polylogarithms (i.e., in depth~1), it is possible to define at least two distinct real-analytic single-valued analogues that satisfy clean functional relations. It would be an interesting question to investigate whether alternative definitions are also possible in higher depth. As a starting point it could be interesting to clarify the relationship between our construction of clean single-valued MPL's and the Lie-period map defined in \cite[Section~2.5]{goncharov2016exponential}. {We are grateful to Cl\'ement Dupont for this remark.}
\end{Remark}

\subsection{Results in depth 2}

\begin{Proposition}
For $(x_1,x_2) \in \mathbb{C}^2\setminus \{(x,y)\colon x\neq 0,1, y\neq 0,1,x\}$, $m_1,m_2\in\mathbb{Z}_{>0}$, and $n=m_1+m_2$, we have
\begin{gather}
\begin{split}
&\Icsv_{m_1,m_2}(x_1,x_2)
= \mathfrak{R}_n\big[\Isv_{m_1,m_2}(x_1,x_2)\big] - \frac{1}{n}\mathfrak{R}_n\Bigg\{m_1 \Isv_{m_1}\bigg(\frac{x_1}{x_2}\bigg) \Isv_{m_2}(x_2)
\\ &\quad\hphantom{=}
 {}- \log|x_2|^2 \Isv_{m_1,m_2-1}(x_1,x_2) + \log\bigg|\frac{x_2}{x_1}\bigg|^2 \bigg[\Isv_{m_1-1,m_2}(x_1,x_2)-\Isv_{m_1-1}\bigg(\frac{x_1}{x_2}\bigg) \Isv_{m_2}(x_2)\bigg]
 \\ &\quad\hphantom{=}
{} +\sum_{r=1}^{m_2}(-1)^{m_1}\binom{n-r-1}{m_1-1} \Isv_{r}(x_1) \bigg[(n-r) \Isv_{n-r}\bigg(\frac{x_2}{x_1}\bigg)+\log\bigg|\frac{x_2}{x_1}\bigg|^2 \Isv_{n-r-1}\bigg(\frac{x_2}{x_1}\bigg)\bigg]
 \\ &\quad\hphantom{=}
{} +\sum_{r=1}^{m_1}(-1)^{m_1-r}\binom{n-r-1}{m_2-1} \Isv_{r}(x_1) \big[(n-r) \Isv_{n-r}({x_2})+\log|{x_2}|^2 \Isv_{n-r-1}({x_2})\big]\Bigg\} ,
\end{split}
\label{eq:depth2}
\end{gather}
where we interpret $\Isv_{m_1}(x_1)=0$ and $\Isv_{m_1,m_2}(x_1,x_2) = 0$ whenever $m_1$ or $m_2$ are negative.
\end{Proposition}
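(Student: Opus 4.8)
The plan is to imitate the proof of Proposition~\ref{prop:Icsv_d1}: apply the recursion of Proposition~\ref{prop:recursion} to $\Idr\big(0;x_1,\{0\}^{m_1-1},x_2,\{0\}^{m_2-1};1\big)$ and take $\mathfrak{R}_n$, using that, by Definition~\ref{def:Icsv}, $\Icsv_{m_1,m_2}(x_1,x_2)=\mathfrak{R}_n\big[C\big(0;x_1,\{0\}^{m_1-1},x_2,\{0\}^{m_2-1};1\big)\big]$. Since $C$ factors through the product-annihilating projector $\Pi$ (Proposition~\ref{prop:ker}), it satisfies \emph{clean} versions of all the standard relations among iterated integrals — exactly as in Theorem~\ref{thm:main} and Propositions~\ref{prop:reversal} and~\ref{prop:unshuffling} — namely: two-term path composition $C(x_0;w;x_{n+1})=C(x_0;w;x)+C(x;w;x_{n+1})$, path reversal, scaling invariance, and the leading-zero formula. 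I will use repeatedly that $C(a;w;a)=0$ for any nonempty word $w$ (take the intermediate point to be $a$ in path composition), that $C(a;\{0\}^k;b)=0$ for $k\ge2$ and $C(a;0;b)=\Isv(a;0;b)=\log|b/a|^2$ (with the tangential-base-point convention that a term $\log|0|^2$ is dropped), and that $\Isv\big(0;\{0\}^k;1\big)=0$ for $k\ge1$.

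Writing $n=m_1+m_2$ and $a_0=0$, $a_1=x_1$, $a_2=\cdots=a_{m_1}=0$, $a_{m_1+1}=x_2$, $a_{m_1+2}=\cdots=a_n=0$, $a_{n+1}=1$, Proposition~\ref{prop:recursion} writes $C(0;a_1,\dots,a_n;1)$ as $\Isv_{m_1,m_2}(x_1,x_2)$ minus $\frac1n$ times a sum of terms $(j-i)\,\Isv(0;a_1,\dots,a_i,a_{j+1},\dots,a_n;1)\,C(a_i;a_{i+1},\dots,a_j;a_{j+1})$ over $0\le i<j\le n$ with $(i,j)\neq(0,n)$. First I would discard the vanishing summands: the \emph{outer} factor is zero exactly when $i=0$ and $j\ge m_1+1$ (it then equals $\Isv\big(0;\{0\}^{n-j};1\big)$), and this is precisely the case in which the \emph{inner} factor $C$ would be a genuine depth-$2$ object, so no depth-$2$ value of $C$ is ever needed; the inner factor is zero whenever its two endpoints coincide, or whenever it is $C(a;\{0\}^k;b)$ with $k\ge2$.

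It then remains to enumerate the surviving pairs and reduce each inner $C$ to depth $\leq1$ standard form. \textbf{(i)} The pair $(0,m_1)$: inner $C(0;x_1,\{0\}^{m_1-1};x_2)$, outer $\Isv_{m_2}(x_2)$; scaling by $x_2^{-1}$ and \eqref{eqn:PiIn} turn the inner into $\Isv_{m_1}(x_1/x_2)+\frac1{m_1}\log|x_1/x_2|^2\,\Isv_{m_1-1}(x_1/x_2)$, which produces the first line of \eqref{eq:depth2} and the product term $-\Isv_{m_1-1}(x_1/x_2)\Isv_{m_2}(x_2)$ of the third line. \textbf{(ii)} The single-zero removals $(i,i+1)$ with $i+1$ in the first block of zeros: only $i=1$ and $i=m_1-1$ survive (coinciding when $m_1=2$), both with outer $\Isv_{m_1-1,m_2}(x_1,x_2)$ and inners $\Isv(x_1;0;0)=-\log|x_1|^2$ and $\Isv(0;0;x_2)=\log|x_2|^2$, which combine into the $\log|x_2/x_1|^2\,\Isv_{m_1-1,m_2}$ term. \textbf{(iii)} The single-zero removals with $i+1$ in the second block: only $i=m_1+1$ survives, with outer $\Isv_{m_1,m_2-1}(x_1,x_2)$ and inner $-\log|x_2|^2$, giving the second-line term. \textbf{(iv)} The pairs $(r,n)$ with $2\le r\le m_1$: outer $\Isv_r(x_1)$, inner $C(0;\{0\}^{m_1-r},x_2,\{0\}^{m_2-1};1)$, which by Proposition~\ref{prop:unshuffling} and \eqref{eqn:PiIn} equals $(-1)^{m_1-r}\binom{n-r-1}{m_1-r}\big[\Isv_{n-r}(x_2)+\frac1{n-r}\log|x_2|^2\,\Isv_{n-r-1}(x_2)\big]$; using $\binom{n-r-1}{m_1-r}=\binom{n-r-1}{m_2-1}$ this is the $r\ge2$ part of the last line. \textbf{(v)} The pairs $(1,j)$ with $m_1+1\le j\le n-1$: outer $\Isv_{n-j+1}(x_1)$, inner $C(x_1;\{0\}^{m_1-1},x_2,\{0\}^{j-m_1-1};0)$, which is reduced by path reversal, scaling by $x_1^{-1}$, Proposition~\ref{prop:unshuffling} and \eqref{eqn:PiIn}; reindexing $r=n-j+1$ gives the $r\ge2$ part of the fourth line. \textbf{(vi)} The pair $(1,n)$: outer $\Isv_1(x_1)$, inner $C(x_1;\{0\}^{m_1-1},x_2,\{0\}^{m_2-1};1)$, split by two-term path composition at $0$ and then treated as in (iv) and (v); this gives exactly the $r=1$ terms of the fourth and fifth lines. (When $m_1$ or $m_2$ equals $1$ the corresponding block is empty and some of these cases degenerate, but the analysis is unchanged.) Summing all contributions, taking $\mathfrak{R}_n$, and using the conventions for negative indices yields \eqref{eq:depth2}.

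The main obstacle will be the bookkeeping rather than any conceptual point: everything is driven by Proposition~\ref{prop:recursion} together with the clean properties of $C$, but matching every coefficient — the signs produced by path reversal, the binomials coming from unshuffling leading zeros (and the identity $\binom{n-r-1}{m_1-r}=\binom{n-r-1}{m_2-1}$), and the boundary cases $m_1,m_2\in\{1,2\}$ together with the $r=1$ terms that surface only from the pair $(1,n)$ after a path-composition split — requires careful and somewhat lengthy case analysis.
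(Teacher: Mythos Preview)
Your proof is correct and follows essentially the same approach as the paper: both specialise the recursion of Proposition~\ref{prop:recursion} to the depth-two word, enumerate the surviving pairs $(i,j)$, and reduce each inner $C$-factor via path reversal, scaling, unshuffling of leading zeros, and the depth-one formula~\eqref{eqn:PiIn}. Your cases (i)--(vi) correspond exactly to the paper's six cases (in a slightly different order and indexed by $(i,j)$ rather than by the shape of the inner segment), including the handling of the pair $(1,n)$ via a path-composition split at~$0$.
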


\begin{proof}
The recursion~\eqref{eq:recursion} holds for arbitrary values of the $x_i$. We can now specialise to the case of depth two with
\begin{gather*}
(x_0;x_{1},\ldots,x_{n};x_{n+1}) = \big(0;y_1,\{0\}^{m_1-1},y_2,\{0\}^{m_2-1};1\big) .
\end{gather*}
It is easy to check that in that case \eqref{eq:recursion} substantially simplifies, and only those terms contribute where $(x_i;x_{i+1},\ldots,x_{j};x_{j+1})$ takes one of the following values:
\begin{gather*}
\big(0;y_1,\{0\}^{m_1-1};y_2\big) ,\qquad (y_1;0;0) ,\qquad (0;0;y_2) ,\qquad (y_2;0;0) ,
\\
\big(y_1;\{0\}^{m_1-1},y_2,\{0\}^{\alpha};0\big) ,\qquad 0\le \alpha < m_2-1 ,
\\
\big(0;\{0\}^{\beta},y_2,\{0\}^{m_1-1};1\big) ,\qquad 0\le \beta < m_1-1 ,
\\
\big(y_1;\{0\}^{m_1-1},y_2,\{0\}^{m_1-1};1\big) .
\end{gather*}
We now go through each of these cases in turn.

\medskip
\noindent
\emph{Case} 1: $(x_i;x_{i+1},\ldots,x_{j};x_{j+1}) = \big(0;y_1,\{0\}^{m_1-1};y_2\big)$.
The corresponding term in the sum in~\eqref{eq:recursion} is
\begin{gather*}
m_1 \Isv\big(0;y_2,\{0\}^{m_2-1};1\big) C\big(0;y_1,\{0\}^{m_1-1};y_2\big) = m_1 \Isv_{m_2}(y_2) C\bigg(0;\frac{y_1}{y_2},\{0\}^{m_1-1};1\bigg) .
\end{gather*}
Using the same argument as in Proposition~\ref{prop:Icsv_d1}, we find
\begin{gather}\label{eq:C_Isv}
C\big(0;y,\{0\}^{m_1-1};1\big) = \Isv_{m_1}(y) + \frac{1}{m_1} \log|y|^2 \Isv_{m_1-1}(y) .
\end{gather}
Hence, we have
\begin{gather*}
m_1 \Isv_{m_2}(y_2) C\bigg(0;\frac{y_1}{y_2},\{0\}^{m_1-1};1\bigg)
 =m_1 \Isv_{m_2}(y_2) \Isv_{m_1}\bigg(\frac{y_1}{y_2}\bigg) + \log\bigg|\frac{y_1}{y_2}\bigg|^2 \Isv_{m_2}(y_2) \Isv_{m_1-1}\bigg(\frac{y_1}{y_2}\bigg) .
\end{gather*}
These match precisely the first and fourth terms in square brackets in~\eqref{eq:depth2}.

\medskip
\noindent
\emph{Case} 2: $(x_i;x_{i+1},\ldots,x_{j};x_{j+1}) = (y_1;0;0)$ or $(0;0;y_2)$.
The sum of these two contributions~is
\begin{gather*}
\Isv\big(0;y_1,\{0\}^{m_1-2},y_2,\{0\}^{m_1-1};1\big) C(y_1;0;0)
 +\Isv\big(0;y_1,\{0\}^{m_1-2},y_2,\{0\}^{m_1-1};1\big) C(0;0;y_2)
 \\ \qquad
{} =-\log|y_1|^2 \Isv_{m_1-1,m_2}(y_1,y_2) + \log|y_2|^2 \Isv_{m_1-1,m_2}(y_1,y_2)
\\ \qquad
{} =\log\bigg|\frac{y_2}{y_1}\bigg|^2 \Isv_{m_1-1,m_2}(y_1,y_2) .
\end{gather*}
This matches the third term in~\eqref{eq:depth2}.

\medskip
\noindent
\emph{Case} 3: $(x_i;x_{i+1},\ldots,x_{j};x_{j+1}) = (y_2;0;0)$. We get
\begin{gather*}
\Isv\big(0;y_1,\{0\}^{m_1-1},y_2,\{0\}^{m_1-2};1\big) C(y_2;0;0)
=-\log|{y_2}|^2 \Isv_{m_1,m_2-1}(y_1,y_2) .
\end{gather*}
This matches the second term in~\eqref{eq:depth2}.

\medskip
\noindent
\emph{Case} 4: $(x_i;x_{i+1},\ldots,x_{j};x_{j+1}) = \big(y_1;\{0\}^{m_1-1},y_2,\{0\}^{\alpha};0\big)$, $0\le \alpha < m_2-1$. We sum up the contributions for different values of $\alpha$ to get
\begin{gather*}
\sum_{\alpha=0}^{m_2-1}(m_1+\alpha) \Isv\big(0;y_1,\{0\}^{m_2-1-\alpha};1\big) C\big(y_1;\{0\}^{m_1-1},y_2,\{0\}^{\alpha};0\big)
\\ \qquad
{} =\sum_{\alpha=0}^{m_2-1}(-1)^{m_1+\alpha} (m_1+\alpha) \Isv_{m_1-\alpha}(y_1) C\big(0;\{0\}^{\alpha},y_2,\{0\}^{m_1-1};y_1\big) .
\end{gather*}
To proceed, we note that $C\big(0;\{0\}^{\alpha},y_2,\{0\}^{m_1-1};y_1\big)$ satisfies Proposition~\ref{prop:unshuffling} with $\Icsv\to C$ ($\Icsv$ is obtained from $C$ by taking the real or imaginary part). Hence
\begin{gather*}
\sum_{\alpha=0}^{m_2-1}(-1)^{m_1+\alpha} (m_1+\alpha) \Isv_{m_1-\alpha}(y_1) C\big(0;\{0\}^{\alpha},y_2,\{0\}^{m_1-1};y_1\big)
\\ \qquad
{} =\sum_{\alpha=0}^{m_2-1}(-1)^{m_1} (m_1+\alpha) \binom{m_1+\alpha-1}{\alpha}\Isv_{m_1-\alpha}(y_1) C\big(0;y_2,\{0\}^{m_1+\alpha-1};y_1\big) .
\end{gather*}
After using~\eqref{eq:C_Isv} and reindexing the sum via $\alpha = m_2-r$, this matches all the terms in the first sum, except for the term $r=1$.

\medskip
\noindent
\emph{Case} 5: $(x_i;x_{i+1},\ldots,x_{j};x_{j+1}) = \big(0;\{0\}^{\beta},y_2,\{0\}^{m_2-1};0\big)$, $0\le \beta < m_1-1$. We proceed in the same way as for Case 4. We find
\begin{gather*}
\sum_{\beta=0}^{m_1-1}(n-\beta-1) \Isv\big(0;y_1,\{0\}^{\beta};1\big)
\\ \qquad
{} =\sum_{\beta=0}^{m_1-1}(-1)^{m_1} (n-\beta-1) \binom{n-\beta-2}{m_1-\beta-1}\Isv_{\beta+1}(y_1) C\big(0;y_2,\{0\}^{n-\beta-2};1\big) .
\end{gather*}
After using~\eqref{eq:C_Isv} and reindexing the sum via $\beta=r-1$, this matches all the terms in the second sum, except for the term with $r=1$.

\medskip
\noindent
\emph{Case} 6: $(x_i;x_{i+1},\ldots,x_{j};x_{j+1}) = \big(y_1;\{0\}^{m_1-1},y_2,\{0\}^{m_2-1};1\big)$. We find, using the path composition and reversal formulas
\begin{gather*}
(n-1) \Isv_{1}(y_1) C\big(y_1;\{0\}^{m_1-1},y_2,\{0\}^{m_2-1};1\big)
\\ \qquad
{} = (n\!-\!1) \Isv_{1}(y_1) \big[C\big(0;\{0\}^{m_1-1},y_2,\{0\}^{m_2-1};1\big)
 \!-\! (-1)^n C\big(0;\{0\}^{m_2-1},y_2,\{0\}^{m_1-1};y_1\big)\big]
 \\ \qquad
 {}= (n-1) \Isv_{1}(y_1) \Bigg[(-1)^{m_1-1} \binom{n-2}{m_1-1} C\big(0;y_2,\{0\}^{n-2};1\big)
 \\ \qquad\hphantom{=(n-1) \Isv_{1}(y_1) \Bigg[}
 {}- (-1)^{m_1} \binom{n-2}{m_2-1} C\bigg(0;\frac{y_2}{y_1},\{0\}^{n-2};1\bigg)\Bigg] .
\end{gather*}
Using~\eqref{eq:C_Isv} we recover the terms with $r=1$ in each of the two sums.
\end{proof}

The functions $\Icsv_{m_1,m_2}(x_1,x_2)$ are real-analytic single-valued functions. For $x_1=x_2=1$, they reduce to zeta values:
\begin{Proposition}\label{prop:depth2_at_1}
For $m_1,m_2>0$, we have
\begin{gather*}
\Icsv_{m_1,m_2}(1,1) = \begin{cases}
c_{m_1m_2} \zeta_{m_1+m_2}, & m_1+m_2\ \text{odd},
\\
0 , & m_1+m_2\ \text{even},
\end{cases}
\end{gather*}
with
\begin{gather*}
c_{m_1m_2} = (-1)^{m_2} \binom{m_1+m_2}{m_1}-1 .
\end{gather*}
\end{Proposition}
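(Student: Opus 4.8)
The plan is to specialise the depth-$2$ formula \eqref{eq:depth2} to $x_1 = x_2 = 1$ and identify which single-valued building blocks survive. First I would record the relevant special values: by Proposition~\ref{prop:Icsv_d1} and equation~\eqref{eq:sv_zeta}, the depth-one single-valued polylogarithm at $x=1$ is $\Isv_m(1) = -\svmot(\li_m^{\mot}(1))$, which equals $-2\zeta_m$ for $m$ odd (and $m>1$) and $0$ for $m$ even, with $\Isv_1(1) = 0$ as well (the logarithmic piece in Proposition~\ref{prop:Icsv_d1} also vanishes since $\log|1|^2 = 0$). Similarly I need the depth-two value $\Isv_{m_1,m_2}(1,1) = \svmot(\Imot(0;1,\{0\}^{m_1-1},1,\{0\}^{m_2-1};1))$; this is Brown's single-valued double zeta value, and by the results cited in Section~\ref{sec:sv} (together with the parity/$F_\infty$ behaviour of Proposition~\ref{prop:Fsv}) one knows that $\svmot$ of a motivic MZV of even weight vanishes, so $\Isv_{m_1,m_2}(1,1) = 0$ when $m_1 + m_2$ is even.

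Next I would plug $x_1 = x_2 = 1$ into \eqref{eq:depth2}. All the $\log|\cdot|^2$ prefactors become $\log 1 = 0$, so the second, third and fourth terms in the big brace drop out entirely, as do the $\log|\cdot|^2\,\Isv_{n-r-1}$ pieces inside the two sums. What remains is
\begin{gather*}
\Icsv_{m_1,m_2}(1,1) = \mathfrak{R}_n\big[\Isv_{m_1,m_2}(1,1)\big] - \tfrac{1}{n}\mathfrak{R}_n\Bigg\{ m_1\,\Isv_{m_1}(1)\,\Isv_{m_2}(1)
\\
{}+ \sum_{r=1}^{m_2}(-1)^{m_1}\binom{n-r-1}{m_1-1}(n-r)\,\Isv_r(1)\,\Isv_{n-r}(1)
 + \sum_{r=1}^{m_1}(-1)^{m_1-r}\binom{n-r-1}{m_2-1}(n-r)\,\Isv_r(1)\,\Isv_{n-r}(1)\Bigg\}.
\end{gather*}
When $n = m_1+m_2$ is even, every term vanishes: $\Isv_{m_1,m_2}(1,1)=0$ as noted, and in each product $\Isv_r(1)\Isv_{n-r}(1)$ the two indices $r$ and $n-r$ have the same parity, so either both are even (giving $0$) or both are odd — but then $\mathfrak{R}_n = \mathfrak{R}_{\text{even}} = \Im$ applied to a real number ($\Isv_r(1), \Isv_{n-r}(1) \in \mathbb{R}$) gives $0$. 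When $n$ is odd, $\Isv_{m_1,m_2}(1,1)$ is real and $\mathfrak{R}_n = \Re$ acts trivially; moreover in each product exactly one of $r$, $n-r$ is odd and the other even, so $\Isv_r(1)\Isv_{n-r}(1) = 0$ unless the odd index is $>1$, in which case we get $(-2\zeta_r)(-2\zeta_{n-r})$ only when \emph{both} are $>1$ and odd — impossible when their sum is odd. Hence every product term $\Isv_r(1)\Isv_{n-r}(1)$ vanishes as well, and we are left with $\Icsv_{m_1,m_2}(1,1) = \Isv_{m_1,m_2}(1,1)$ for odd weight.

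It then remains to evaluate the single-valued double zeta $\Isv_{m_1,m_2}(1,1)$ in odd weight and show it equals $c_{m_1 m_2}\zeta_n$ with $c_{m_1m_2} = (-1)^{m_2}\binom{m_1+m_2}{m_1} - 1$. I would do this directly from the defining formula $\sv = m(F_\infty\Sigma\otimes\id)\Deltatilde$ applied to $\Imot(0;1,\{0\}^{m_1-1},1,\{0\}^{m_2-1};1)$: the coproduct $\Deltatilde$ produces a sum over cuts, $F_\infty$ kills all $(2\pi i)$-contributions, and in odd weight the surviving motivic periods are (by Brown's structure theorem, cited in Section~\ref{sec:sv}) rational multiples of $\zeta_n^{\mot}$, whose image under $\per$ is $\zeta_n$. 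The combinatorial coefficient is pinned down by the known reflection/shuffle identity $\Isv_{m_1,m_2}(1,1) + (-1)^n \Isv_{m_2,m_1}(1,1)$ reducing to products (compare the stuffle relation $\zeta_{m_1,m_2}+\zeta_{m_2,m_1} = \zeta_{m_1}\zeta_{m_2}-\zeta_n$ and its single-valued counterpart), together with the explicit single-valued MZV formula of Brown; matching against the binomial identity for $\sum_r \binom{n-r-1}{m_1-1}$ yields $c_{m_1m_2}$. The main obstacle I anticipate is precisely this last bookkeeping step — extracting the exact rational coefficient of $\zeta_n$ from the single-valued projection of a double zeta — which requires the explicit formula for $\svmot(\zeta^{\mot}_{m_1,m_2})$ rather than just its parity; everything else is a routine (if lengthy) cancellation of logarithmic and wrong-parity terms.
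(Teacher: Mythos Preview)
Your approach differs from the paper's in one substantive way: you route the computation through the explicit depth-two formula \eqref{eq:depth2}, whereas the paper works directly from the definition $\Icsv_{m_1,m_2}(1,1) = \mathfrak{R}_n\big[\cR(\zeta^{\mot}_{m_2,m_1})\big]$ and then invokes the classical odd-weight reduction formula for double zeta values (Euler, Borwein et al., cited as~\cite{borwein}), which expresses $\zeta^{\mot}_{m_2,m_1}$ for odd $n$ as $\tfrac12\big[\binom{n}{m_2}-1\big]\zeta^{\mot}_n$ plus products of single zetas. Since $\cR$ annihilates products and $\svmot(\zeta^{\mot}_n)=2\zeta_n$, one reads off $c_{m_1m_2}$ immediately; the case $m_1$ even, $m_2$ odd is then handled by the stuffle relation $\Icsv_{m_1,m_2}(1,1) = \Icsv_n(1) - \Icsv_{m_2,m_1}(1,1)$.

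Your route through \eqref{eq:depth2} is valid up to the conclusion $\Icsv_{m_1,m_2}(1,1) = \Isv_{m_1,m_2}(1,1)$ for odd $n$ (the parity analysis of the products $\Isv_r(1)\Isv_{n-r}(1)$ is correct), but two points need fixing. First, the statement that ``$\svmot$ of a motivic MZV of even weight vanishes'' is false in general --- e.g.\ $\svmot\big((\zeta^{\mot}_3)^2\big)=4\zeta_3^2$ in weight~$6$, and single-valued double zetas in even weight are typically nonzero. The correct argument, which you already use for the product terms and which is exactly what the paper uses, is simply that $\Isv_{m_1,m_2}(1,1)\in\mathbb{R}$ and $\mathfrak{R}_n=\Im$ for even $n$. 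Second, and more importantly, your final step --- extracting $c_{m_1m_2}$ from $\Isv_{m_1,m_2}(1,1)$ --- is where the content lies, and the appeal to ``Brown's structure theorem'' and the coproduct formula for $\sv$ is not a proof: odd-weight single-valued MZVs are \emph{not} automatically rational multiples of $\zeta_n$ in general. The clean way to finish is precisely the paper's: apply $\svmot$ to the Borwein identity, observe that every product term contains a factor $\zeta^{\mot}_{2k}$ (since $n$ is odd, any two-factor product of single zetas summing to $n$ has one even index) and is therefore annihilated, and read off the surviving coefficient. You correctly anticipate that this is the crux; but once you use that input, the detour through \eqref{eq:depth2} has bought nothing, since $\cR$ kills the very same products directly.
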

\begin{proof}
For $n=m_1+m_2$ even, $\Icsv_{m_1,m_2}(1,1)$ vanishes, because it is the imaginary part of a real number.
If~$n$ is odd, we need to distinguish two cases depending on the parity of $m_1$ and $m_2$.
If~$m_1$ is odd (and thus $m_2$ is even), we have
\begin{gather*}
\Icsv_{m_1,m_2}(1,1) = \Re_n \big[\cR\big(\Imot(0;1,\{0\}^{m_1-1},1,\{0\}^{m_2-1};1)\big)\big] = \cR\big(\zeta^{\mot}_{m_2,m_1}\big) ,
\end{gather*}
where $\zeta^{\mot}_{m_2,m_1}$ is the motivic lift of the double zeta value
\begin{gather*}
\zeta_{m_2,m_1} = \sum_{1\le k_1<k_2}\frac{1}{k_1^{m_1}k_2^{m_2}} .
\end{gather*}
For $n$ odd, $m_2>0$ even and $m_1>1$ odd we have, with $n=2N+1$~\cite{borwein},
\begin{gather*}
\zeta^{\mot}_{m_2,m_1} = \zeta^{\mot}_{m_2} \zeta^{\mot}_{m_1}+\frac{1}{2} \bigg[\binom{n}{m_2}-1\bigg] \zeta^{\mot}_{n}-\sum_{r=1}^{N-1}\bigg[\binom{2r}{a-1}+\binom{2r}{b-1}\bigg] \zeta^{\mot}_{2r+1} \zeta^{\mot}_{n-1-2r} ,
\end{gather*}
while for $m_2$ even and $m_1=1$, we have
\begin{gather*}
\zeta^{\mot}_{m_2,1} =\frac{m_2}{2} \zeta^{\mot}_{m_2+1}-\frac{1}{2}\sum_{r=1}^{m_2-2}\zeta^{\mot}_{r+1} \zeta^{\mot}_{m_2-r} .
\end{gather*}
Hence, $m_2>0$ even and $m_1>0$ odd, we have
\begin{gather*}
\cR\big(\zeta^{\mot}_{m_2,m_1}\big) = \bigg[\binom{n}{m_2}-1\bigg] \zeta_{n} = c_{m_1,m_2} \zeta_n .
\end{gather*}

If $m_1>0$ is even and $m_2>0$ is odd, we start form the well-known stuffle identity for (motivic) MPL's:
\begin{gather*}
\Imot_{m_1}\bigg(\frac{x_1}{x_2}\bigg) \Imot_{m_2}(x_2) = \Imot_{m_1,m_2}(x_1,x_2) + \Imot_{m_2,m_1}\bigg(x_1,\frac{x_1}{x_2}\bigg) - \Imot_{m_1+m_2}(x_1) ,
\end{gather*}
to obtain
\begin{gather*}%\label{eq:clean_stuffle}
\Icsv_{m_1,m_2}(x_1,x_2) = \Icsv_{m_1+m_2}(x_1) - \Icsv_{m_2,m_1}\bigg(x_1,\frac{x_1}{x_2}\bigg) .
\end{gather*}
Hence
\begin{gather*}
\Icsv_{m_1,m_2}(1,1) = \Icsv_{n}(1) - \Icsv_{m_2,m_1}(1,1)
=-2\zeta_{n} - \left[\binom{n}{m_2}-1\right] \zeta_{n} = c_{m_2,m_1} \zeta_n . \tag*{\qed}
\end{gather*}\renewcommand{\qed}{}
\end{proof}

We can also obtain the inversion relation in depth 2 and for all weights:
\begin{Proposition}\label{prop:inversion_depth_2}
With $n=m_1+m_2$, we have
\begin{align*}
\Icsv_{m_1,m_2}\bigg(\frac1{x_1},\frac1{x_2}\bigg) = {}&(-1)^{n} \Icsv_{m_1,m_2}(x_1,x_2) - (-1)^{m_2} \binom{n-1}{m_1} \Icsv_{n}(x_2)
\\
& + (-1)^{m_1} \binom{n-1}{m_2} \Icsv_{n}\left(\frac{x_1}{x_2}\right) - (-1)^{n} \Icsv_{n}(x_1) .
\end{align*}
\end{Proposition}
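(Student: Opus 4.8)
The plan is to establish the identity at the motivic level modulo products and then invoke Theorem~\ref{thm:main}. Writing $n=m_1+m_2$, it suffices to show
\begin{gather*}
\Imot_{m_1,m_2}\bigg(\frac1{x_1},\frac1{x_2}\bigg) - (-1)^{n}\Imot_{m_1,m_2}(x_1,x_2) + (-1)^{m_2}\binom{n-1}{m_1}\Imot_{n}(x_2)\\
{} - (-1)^{m_1}\binom{n-1}{m_2}\Imot_{n}\bigg(\frac{x_1}{x_2}\bigg) + (-1)^{n}\Imot_{n}(x_1)\ \in\ \cPm_{\mathrm{MPL},>0}\cdot\cPm_{\mathrm{MPL},>0} ,
\end{gather*}
since then applying $\mathfrak{R}_n\circ\cR$ and using Proposition~\ref{prop:ker} to kill the product terms produces exactly the claimed clean identity, in the same way that Theorem~\ref{thm:main} itself was deduced.

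To obtain this congruence the most economical route is to quote the explicit depth-$2$ motivic inversion relation already available in the literature --- Charlton's thesis~\cite{CharltonPhD} gives it for any depth-$2$ MPL, and Panzer~\cite{panzer_parity} for arbitrary depth --- and read off its product-free part. Here the stuffle relation $\Imot_{m_1}(x_1/x_2)\,\Imot_{m_2}(x_2) = \Imot_{m_1,m_2}(x_1,x_2) + \Imot_{m_2,m_1}(x_1,x_1/x_2) - \Imot_{n}(x_1)$ is itself a product relation and so may be added freely; it is what couples the two depth-$1$ terms $\Imot_n(x_2)$ and $\Imot_n(x_1/x_2)$ and fixes their relative signs $(-1)^{m_2}$, $(-1)^{m_1}$. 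If one prefers a self-contained derivation, one rederives the motivic inversion by the substitution $t\mapsto 1/t$ in the iterated integral~\eqref{eq:G_def} for $\Imot\big(0;\frac1{x_1},\{0\}^{m_1-1},\frac1{x_2},\{0\}^{m_2-1};1\big)$: since $\frac{\mathrm{d}(1/t)}{\,1/t-1/x_i\,}=-\frac{\mathrm{d}t}{t}+\frac{\mathrm{d}t}{t-x_i}$ and $\frac{\mathrm{d}(1/t)}{1/t}=-\frac{\mathrm{d}t}{t}$, the substitution converts each letter $1/x_i$ into a letter $x_i$ at the cost of one extra $0$, reverses the word (producing the global sign $(-1)^{n}$), and sends the endpoints to $\infty$ and $1$; the integrals with endpoint $\infty$ are re-expressed in the normalised form $\Imot(0;\ldots;1)$ via path composition together with the scaling invariance $\Imot(k\cdot x_0;\ldots;k\cdot x_{n+1})=\Imot(x_0;\ldots;x_{n+1})$, and it is through these manipulations and the stuffle relation that the argument $x_1/x_2$ enters. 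The extra zeros are then absorbed using the mod-products (motivic) analogue of Proposition~\ref{prop:unshuffling}, whose reindexing produces the binomials: the unshuffling terms that retain both nonzero arguments collapse to $(-1)^{n}\Imot_{m_1,m_2}(x_1,x_2)$, while those pushing one argument into the string of zeros collapse to the depth-$1$ integrals $\Imot_n(x_1)$, $\Imot_n(x_2)$, $\Imot_n(x_1/x_2)$ with coefficients $\binom{n-1}{m_1}$, $\binom{n-1}{m_2}$.

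The main obstacle is bookkeeping rather than anything conceptual: one must verify that every lower-weight product generated along the way --- by the path compositions, the re-expression at $\infty$, the unshuffling of leading zeros, and the stuffle rewriting --- genuinely cancels modulo $\cPm_{\mathrm{MPL},>0}\cdot\cPm_{\mathrm{MPL},>0}$, since this is precisely the hypothesis that licenses the use of Theorem~\ref{thm:main}, and that the surviving coefficients really come out as $\binom{n-1}{m_1}$ and $\binom{n-1}{m_2}$ after the reindexing in Proposition~\ref{prop:unshuffling}. As consistency checks one can specialise to $x_1=x_2=1$ and compare with Proposition~\ref{prop:depth2_at_1}, and confront the formula with the depth-$1$ inversion $\Icsv_n(1/x)=(-1)^{n+1}\Icsv_n(x)$.
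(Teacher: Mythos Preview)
Your outline is sound, and both you and the paper rely on the same external input: the depth-$2$ inversion relation from \cite{CharltonPhD} (and/or \cite{panzer_parity}). The difference is in how the constant is handled. The paper quotes only the symbol-level version, so the two sides of the proposition agree modulo products \emph{plus a possible primitive constant} (a multiple of $\zeta_n^{\mot}$, which is not a product and hence is not killed by $\Pi$); it then pins down this constant by evaluating both sides at $x_1=x_2=1$ using Proposition~\ref{prop:depth2_at_1} and equation~\eqref{eq:clean_zeta_d1}. You instead propose to read off the full product-free part of Panzer's exact identity and apply Theorem~\ref{thm:main} directly, treating the specialisation to $x_1=x_2=1$ as a mere consistency check.

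That is legitimate, but you should be explicit that it is the \emph{exact} identity (not merely the symbol-level one) that you are invoking, and you must actually carry out the reading-off to confirm there is no stray $\zeta_n$ term --- otherwise the $x_1=x_2=1$ check is not optional but essential, exactly as in the paper. Your sketch of the $t\mapsto 1/t$ derivation is too schematic to stand on its own: the claim that the unshuffling collapses to the stated binomials with the stated signs is precisely the content that needs checking, and you defer it to ``bookkeeping''. So as written your argument is a correct plan but not yet a proof; the paper's point-check is shorter and cleaner once the symbol-level identity is granted.
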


\begin{proof}This identity was shown to hold modulo products and up to a constant in~\cite{CharltonPhD} (more precisely, it was shown to hold modulo products at symbol level). It is thus sufficient to show that Proposition~\ref{prop:inversion_depth_2} holds at one point, e.g., $x_1=x_2=1$.

For even $n$, both sides of the relation vanish identically at $x_1=x_2=0$, and so the constant is zero. For odd $n$, the right-hand side evaluated at $x_1=x_2=1$ gives
\begin{gather*}
-\Icsv_{m_1,m_2}(1,1) - (-1)^{m_2} \binom{n-1}{m_1} \Icsv_{n}(1)- (-1)^{m_2} \binom{n-1}{m_2} \Icsv_{n}(1) +\Icsv_{n}(1)
\\ \qquad
{} =\zeta_{n}\left[-(-1)^{m_2}\binom{n}{m_1}+1+2(-1)^{m_2} \binom{n-1}{m_1} +2(-1)^{m_2} \binom{n-1}{m_2} -2\right]
 \\ \qquad
{} =\zeta_{n}\left[(-1)^{m_2}\binom{n}{m_1}-1\right] =\Icsv_{m_1,m_2}(1,1) ,
\end{gather*}
where we used the recursion for the binomial coefficients:
\begin{gather*}
\binom{n-1}{m_1} +\binom{n-1}{m_2} = \binom{n-1}{m_1} +\binom{n-1}{m_1-1} = \binom{n}{m_1} . \tag*{\qed}
\end{gather*}
\renewcommand{\qed}{}
\end{proof}

\begin{Corollary}%\label{cor:odd_root_unity}
Let $\xi_N={\rm e}^{2\pi {\rm i}/N}$, $a$, $b$, $m_1$, $m_2$ integers with $n:=m_1+m_2$ odd and $m_1,m_2>0$. Then
\begin{align*}
\Icsv_{m_1,m_2}(\xi_N^a,\xi_N^b) ={}& -\Cl_n\bigg(\frac{2\pi a}{N}\bigg) - (-1)^{m_1}\binom{n-1}{m_1} \Cl_n\bigg(\frac{2\pi b}{N}\bigg)
\\
{} &+(-1)^{m_2}\binom{n-1}{m_2} \Cl_n\bigg(\frac{2\pi (a-b)}{N}\bigg) .
\end{align*}
\end{Corollary}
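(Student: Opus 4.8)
The plan is to specialise the inversion relation of Proposition~\ref{prop:inversion_depth_2} to the roots of unity $x_1 = \xi_N^a$ and $x_2 = \xi_N^b$, and then express every term via the depth~1 evaluation of Corollary~\ref{prop:Icsv_root_unity}. First I would record that at $N$-th roots of unity we have $1/x_1 = \xi_N^{-a}$, $1/x_2 = \xi_N^{-b}$ and $x_1/x_2 = \xi_N^{a-b}$, so Proposition~\ref{prop:inversion_depth_2} reads
\begin{gather*}
\Icsv_{m_1,m_2}\big(\xi_N^{-a},\xi_N^{-b}\big) = (-1)^n \Icsv_{m_1,m_2}\big(\xi_N^a,\xi_N^b\big) - (-1)^{m_2}\binom{n-1}{m_1}\Icsv_n\big(\xi_N^b\big)
\\
\hphantom{\Icsv_{m_1,m_2}\big(\xi_N^{-a},\xi_N^{-b}\big) =}{}+ (-1)^{m_1}\binom{n-1}{m_2}\Icsv_n\big(\xi_N^{a-b}\big) - (-1)^n \Icsv_n\big(\xi_N^a\big) .
\end{gather*}

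Next I would invoke Corollary~\ref{prop:Icsv_root_unity} (applicable since $n = m_1+m_2 > 1$) to rewrite each depth~1 term as $\Icsv_n(\xi_N^c) = 2(-1)^n \Cl_n(2\pi c/N)$, and use the reflection property $\Cl_n(-\alpha) = (-1)^{n+1}\Cl_n(\alpha)$ of the Clausen function (immediate from $\mathfrak{R}_n(\li_n(\mathrm{e}^{-i\alpha})) = \mathfrak{R}_n(\overline{\li_n(\mathrm{e}^{i\alpha})})$ together with the definition of $\mathfrak{R}_n$). Since $n$ is odd, $\Cl_n$ is an even function, so $\Cl_n(2\pi(-a)/N) = \Cl_n(2\pi a/N)$ and likewise for $b$. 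Thus the left-hand side $\Icsv_{m_1,m_2}(\xi_N^{-a},\xi_N^{-b})$ is in fact obtained from the desired right-hand side expression by $a \mapsto -a$, $b\mapsto -b$, which leaves it unchanged. This means the equation above is not by itself enough to solve for $\Icsv_{m_1,m_2}(\xi_N^a,\xi_N^b)$; one more input is needed.

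The missing input is the reversal/reality behaviour that forces $\Icsv_{m_1,m_2}(\xi_N^{-a},\xi_N^{-b})$ to equal $-(-1)^n \Icsv_{m_1,m_2}(\xi_N^a,\xi_N^b)$ up to the depth~1 correction terms, so that the two copies of the depth~2 function add rather than cancel. Concretely, complex conjugation acts on $\Icsv$ by sending each $\xi_N^c \mapsto \xi_N^{-c}$ while multiplying by $(-1)^{n+1}$ (this is the single-valued analogue of Proposition~\ref{prop:Fsv}, combined with the $\mathfrak{R}_n$ in Definition~\ref{def:Icsv}: the odd-$n$ function is a real part of an object whose Frobenius is $(-1)^n$ times its antipode, and the antipode in the relevant Hopf algebra differs from reversal by a sign). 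Using this, $\Icsv_{m_1,m_2}(\xi_N^{-a},\xi_N^{-b}) = (-1)^{n+1}\Icsv_{m_1,m_2}(\xi_N^a,\xi_N^b)$; substituting into the specialised inversion relation and collecting the two $\Icsv_{m_1,m_2}$ terms on one side gives $2\Icsv_{m_1,m_2}(\xi_N^a,\xi_N^b)$ on the left, and on the right a linear combination of the three Clausen values, each carrying a factor $2$ from Corollary~\ref{prop:Icsv_root_unity}; dividing by $2$ yields exactly the claimed formula (with the $(-1)^n = -1$ factors from the odd weight producing the stated signs $-\Cl_n(2\pi a/N)$, $-(-1)^{m_1}\binom{n-1}{m_1}\Cl_n(2\pi b/N)$ and $+(-1)^{m_2}\binom{n-1}{m_2}\Cl_n(2\pi(a-b)/N)$).

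I expect the main obstacle to be pinning down the precise sign in the conjugation/reversal relation $\Icsv_{m_1,m_2}(\bar x_1,\bar x_2) = (-1)^{n+1}\Icsv_{m_1,m_2}(x_1,x_2)$ at roots of unity and making sure it is compatible with all the $(-1)^{m_i}$ and $(-1)^n$ bookkeeping in Proposition~\ref{prop:inversion_depth_2} — in particular verifying that at odd $n$ the inversion relation really does let one solve for the depth~2 value rather than producing a tautology. Everything else is a direct substitution and use of the even-ness of $\Cl_n$ for odd $n$ together with the binomial identity $\binom{n-1}{m_1}+\binom{n-1}{m_2} = \binom{n}{m_1}$ already used in the proof of Proposition~\ref{prop:inversion_depth_2}.
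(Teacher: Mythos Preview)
Your approach is correct and essentially identical to the paper's: specialise the inversion relation of Proposition~\ref{prop:inversion_depth_2} at roots of unity, combine it with the conjugation identity $\Icsv_{m_1,m_2}(\bar x_1,\bar x_2)=\Icsv_{m_1,m_2}(x_1,x_2)$ valid for odd weight, and then solve for the depth-2 value using Corollary~\ref{prop:Icsv_root_unity} on the depth-1 terms.

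The only point worth sharpening is your justification of the conjugation identity. You need not appeal to Proposition~\ref{prop:Fsv} or the antipode; that proposition concerns the real Frobenius on motivic periods and the Hopf-algebra antipode, which is not quite the same as conjugating the arguments, and your explanation via ``antipode differs from reversal by a sign'' is muddled. The clean argument is elementary: for odd $n$ the function $\Icsv_{m_1,m_2}$ is by definition the \emph{real part} of $\cR\big(\Imot_{m_1,m_2}(x_1,x_2)\big)$, a real-analytic function built from single-valued polylogarithms with rational coefficients; replacing each argument by its complex conjugate conjugates the value of this function, leaving its real part unchanged. Since $\xi_N^{-a}=\overline{\xi_N^{a}}$ this gives exactly $\Icsv_{m_1,m_2}(\xi_N^{-a},\xi_N^{-b})=\Icsv_{m_1,m_2}(\xi_N^{a},\xi_N^{b})$, which is how the paper states it. Finally, the binomial identity you mention at the end is not needed here; the signs fall out directly from $(-1)^{m_2}=-(-1)^{m_1}$ for odd $n=m_1+m_2$.
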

\begin{proof}
For odd $n$, we have
\begin{gather*}
\Icsv_{m_1,m_2}\big(\xi_N^{-a},\xi_N^{-b}\big) = \Icsv_{m_1,m_2}\big(\xi_N^a,\xi_N^b\big) ,
\end{gather*}
and we see from the inversion relation and Corollary~\ref{prop:Icsv_root_unity} that $\Icsv_{m_1,m_2}\big(\xi_N^a,\xi_N^b\big)$ can be written as a linear combination of Clausen values.
\end{proof}

\subsection{Results in depth 3}

We recall that the explicit version of the parity theorem given by Panzer \cite{panzer_parity} in depth~3 is as follows. Write
\begin{gather*}
\li_{m_1,m_2}(x,y)= I_{m_1,m_2}\big((xy)^{-1},z^{-1}\big) ,\\
\li_{m_1,m_2,m_3}(x,y,z) = I_{m_1,m_2,m_3}\big((xyz)^{-1} ,(yz)^{-1},z^{-1}\big)
\end{gather*}
in terms of the iterated integral notation.
Let $m_1,m_2,m_3 \in \mathbb{Z}_{>0}$, then on the simply-connected domain $\mathbb{C}^3 \setminus \bigcup_{1 \leq i \leq j \leq 3} \{ z \colon z_i z_{i+1} \cdots z_j \in [0, \infty) \}$, which avoids the branch cuts $z_3, z_2 z_3, z_1 z_2 z_3 \in [0, \infty)$ of any the terms $\log(-z_i \cdots z_3)$ therein, the following identity holds,
\begin{gather*}
\li_{m_1,m_2,m_3}(z_1,z_2,z_3) + (-1)^{m_1 + m_2 + m_3} \li_{m_1,m_2,m_3}\big(z_1^{-1},z_2^{-1},z_3^{-1}\big)
\\ \qquad
{}=\li_{m_1}(z_1) \big( \li_{m_2,m_3}(z_2,z_3) - (-1)^{m_2 + m_3} \li_{m_2,m_3}\big(z_2^{-1},z_3^{-1}\big) \big)
\\ \qquad\hphantom{=}
{}- \li_{m_1+m_2,m_3}(z_1 z_2, z_3)+ \li_{m_2,m_1}(z_2,z_1) \ber{m_3}{z_3}
+ \li_{m_2+m_3,m_1}(z_2 z_3, z_1)
\\ \qquad\hphantom{=}
{}- \sum_{\mathclap{\mu+\nu+s= m_2}}\ber{s}{z_1 z_2 z_3}
\binom{-m_3}{\mu}\binom{-m_1}{\nu}\li_{m_3+\mu}(z_3^{-1})
\li_{m_1+\nu}(z_1)(-1)^{m_3+\mu}
\\ \qquad\hphantom{=}
{}- \sum_{\mathclap{\mu+\nu+s=m_3}} \ber{s}{z_1 z_2 z_3}
\binom{-m_2}{\mu}\binom{-m_1}{\nu}\li_{m_2+\mu,m_1+\nu}(z_2,z_1)
\\ \qquad\hphantom{=}
{}- \sum_{\mathclap{\mu+\nu+s=m_1}}\ber{s}{z_1 z_2 z_3}
 \binom{-m_2}{\mu} \binom{-m_3}{\nu} \li_{m_2+\mu,m_3+\nu}\big(z_2^{-1},z_3^{-1}\big) (-1)^{m_2+\mu+m_3+\nu},
\end{gather*}
where
\begin{gather*}
 \ber{n}{z} \mathop{:=} \frac{(2\pi {\rm i})^n}{n!} B_n\bigg(\frac{1}{2} + \frac{\log(-z)}{2 \pi {\rm i}} \bigg)
\end{gather*}
and $B_n(z)$ is the Bernoulli polynomial defined in \eqref{eqn:bernoulli}.

We observe that if $s > 0$ in any of the above sums, then the resulting summand is a product, and so vanishes after passing to the clean single-valued functions $\licsv$. Whereas when $s = 0$, the Bernoulli factor is simply $\ber{0}{z} = 1$. Moreover, the first sum is in fact always a product, as are the first and third terms. After passage to the clean single-valued functions, we obtain the following version of the parity theorem in depth 3, for the clean single-valued functions.

\begin{Proposition}
 Let $m_1, m_2, m_3 \in \mathbb{Z}_{>0}$, then we have
\begin{gather*}
\licsv_{m_1,m_2,m_3}(z_1,z_2,z_3) + (-1)^{m_1 + m_2 + m_3} \licsv_{m_1,m_2,m_3}\big(z_1^{-1},z_2^{-1},z_3^{-1}\big)
\\ \qquad
= - \licsv_{m_1+m_2,m_3}(z_1 z_2, z_3)	+ \licsv_{m_2+m_3,m_1}(z_2 z_3, z_1)
\\ \qquad\hphantom{=}
{}-\sum_{\mu+\nu=m_3}\binom{-m_2}{\mu}\binom{-m_1}{\nu}\licsv_{m_2+\mu,m_1+\nu}(z_2,z_1)
\\ \qquad\hphantom{=}
{}-\sum_{\mu+\nu=m_1} \binom{-m_2}{\mu}\binom{-m_3}{\nu}
 \licsv_{m_2+\mu,m_3+\nu}\big(z_2^{-1},z_3^{-1}\big) (-1)^{m_2+\mu+m_3+\nu}.
\end{gather*}
\end{Proposition}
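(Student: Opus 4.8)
The plan is to deduce the statement from Panzer's depth-$3$ parity theorem recalled above together with Theorem~\ref{thm:main}: one rearranges Panzer's identity so that all product terms sit on one side, observes that the remaining non-product combination of motivic MPL's therefore lies in $\cPm_{\mathrm{MPL},>0}\cdot\cPm_{\mathrm{MPL},>0}$, and applies Theorem~\ref{thm:main}. First I would record that Panzer's identity lifts to an identity among the corresponding motivic (equivalently de Rham) multiple polylogarithms $\li^{\mot}_{m_1,m_2,m_3}(z_1,z_2,z_3) = \Imot_{m_1,m_2,m_3}\big((z_1z_2z_3)^{-1},(z_2z_3)^{-1},z_3^{-1}\big)$, and likewise in lower depth: the parity theorem is obtained from path reversal, path composition and the inversion relation for classical polylogarithms, all of which hold at the motivic level, while the coefficients $\ber{s}{z} = \frac{(2\pi \mathrm{i})^s}{s!}B_s\big(\tfrac{1}{2} + \tfrac{\log(-z)}{2\pi \mathrm{i}}\big)$ are rational polynomials in $\log^{\mot}(-z)$ and $(2\pi \mathrm{i})^{\mot}$ and so have unambiguous motivic lifts.

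Next I would sort the terms on the right-hand side of the (motivic) Panzer identity. The term $\li^{\mot}_{m_1}(z_1)\big(\li^{\mot}_{m_2,m_3}(z_2,z_3) - (-1)^{m_2+m_3}\li^{\mot}_{m_2,m_3}(z_2^{-1},z_3^{-1})\big)$ and the term $\li^{\mot}_{m_2,m_1}(z_2,z_1)\ber{m_3}{z_3}$ are manifestly products of positive-weight motivic periods; the entire first sum (over $\mu+\nu+s=m_2$) is a product of two classical motivic polylogarithms; and in the remaining two sums each summand with $s\geq 1$ carries the positive-weight factor $\ber{s}{z_1z_2z_3}$ (note $\ber{1}{z}=\log^{\mot}(-z)$, and $\ber{s}{z}$ has weight $s$) multiplying a positive-weight $\li^{\mot}$, hence also lies in $\cPm_{\mathrm{MPL},>0}\cdot\cPm_{\mathrm{MPL},>0}$. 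Only the $s=0$ summands (where $\ber{0}{z}=1$), together with $-\li^{\mot}_{m_1+m_2,m_3}(z_1z_2,z_3)$ and $\li^{\mot}_{m_2+m_3,m_1}(z_2z_3,z_1)$, survive. Moving the product terms to the left then exhibits a $\mathbb{Q}$-linear combination of motivic MPL's of weight $n=m_1+m_2+m_3$ lying in $\cPm_{\mathrm{MPL},>0}\cdot\cPm_{\mathrm{MPL},>0}$; applying $\cR=\per\circ\sv\circ\Pi\circ\pi^{\dr}$, which is an algebra homomorphism annihilating this ideal by Proposition~\ref{prop:ker}, and then $\mathfrak{R}_n$ (which is $\mathbb{R}$-linear, the coefficients being real), exactly as in the proof of Theorem~\ref{thm:main}, kills every product term and leaves precisely the claimed identity among the $\licsv$, since by definition $\licsv_{m_1,m_2,m_3}(z_1,z_2,z_3) = \mathfrak{R}_n\big[\cR\big(\li^{\mot}_{m_1,m_2,m_3}(z_1,z_2,z_3)\big)\big]$ and similarly in lower depth.

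The main obstacle is the first step, namely pinning down the precise sense in which Panzer's identity is available motivically. If one grants that the parity theorem is established within the Hopf-algebra/coaction framework of de Rham MPL's — which it is — then nothing more is needed, since membership in $\cPm_{\mathrm{MPL},>0}\cdot\cPm_{\mathrm{MPL},>0}$ is already detected by $\Pi\circ\pi^{\dr}$. Otherwise one supplements ``Panzer's identity holds modulo products at the symbol level'' by fixing the residual constant, e.g.\ by specialising to $z_1=z_2=z_3=1$ and invoking Proposition~\ref{prop:depth2_at_1} (together with its depth-$3$ analogue and the known motivic relations among double and triple zeta values), just as in the proof of Proposition~\ref{prop:inversion_depth_2}. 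Everything else — the bookkeeping of which summands are products, and the appeal to Theorem~\ref{thm:main} — is routine.
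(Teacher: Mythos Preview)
Your proposal is correct and follows essentially the same approach as the paper: identify which terms in Panzer's depth-$3$ parity identity are products of positive-weight MPL's (the first and third terms, the entire first sum, and all $s>0$ summands in the remaining two sums), observe that these are annihilated upon passage to the clean single-valued functions via Theorem~\ref{thm:main}, and read off the stated identity from the surviving $s=0$ terms. You are somewhat more explicit than the paper about the motivic lifting of Panzer's identity and about the possibility of fixing a residual constant by specialisation, but the paper simply takes this as granted from the symbol-level/Hopf-algebraic derivation of the parity theorem; one small slip is your description of $\cR$ as an ``algebra homomorphism annihilating this ideal''---$\cR$ is only linear, not multiplicative (if it were a ring map sending nontrivial products to zero it would be rather degenerate), though the paper itself uses the same loose phrasing and the point is inessential since only the vanishing-on-products property from Proposition~\ref{prop:ker} is used.
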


\section[Clean single-valued Nielsen polylogarithm \protect{S\_\{n,2\}(x)}]
 {Clean single-valued Nielsen polylogarithm $\boldsymbol{S_{n,2}(x)}$}
\label{sec:nielsen}

In this section, we carry out the requisite calculations necessary to explicitly write the single-valued Nielsen polylogarithm $S_{n,2}$, and the clean version thereof. This provides the missing derivation for a formula for $\sv S_{n,2}(x)$ already stated in \cite{Charlton:2019gvp} (the main properties of which were however verified therein).

We recall first the definition of the Nielsen polylogarithm.
\begin{Definition}[Nielsen polylogarithm $S_{n,p}$]
For integers $n, p \geq 0$ the {\it Nielsen polyloga\-rithm $S_{n,p}(x)$} is defined in terms of iterated integrals as follows
\begin{gather*}
S_{n,p}(x) = (-1)^p I(0; \{1\}^p, \{0\}^n; x).
\end{gather*}
Correspondingly the \emph{motivic Nielsen polylogarithm} is
\begin{gather*}
S^\mot_{n,p}(x) = (-1)^p I^\mot(0; \{1\}^p, \{0\}^n; x).
\end{gather*}
\end{Definition}

Note that, for $p = 1$, we have $S_{n,1}(x) = \li_{n+1}(x)$, which recovers the classical polylogarithm as a special case.

\subsection[Coproduct of \protect{S\textasciicircum{}\{dr\}\_\{n,2\}(x)}]
 {Coproduct of $\boldsymbol{S^\dr_{n,2}(x)}$}

We start by computing the coproduct $\Delta S^\dr_{n,2}(x) = \Delta I^\dr(0; 1, 1, \{0\}^n; x)$. We first consider which terms will contribute, and which can be ignored. We recall that each term in the coproduct $\Delta I^\dr(x_0; x_1,\ldots,x_n; x_{n+1})$ is obtained by selecting a subset $S$ of points $x_0, x_1, \ldots, x_n, x_{n+1}$, where $x_0$ and $x_{n+1}$ are always to be included (cf.~\eqref{eq:coaction}, where~$i_j$ indexes the subset $S$ in the coaction). The intervals between these points contribute a product of integrals on the right hand side of the coproduct, the subset itself contributes a single integral of these points on the left hand side of the coproduct. These terms are often pictorially represented with the mnemonic of arcs joining the vertices of a semi-circular polygon (see~\cite[Theorem~1.2]{Goncharov:2005sla} and the remarks thereafter).

If we do not include $x_1$ as part of a term in the reduced coproduct $\Delta'$, then the first point included is either $x_2$ which contributes $I^\dr(x_0; x_1; x_2) = I^\dr(0; 1; 1) = 0$ {(with the upper integration limit a tangential base-point at $1$, giving the required regularisation)} to the product side, or is $x_{k}$ for $k \geq 3$, which contributes $I^\dr(x_0; x_1,\ldots,x_{k-1}; x_k) = I^\dr(0; 1, 1, \{0\}^\bullet; 0) = 0$ to the product side:
\begin{center}
\begin{tikzpicture}[style = {very thick}, scale = 0.5, baseline={([yshift=-1em]current bounding box.center)}]
	\draw (3,0) arc [start angle=0, end angle = 180, radius=3]
	node[pos=0.000, name=x12, inner sep=0pt] {}
	node[pos=0.083, name=x11, inner sep=0pt] {}
	node[pos=0.167, name=x10, inner sep=0pt] {}
	node[pos=0.250, name=x9, inner sep=0pt] {}
	node[pos=0.333, name=x8, inner sep=0pt] {}
	node[pos=0.417, name=x7, inner sep=0pt] {}
	node[pos=0.500, name=x6, inner sep=0pt] {}
	node[pos=0.583, name=x5, inner sep=0pt] {}
	node[pos=0.667, name=x4, inner sep=0pt] {}
	node[pos=0.750, name=x3, inner sep=0pt] {}
	node[pos=0.833, name=x2, inner sep=0pt] {}
	node[pos=0.917, name=x1, inner sep=0pt] {}
	node[pos=1.0, name=x0, inner sep=0pt] {};
	\draw (-3.5,0) -- (3.5,0);
	\filldraw
	(x0) circle [radius=0.1, fill=black, red] node[above left] {$0$}
	(x1) circle [radius=0.1, fill=black] node[above left] {$1$}
	(x2) circle [radius=0.1, fill=black] node[above left] {$1$}
	(x3) circle [radius=0.1, fill=black] node[above left] {$0$}
	(x4) circle [radius=0.1, fill=black] node[above left] {$0$}
	(x5) circle [radius=0.1, fill=black] node[above left, xshift=2] {$0$}
	(x6) circle [radius=0.1, fill=black] node[above, yshift=2] {$0$}
	(x7) circle [radius=0.1, fill=black] node[above right, xshift=-2] {$0$}
	(x8) circle [radius=0.1, fill=black] node[above right] {$0$}
	(x9) circle [radius=0.1, fill=black] node[above right] {$\ddots$}
	(x10) circle [radius=0.1, fill=black] node[above right] {$0$}
	(x11) circle [radius=0.1, fill=black] node[above right] {$0$}
	(x12) circle [radius=0.1, fill=black] node[above right] {$x$};
	\draw (x7) node[below] {$\ddots$};
	\draw [solid, bend right=40] (x0) edge (x2);
	\draw [dotted, bend right=40] (x2) edge (x6);
	\draw [dotted, bend right=40] (x9) edge (x12);
	\end{tikzpicture} = 0, \qquad
	\begin{tikzpicture}[style = {very thick}, scale = 0.5,baseline={([yshift=-1em]current bounding box.center)}]
	\draw (3,0) arc [start angle=0, end angle = 180, radius=3]
	node[pos=0.000, name=x12, inner sep=0pt] {}
	node[pos=0.083, name=x11, inner sep=0pt] {}
	node[pos=0.167, name=x10, inner sep=0pt] {}
	node[pos=0.250, name=x9, inner sep=0pt] {}
	node[pos=0.333, name=x8, inner sep=0pt] {}
	node[pos=0.417, name=x7, inner sep=0pt] {}
	node[pos=0.500, name=x6, inner sep=0pt] {}
	node[pos=0.583, name=x5, inner sep=0pt] {}
	node[pos=0.667, name=x4, inner sep=0pt] {}
	node[pos=0.750, name=x3, inner sep=0pt] {}
	node[pos=0.833, name=x2, inner sep=0pt] {}
	node[pos=0.917, name=x1, inner sep=0pt] {}
	node[pos=1.0, name=x0, inner sep=0pt] {};
	\draw (-3.5,0) -- (3.5,0);
	\filldraw
	(x0) circle [radius=0.1, fill=black, red] node[above left] {$0$}
	(x1) circle [radius=0.1, fill=black] node[above left] {$1$}
	(x2) circle [radius=0.1, fill=black] node[above left] {$1$}
	(x3) circle [radius=0.1, fill=black] node[above left] {$0$}
	(x4) circle [radius=0.1, fill=black] node[above left] {$0$}
	(x5) circle [radius=0.1, fill=black] node[above left, xshift=2] {$0$}
	(x6) circle [radius=0.1, fill=black] node[above, yshift=2] {$0$}
	(x7) circle [radius=0.1, fill=black] node[above right, xshift=-2] {$0$}
	(x8) circle [radius=0.1, fill=black] node[above right] {$0$}
	(x9) circle [radius=0.1, fill=black] node[above right] {$\ddots$}
	(x10) circle [radius=0.1, fill=black] node[above right] {$0$}
	(x11) circle [radius=0.1, fill=black] node[above right] {$0$}
	(x12) circle [radius=0.1, fill=black] node[above right] {$x$};
	\draw (x7) node[below] {$\ddots$};
	\draw [solid, bend right=40] (x0) edge (x3);
	\draw [dotted, bend right=40] (x3) edge (x6);
	\draw [dotted, bend right=40] (x9) edge (x12);
	\end{tikzpicture} = 0.
\end{center}
Hence we must include $x_1$ in the subset.	 If this is the entire subset, we obtain the following term in $\Delta S^\dr_{n,2}(x)$
\begin{center}
	
	\begin{tikzpicture}[style = {very thick}, scale = 0.5,baseline={([yshift=-1em]current bounding box.center)}]
	\draw (3,0) arc [start angle=0, end angle = 180, radius=3]
	node[pos=0.000, name=x12, inner sep=0pt] {}
	node[pos=0.083, name=x11, inner sep=0pt] {}
	node[pos=0.167, name=x10, inner sep=0pt] {}
	node[pos=0.250, name=x9, inner sep=0pt] {}
	node[pos=0.333, name=x8, inner sep=0pt] {}
	node[pos=0.417, name=x7, inner sep=0pt] {}
	node[pos=0.500, name=x6, inner sep=0pt] {}
	node[pos=0.583, name=x5, inner sep=0pt] {}
	node[pos=0.667, name=x4, inner sep=0pt] {}
	node[pos=0.750, name=x3, inner sep=0pt] {}
	node[pos=0.833, name=x2, inner sep=0pt] {}
	node[pos=0.917, name=x1, inner sep=0pt] {}
	node[pos=1.0, name=x0, inner sep=0pt] {};
	\draw (-3.5,0) -- (3.5,0);
	\filldraw
	(x0) circle [radius=0.1, fill=black, red] node[above left] {$0$}
	(x1) circle [radius=0.1, fill=black] node[above left] {$1$}
	(x2) circle [radius=0.1, fill=black] node[above left] {$1$}
	(x3) circle [radius=0.1, fill=black] node[above left] {$0$}
	(x4) circle [radius=0.1, fill=black] node[above left] {$0$}
	(x5) circle [radius=0.1, fill=black] node[above left, xshift=2] {$0$}
	(x6) circle [radius=0.1, fill=black] node[above, yshift=2] {$0$}
	(x7) circle [radius=0.1, fill=black] node[above right, xshift=-2] {$0$}
	(x8) circle [radius=0.1, fill=black] node[above right] {$0$}
	(x9) circle [radius=0.1, fill=black] node[above right] {$\ddots$}
	(x10) circle [radius=0.1, fill=black] node[above right] {$0$}
	(x11) circle [radius=0.1, fill=black] node[above right] {$0$}
	(x12) circle [radius=0.1, fill=black] node[above right] {$x$};
	\draw [densely dotted, bend right=40] (x0) edge (x1);
	\draw [densely dotted, bend right=0] (x1) edge (x12);
	\end{tikzpicture}
	$= I^\dr(0; 1; x) \otimes I^\dr(1; 1, \{0\}^n; x)$.
\end{center}
Suppose now we take some point $x_j$ as the next point, and some point $x_k$, $k \geq j$ as the last point in the subset. We notice now that we must also take every point $x_i$ with $j \leq i \leq k$, else we will contribute a term $I^\dr(0; 0; 0) = 0$ to the product side. Hence the only other terms which contribute are of the form
\begin{center}
	\begin{tikzpicture}[style = {very thick}, scale = 0.5,baseline={([yshift=-1em]current bounding box.center)}]
	\draw (3,0) arc [start angle=0, end angle = 180, radius=3]
	node[pos=0.000, name=x12, inner sep=0pt] {}
	node[pos=0.083, name=x11, inner sep=0pt] {}
	node[pos=0.167, name=x10, inner sep=0pt] {}
	node[pos=0.250, name=x9, inner sep=0pt] {}
	node[pos=0.333, name=x8, inner sep=0pt] {}
	node[pos=0.417, name=x7, inner sep=0pt] {}
	node[pos=0.500, name=x6, inner sep=0pt] {}
	node[pos=0.583, name=x5, inner sep=0pt] {}
	node[pos=0.667, name=x4, inner sep=0pt] {}
	node[pos=0.750, name=x3, inner sep=0pt] {}
	node[pos=0.833, name=x2, inner sep=0pt] {}
	node[pos=0.917, name=x1, inner sep=0pt] {}
	node[pos=1.0, name=x0, inner sep=0pt] {};
	\draw (-3.5,0) -- (3.5,0);
	\filldraw
	(x0) circle [radius=0.1, fill=black, red] node[above left] {$0$}
	(x1) circle [radius=0.1, fill=black] node[above left] {$1$}
	(x2) circle [radius=0.1, fill=black] node[above left] {$1$}
	(x3) circle [radius=0.1, fill=black] node[above left] {$0$}
	(x4) circle [radius=0.1, fill=black] node[above left] {$0$}
	(x5) circle [radius=0.1, fill=black] node[above left, xshift=2] {$0$}
	(x6) circle [radius=0.1, fill=black] node[above, yshift=2] {$0$}
	(x7) circle [radius=0.1, fill=black] node[above right, xshift=-2] {$0$}
	(x8) circle [radius=0.1, fill=black] node[above right] {$0$}
	(x9) circle [radius=0.1, fill=black] node[above right] {$\ddots$}
	(x10) circle [radius=0.1, fill=black] node[above right] {$0$}
	(x11) circle [radius=0.1, fill=black] node[above right] {$0$}
	(x12) circle [radius=0.1, fill=black] node[above right] {$x$};
	\draw [densely dotted, bend right=70] (x0) edge (x1);
	\draw [densely dotted, bend right=30] (x1) edge (x4);
	\draw [densely dotted, bend right=70] (x4) edge (x5);
	\draw [densely dotted, bend right=70] (x5) edge (x6);
	\draw [densely dotted, bend right=70] (x6) edge (x7);
	\draw [densely dotted, bend right=70] (x7) edge (x8);
	\draw [densely dotted, bend right=70] (x8) edge (x9);
	\draw [densely dotted, bend right=30] (x9) edge (x12);
	\end{tikzpicture}
\end{center}
Depending on whether $x_2$ is part of the subset or not, the terms have different forms as functions, namely
\begin{center}
	\begin{tikzpicture}[style = {very thick}, scale = 0.5,baseline={([yshift=-1em]current bounding box.center)}]
	\draw (3,0) arc [start angle=0, end angle = 180, radius=3]
	node[pos=0.000, name=x12, inner sep=0pt] {}
	node[pos=0.083, name=x11, inner sep=0pt] {}
	node[pos=0.167, name=x10, inner sep=0pt] {}
	node[pos=0.250, name=x9, inner sep=0pt] {}
	node[pos=0.333, name=x8, inner sep=0pt] {}
	node[pos=0.417, name=x7, inner sep=0pt] {}
	node[pos=0.500, name=x6, inner sep=0pt] {}
	node[pos=0.583, name=x5, inner sep=0pt] {}
	node[pos=0.667, name=x4, inner sep=0pt] {}
	node[pos=0.750, name=x3, inner sep=0pt] {}
	node[pos=0.833, name=x2, inner sep=0pt] {}
	node[pos=0.917, name=x1, inner sep=0pt] {}
	node[pos=1.0, name=x0, inner sep=0pt] {};
	\draw (-3.5,0) -- (3.5,0);
	\filldraw
	(x0) circle [radius=0.1, fill=black, red] node[above left] {$0$}
	(x1) circle [radius=0.1, fill=black] node[above left] {$1$}
	(x2) circle [radius=0.1, fill=black] node[above left] {$1$}
	(x3) circle [radius=0.1, fill=black] node[above left] {$0$}
	(x4) circle [radius=0.1, fill=black] node[above left, xshift=3, yshift=-1] {$\iddots$}
	(x5) circle [radius=0.1, fill=black] node[above left, xshift=2] {$0$}
	(x6) circle [radius=0.1, fill=black] node[above, yshift=2] {$0$}
	(x7) circle [radius=0.1, fill=black] node[above right, xshift=-2] {$0$}
	(x8) circle [radius=0.1, fill=black] node[above right] {$0$}
	(x9) circle [radius=0.1, fill=black] node[above right] {$0$}
	(x10) circle [radius=0.1, fill=black] node[above right] {$\ddots$}
	(x11) circle [radius=0.1, fill=black] node[above right] {$0$}
	(x12) circle [radius=0.1, fill=black] node[above right] {$x$};
	\draw [densely dotted, bend right=70] (x0) edge (x1);
	\draw [densely dotted, bend right=70] (x1) edge (x2);
	\draw [densely dotted, bend right=70] (x2) edge (x3);
	\draw [densely dotted, bend right=70] (x3) edge (x4);
	\draw [densely dotted, bend right=70] (x4) edge (x5);
	\draw [densely dotted, bend right=30] (x5) edge (x6);
	\draw [densely dotted, bend right=70] (x6) edge (x7);
	\draw [densely dotted, bend right=70] (x7) edge (x8);
	\draw [densely dotted, bend right=30] (x8) edge (x12);
	\end{tikzpicture}
	$	= S^\dr_{n-j,2}(x) \otimes \dfrac{(\log^\dr x)^j}{j!} \qquad \text{for}\quad j = 1, \ldots, n$,
\end{center}\begin{center}
$$
	\begin{tikzpicture}[style = {very thick}, scale = 0.5,baseline={([yshift=-1em]current bounding box.center)}]
	\draw (3,0) arc [start angle=0, end angle = 180, radius=3]
	node[pos=0.000, name=x12, inner sep=0pt] {}
	node[pos=0.083, name=x11, inner sep=0pt] {}
	node[pos=0.167, name=x10, inner sep=0pt] {}
	node[pos=0.250, name=x9, inner sep=0pt] {}
	node[pos=0.333, name=x8, inner sep=0pt] {}
	node[pos=0.417, name=x7, inner sep=0pt] {}
	node[pos=0.500, name=x6, inner sep=0pt] {}
	node[pos=0.583, name=x5, inner sep=0pt] {}
	node[pos=0.667, name=x4, inner sep=0pt] {}
	node[pos=0.750, name=x3, inner sep=0pt] {}
	node[pos=0.833, name=x2, inner sep=0pt] {}
	node[pos=0.917, name=x1, inner sep=0pt] {}
	node[pos=1.0, name=x0, inner sep=0pt] {};
	\draw (-3.5,0) -- (3.5,0);
	\filldraw
	(x0) circle [radius=0.1, fill=black, red] node[above left] {$0$}
	(x1) circle [radius=0.1, fill=black] node[above left] {$1$}
	(x2) circle [radius=0.1, fill=black] node[above left] {$1$}
	(x3) circle [radius=0.1, fill=black] node[above left] {$0$}
	(x4) circle [radius=0.1, fill=black] node[above left, xshift=3, yshift=-1] {$\iddots$}
	(x5) circle [radius=0.1, fill=black] node[above left, xshift=2] {$0$}
	(x6) circle [radius=0.1, fill=black] node[above, yshift=2] {$0$}
	(x7) circle [radius=0.1, fill=black] node[above right, xshift=-2] {$0$}
	(x8) circle [radius=0.1, fill=black] node[above right] {$0$}
	(x9) circle [radius=0.1, fill=black] node[above right] {$0$}
	(x10) circle [radius=0.1, fill=black] node[above right] {$\ddots$}
	(x11) circle [radius=0.1, fill=black] node[above right] {$0$}
	(x12) circle [radius=0.1, fill=black] node[above right] {$x$};
	\draw [densely dotted, bend right=70] (x0) edge (x1);
	\draw [densely dotted, bend right=70] (x1) edge (x5);
	\draw [densely dotted, bend right=30] (x5) edge (x6);
	\draw [densely dotted, bend right=70] (x6) edge (x7);
	\draw [densely dotted, bend right=70] (x7) edge (x8);
	\draw [densely dotted, bend right=30] (x8) edge (x12);
	\end{tikzpicture}
	\begin{aligned} &= -\li^\dr_{n+2-k-j}(x) \otimes \zeta^\dr_k \cdot \frac{(\log^\dr x)^{j}}{j!}
\\ & \quad \text{for}\quad k \geq 1\quad \text{odd},\quad j \geq 0\quad \text{with}\quad k+j \leq n,
	\end{aligned}
$$
\end{center}
where the term $S^\dr_{0,2}(x) = I^\dr(0;1,1;x)$ is interpreted as $\frac{1}{2!} (\log^\dr(1-x))^2$ via the integral representation of $S^\dr_{n,2}(x)$ and the shuffle product thereof. Recall also that the terms $\zeta^\dr_{2k} = 0$ vanish on the right hand factor of the coproduct.

Hence we obtain
\begin{align*}
\Delta S^\dr_{n,2}(x) = {} & 1 \otimes S^\dr_{n,2}(x) + S^\dr_{n,2}(x) \otimes 1 + \log^\dr(1-x) \otimes I^\dr(1; 1, \{0\}^n; x)
\\
& - \sum_{\substack{k=3 \\ \text{$k$ odd}}}^{n+1} \sum_{j=0}^{n-k} \li^\dr_{n+2-k-j}(x) \otimes \zeta^\dr_k \cdot \frac{(\log^\dr x)^j}{j!}
 + \sum_{j=1}^n S^\dr_{n-j,2}(x) \otimes \frac{(\log^\dr x)^j}{j!} .
\end{align*}
We note moreover that via decomposition of paths and the regularisation $I^\mot\big(1; 1, \{0\}^{j-1}; 0\big) = \zeta^\mot_j$ we have
\begin{gather*}
I^\dr(1; 1, \{0\}^n; x) = -\li^\dr_{n+1}(x) + \sum_{k=2}^{n+1} \zeta^\dr_k \frac{(\log^\dr x)^{n+1-k}}{(n+1-k)!} ,
\end{gather*}
which can be substituted into the above, along the additional restriction ``$k$ odd'' as $ \zeta^\dr_{2k}$ does not contribute to the right hand factor of the coproduct. Then the summation in this term can in fact be combined with the double-sum given above, as the $j=n+1-k$ term of the inner sum, since $\log(1-x) = -\li_1(x)$. Overall we obtain (after reversing both $j$ sums for later convenience, and changing $k = k' + 2$)
\begin{align*}
\Delta S^\dr_{n,2}(x) = {}& 1 \otimes S^\dr_{n,2}(x) + S^\dr_{n,2}(x) \otimes 1 - \log^\dr(1-x) \otimes \li^\dr_{n+1}(x)
\\
& - \sum_{\substack{k=1 \\ \text{$k$ odd}}}^{n-1} \sum_{j=1}^{n-k} \li^\dr_{j}(x) \otimes \zeta^\dr_{k+2}\frac{(\log^\dr x)^{n-k-j}}{(n-k-j)!}
 + \sum_{j=0}^{n-1} S^\dr_{j,2}(x) \otimes \frac{(\log^\dr x)^{n-j}}{(n-j)!} .
\end{align*}

\subsection[Antipode of \protect{S\textasciicircum{}\{dr\}\_\{n,2\}(x)}]
{Antipode of $\boldsymbol{S^\dr_{n,2}(x)}$}

Next we compute the antipode; the general recursion says
\begin{gather*}
S\big(S^\dr_{n,2}(x)\big) = -S^\dr_{n,2}(x) - m(\operatorname{id} \otimes S) \Delta' S^\dr_{n,2}(x) .
\end{gather*}
The only terms appearing on the right hand side of the coproduct, whose antipode we recursively need, are $S(\log^\dr x) = -\log^{\dr}x$, $S(\zeta_k^{\dr}) = -\zeta^{\dr}_k${ for $k$ odd}, as odd Riemann zeta values are primitives for $\Delta$, and the previously computed
\begin{align*}
	S\big(\li^\dr_n(x)\big) = -\li^\dr_n(x) - \sum_{k=1}^{n-1}\frac{(-\log^\dr x)^k}{k!}\li^\dr_{n-k}(x)
 = - \sum_{k=1}^{n} \li^\dr_{k}(x) \frac{(-\log^\dr x)^{n-k}}{(n-k)!} ,
\end{align*} as given in \eqref{eq:Delta_Li}. Since we have these we can directly evaluate the antipode $S(S^\dr_{n,2}(x))$ to see
\begin{align*}
S\big(S^\dr_{n,2}(x)\big) = &
- \sum_{j=0}^n S^\dr_{j,2}(x) \frac{(-\log^\dr x)^{n-j}}{(n-j)!}
 - \sum_{j=1}^{n+1} \li^\dr_{j}(x) \log^\dr(1-x) \frac{(-\log^\dr x)^{n+1-j}}{(n+1-j)!} \\
& - \sum_{\substack{k=1 \\ \text{$k$ odd}}}^{n-1} \sum_{j=1}^{n-k} \li^\dr_{j}(x) \zeta^\dr_{k+2} \frac{(-\log^\dr x)^{n-j-k}}{(n-j-k)!} .
\end{align*}

\subsection[Single-valued version \protect{sv\textasciicircum{}m} \protect{S\textasciicircum{}m\_\{n,2\}(x)}]
 {Single-valued version $\boldsymbol{\sv^\mot S^\mot_{n,2}(x)}$}

	From this we are in a position to write the single-valued version via
\begin{gather*}
\sv^\mot S^\mot_{n,2}(x) = \per \circ m (F_\infty \Sigma \otimes \id) \widetilde{\Delta} S^\dr_{n,2}(x).
\end{gather*}
We shall treat each term of the above coproduct in turn.

Firstly
\begin{gather*}
 \per \circ m(F_\infty \Sigma \otimes \id) \big(1 \otimes S^\dr_{n,2}(x) \!- \log^\dr(1-x) \otimes \li^\dr_{n+1}(x)\big)
= S_{n,2}(x)\! - \log(1\!-\bar{x}) \li_{n+1}(x) .
\end{gather*}
More complicated is the term
\begin{gather*}
 \per \circ m(F_\infty \Sigma \otimes \id) \Bigg({-}\sum_{\substack{k=1 \\ \text{$k$ odd}}}^{n-1} \sum_{j=1}^{n-k} \li^\dr_{j}(x) \otimes \zeta^\dr_{k+2} \frac{(\log^\dr x)^{n-k-j}}{(n-k-j)!} \Bigg)
 \\ \qquad
{}= \sum_{\substack{k=1 \\ \text{$k$ odd}}}^{n-1} \sum_{j=1}^{n-k} \Bigg((-1)^j \sum_{\ell=1}^j \li_\ell(\bar{x}) \frac{(-\log\bar{x})^{j-\ell}}{(j-\ell)!} \Bigg) \zeta_{k+2} \frac{\log^{n-k-j}x}{(n-k-j)!} .
\end{gather*}
Now switch the order of summation between $j$ and $\ell$, then set $j' = j - \ell$ in the inner-most sum. We obtain
\begin{gather*}
{}= \sum_{\substack{k=1 \\ \text{$k$ odd}}}^{n-1} \sum_{\ell=1}^{n-k} (-1)^\ell \li_\ell(\bar{x}) \zeta_{k+2}\sum_{j=0}^{n-k-\ell} \frac{(\log\bar{x})^j}{j!} \frac{(\log x)^{n-k-\ell-j}}{(n-k-j-\ell)!} \\
{} = \sum_{\substack{k=1 \\ \text{$k$ odd}}}^{n-1} \sum_{\ell=1}^{n-k} (-1)^\ell \li_\ell(\bar{x}) \zeta_{k+2} \frac{(\log|x|^2 )^{n-k-\ell}}{(n-k-\ell)!} .
\end{gather*}
Here we have written $\log\bar{x} + \log x = \log|x|^2$, and applied the binomial theorem to evaluate the inner-most sum.

Finally, we can compute (note we take $j=n$ in the sum, accounting also for the $S_{n,2}(x) \otimes 1$ term)\vspace{-1ex}
\begin{gather*}
\per \circ m (F_\infty \Sigma \otimes \id) \Bigg( \sum_{j=0}^{n-1} S^\dr_{j,2}(x) \otimes \frac{(\log^\dr x)^{n-j}}{(n-j)!} \Bigg)
\\ \qquad
{}= -\sum_{j=0}^n (-1)^j \Bigg\{ \sum_{\ell=0}^j S_{\ell,2}(\bar{x}) \frac{(-\log\bar{x})^{j-\ell}}{(j-\ell)!}
	- \sum_{\ell=1}^{j+1} \li_\ell(\bar{x}) \log(1 - \bar{x}) \frac{(-\log\bar{x})^{j+1-\ell}}{(j+1-\ell)!}
\\ \qquad\hphantom{= -\sum_{j=0}^n (-1)^j \Bigg\{}
	-\sum_{\substack{k=1 \\ \text{$k$ odd}}}^{j-1} \sum_{\ell=1}^{j-k} \li_\ell(\bar{x}) \zeta_{k+2} \frac{(-\log\bar{x})^{j-\ell-k}}{(j-\ell-k)!} \Bigg\} \frac{(\log x)^{n-j}}{(n-j)!} .
\end{gather*}
Like previously, we interchange the $j$ and $\ell$ sums, or more accurately move the $j$ sum inside the~$\ell$ sum for the third term. In each case, the resulting sum of $j$ can be evaluated as a power of $\log x + \log\bar{x} = \log|x|^2$ via the binomial theorem. We obtain
\begin{align*}
	 = & - \sum_{\ell=0}^n (-1)^\ell S_{\ell,2}(\bar{x}) \frac{\log^{n-\ell}|x|}{(n-\ell)!} + \sum_{\ell=1}^{n+1} (-1)^\ell \li_{\ell}(\bar{x}) \log(1- \bar{x}) \frac{(\log|x|^2)^{n+1 - \ell}}{(n+1-\ell)!} \\
	& + \sum_{\substack{k=1 \\ \text{$k$ odd}}}^{n-1} \sum_{\ell=1}^{n-k} (-1)^\ell \li_\ell(\bar{x}) \zeta_{k+2} \frac{(\log|x|^2 )^{n-k-\ell}}{(n-k-\ell)!} .
\end{align*}
We note the simplification $-(-1)^{\ell+k} = (-1)^\ell$ since $k$ is odd has been used to obtain the last term. Moreover, the resulting term is exactly the same as already obtained above.

Summing the above 3 contributions, and rewriting slightly, gives the following formula for $\sv^\mot S^\mot_{n,2}(x)$, as stated in~\cite[Section~4.3]{Charlton:2019gvp}. Namely
\begin{align*}
\sv^\mot S_{n,2}^\mot(x) = {} & \big( S_{n,2}(x)+ (-1)^{n+1} S_{n,2}(\bar{x}) \big)
 -\log(1-\bar{x}) \big(\li_{n+1}(x) + (-1)^n\li_{n+1}(\bar{x}) \big)
 \\
& -\sum _{j=0}^{n-1} \frac{(-1)^j}{(n-j)!} \log ^{n-j}|x|^2 \big( S_{j,2}(\bar{x}) + \log (1-\bar{x}) \li_{j+1}(\bar{x}) \big)
\\
& +\sum_{\substack{k=1 \\ \text{$k$ odd}}}^{n-1} \sum_{j=1}^{n-k} \frac{2
(-1)^j \zeta_{k+2}}{(n-j-k)!} \li_j(\bar{x}) \log^{n-j-k}|x|^2 .
\end{align*}

\subsection[Clean version of \protect{S\_\{n,2\}(x)}]
 {Clean version of $\boldsymbol{S_{n,2}(x)}$}

We also briefly repeat the calculation of $\Pi S_{n,2}^{\dr}(x)$, which was completed in detail in~\cite{Charlton:2019gvp}. Here we proceed directly via the already calculated coproduct of $S_{n,2}^\dr(x)$, whereas the calculation in \cite{Charlton:2019gvp} was reduced to an analysis of which terms contribute in the infinitesimal coproduct. From~\eqref{eqn:Drecursion}, we have
\begin{gather*}
	D\big(S_{n,2}^\dr(x)\big) = (n+2) S_{n,2}^\dr(x) - m(\id \otimes (Y \cdot \Pi)) \Delta' S^\dr_{n,2} .
\end{gather*}
Again, because of the projector in the second tensor factor, we are free to ignore all products in the coproduct while evaluating this. From the calculation above of $\Delta S^\dr_{n,2}(x)$, we have\vspace{-1ex}
\begin{align*}
	\Delta' S^\dr_{n,2}(x) \equiv {} & -\log^\dr(1-x) \otimes \li_{n+1}^\dr(x) - \sum_{\substack{k=1 \\ \text{$k$ odd}}}^{n-1} \li_{n-k}^\dr(x) \otimes \zeta^\dr_{k+2} \\
		& + S_{n-1,2}^\dr(x) \otimes \log^\dr x \quad \pmod{\text{right-$\otimes$-factor products}} .
\end{align*}
Hence
\begin{align*}
	\Pi S^\dr_{n,2}(x) = {} & S_{n,2}^\dr(x) - \frac{1}{n+2} m(\id \otimes (Y \cdot \Pi)) \big( \Delta' S^\dr_{n,2}(x) \big) \\
	= {} & S_{n,2}^\dr(x) - S_{n-1,2}^\dr(x) \frac{\log^\dr x}{n+2} + \frac{n+1}{n+2} \log^\dr(1-x) \li_{n+1}^\dr(x) \\
	& - \frac{1}{n+2} \log^\dr(1-x) \log^\dr(x) \li_{n}^\dr(x) \\
	& + \sum_{\substack{k=1 \\ \text{$k$ odd}}}^{n-1}\frac{k+2}{n+2} \zeta^\dr_{k+2} \li_{n-k}^\dr(x) .
\end{align*}
Here we have applied the result that $\Pi \li_n^\dr(x) = \li_n^\dr(x) - \frac{1}{n} \log^\dr x \li_{n-1}^\dr(x)$, which can be obtained from \eqref{eqn:PiIn}. Likewise, we have also used that $ \Pi \zeta^\dr_{2k+1} = \zeta^\dr_{2k+1}$, since $\zeta_{2k+1}^{\dr}$ is a primitive for the coproduct.

\subsection[Clean single-valued \protect{S\_\{n,2\}(x)}]
{Clean single-valued $\boldsymbol{S_{n,2}(x)}$}

Application of the period and the single-valued map to the expression $\Pi S_{n,2}^\dr(x)$, using the previous computations of the single-valued versions of $\li_n^\dr(x)$ and $\zeta^\dr_{2k+1}$, and the computation of $S_{n,2}^\dr(x)$ directly above gives us an expression for $\mathcal{R} S_{n,2}^\dr(x)$. We can then define the clean single-valued version of $S_{n,2}^\dr$.
\begin{Definition}
The \emph{clean single-valued Nielsen polylogarithm $S^\mathrm{csv}_{n,2}(x)$} is defined by $ S^\mathrm{csv}_{n,2}(x) := \mathfrak{R}_n \mathcal{R} S_{n,2}^\dr(x)$, where $\mathcal{R} S_{n,2}(x)$ is as follows
\begin{align*}
\mathcal{R} S_{n,2}^\dr(x) ={}& \big(S_{n,2}(x)-(-1)^{n+2} S_{n,2}(\bar{x}) \big)
-\frac{\log |x|^2}{n+2} \big( S_{n-1,2}(x) + \log (1-x) \li_n(x) \big)
\\
& + \frac{1}{n+2} \big((n+1) \log(1-x) - \log(1-\bar{x})\big) \big( \li_{n+1}(x) - (-1)^{n+1} \li_{n+1}(\bar{x}) \big)
\\
& +\!\sum_{j=1}^{n} \!\frac{(-1)^j }{n\!+\!2} \Big\{\!(j+1) S_{j-1,2}(\bar{x})
 \!-\! \big(j \log (1\!-\!x)\!-\! \log(1\!-\!\bar{x})\big) \li_j(\bar{x}) \!\Big\} \frac{\log ^{n-j+1}|x|^2}{(n-j+1)!}
 \\
& + \sum_{\substack{k=1 \\ \text{$k$ odd}}}^{n-1}\frac{2 \zeta_{k+2}}{n+2} \Bigg\{ (k+2) \li_{n-k}(x) + \sum _{j=1}^{n-k} j (-1)^j \frac{\log ^{n-j-k}|x|^2}{ (n-j-k)!} \li_j(\bar{x}) \Bigg\} .
\end{align*}
\end{Definition}

These clean single-valued functions (or more precisely the version without ${\mathfrak R}_n$, which still satisfies Theorem \ref{thm:main}) are used in \cite{Charlton:2019gvp} to obtain numerically verifiable identities and reductions for Nielsen polylogarithms, including numerical results on their special values.

\section{Numerical evaluations}\label{sec:numerical}

In this last section, we derive a few numerical evaluations of depth 2 MPL's through use of the clean single-valued functions. These evaluations would perhaps otherwise not be obtainable in such a straightforward manner. For the sake of simplicity, we restrict to one identity in weight 4 and one in weight 5 obtained from the 2-term symmetry of $I_{3,1}$ relating $I_{3,1}(x,y)$ and $ I_{3,1}(1-x,y)$ and a similar identity for $I_{4,1}(x,y)$. Further treatment of special values of Nielsen polylogarithms \big(in particular $S_{3,2}$ at elements of the weight 2 Bloch group, such as $ S_{3,2}(\phi)$ for $\phi = \frac{1 + \sqrt{5}}{2}$\big) which were obtained from this clean single-valued procedure, can be found in \cite{Charlton:2019gvp}.

\subsection{Weight 4}
We recall first an identity that ``reduces a depth 2 combination to depth 1'', which was predicted by Goncharov, with $\li_4$-terms first made explicit in \cite{Gangl_weight4}.
For the following, we note that $\licsv_4(x) = \Icsv_4(x)$, because of the symbol level identity $ \li_4(x) = I_4(x)$ modulo products.
\begin{Proposition}
	The following identity of clean single-valued functions holds
\begin{gather*}
\Icsv_{3,1}(1-x,y)+\Icsv_{3,1}(x,y) \\
\quad{} =
 \licsv_4\Big(\frac{1-x}{1-y}\Big)-\licsv_4\Big(\frac{1-y}{x}\Big)
-3 \licsv_4\Big(\frac{y}{1-x}\Big)
-3 \licsv_4\Big(\frac{y}{x}\Big)
+\licsv_4\Big(\frac{y}{y-1}\Big)\\
\quad\hphantom{=}{}
 -\frac{1}{2} \licsv_4\Big(\frac{(1-x) y}{x (1-y)}\Big)
-\frac{1}{2} \licsv_4\Big(\frac{x y}{(1-x) (1-y)}\Big)
+\frac{1}{2} \licsv_4\Big(\frac{(1-y) y}{(1-x) x}\Big) .
\end{gather*}

\begin{proof}
 As indicated above, a (slight variant of the) corresponding identity was given in \cite{Gangl_weight4} on the level of the symbol modulo products, and with $\Icsv_{3,1}$ and $\licsv_{4}$ replaced by $I_{3,1}$ and $\li_{4}$.
	Hence the difference of the two sides in the equation stated above is a constant by Theorem \ref{thm:main}. This constant must be 0 since each side vanishes on the real line, as we take the imaginary part in the definition of the functions.
\end{proof}
\end{Proposition}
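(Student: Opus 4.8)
The plan is to reduce the assertion to a symbol-level identity already in the literature, use Theorem~\ref{thm:main} to promote it to an exact identity between clean single-valued functions, and fix the one remaining constant by a single evaluation. As a preliminary observation, the whole right-hand side may be regarded as a $\mathbb{Q}$-linear combination of clean single-valued MPLs: $\licsv_4$ is one, and $\Icsv_{3,1}$ is one by definition, while the normalisation $\licsv_4=\Icsv_4$ (itself an instance of Theorem~\ref{thm:main}, since $\li_4$ and $I_4$ agree modulo products at the level of the symbol) makes the two families interchangeable. So Theorem~\ref{thm:main} will apply to the displayed combination, leaving at most a constant of integration to be determined.

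First I would recall from \cite{Gangl_weight4} the weight-$4$ depth reduction for $I_{3,1}$: there one shows that $I_{3,1}(1-x,y)+I_{3,1}(x,y)$ equals, modulo products of lower-weight MPLs and at the level of the symbol, a $\mathbb{Q}$-linear combination of classical tetralogarithms $\li_4$ at rational functions of $x$ and $y$. Since at the symbol-modulo-products level one may freely apply the functional equations of $\li_4$ (inversion, reflection, distribution), and since $\li_4(z)$ and $I_4(z)$ agree there, bringing that identity into the precise shape displayed---with the eight arguments $\frac{1-x}{1-y},\ \frac{1-y}{x},\ \frac{y}{1-x},\dots$---is a matter of bookkeeping, which is exactly why the cited identity is only a ``slight variant''. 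Equivalently, one may verify directly that the symbol of the displayed combination, with $\Icsv_{3,1}\mapsto I_{3,1}$ and $\licsv_4\mapsto\li_4$, is a sum of products, using the standard symbol formulas for $I_{3,1}$ and $\li_4$. Lifting to motivic MPLs, applying $\cR$ (which annihilates $\cPm_{\mathrm{MPL},>0}\cdot\cPm_{\mathrm{MPL},>0}$ by Proposition~\ref{prop:ker}) and then $\mathfrak{R}_4$ shows that the two sides of the asserted identity differ only by a constant, independent of $x$ and $y$.

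Finally I would fix this constant by evaluating on the real locus. For real $x,y$ in the domain the single-valued polylogarithms $\svmot$ take real values (the complex-conjugate arguments coincide with the variables themselves), so $\cR$ applied to each motivic MPL occurring is real; since the weight $n=4$ is even, $\mathfrak{R}_4=\Im$ annihilates everything and both sides of the identity vanish identically there, whence the constant is $0$. The only substantive ingredient is the symbol-level identity itself, imported from \cite{Gangl_weight4}; the main point requiring care in this proof is the reduction in the second paragraph, i.e.\ confirming that this particular packaging of eight tetralogarithm arguments reproduces that identity modulo products at symbol level --- after which Theorem~\ref{thm:main} and the parity of the weight make everything else automatic.
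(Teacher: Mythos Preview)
Your proposal is correct and follows essentially the same route as the paper's own proof: cite the symbol-level depth-reduction identity from \cite{Gangl_weight4}, invoke Theorem~\ref{thm:main} to obtain the clean single-valued identity up to an additive constant, and kill that constant by observing that both sides vanish on the real line since $\mathfrak{R}_4=\Im$ and all the single-valued quantities are real there. The only additions you make---spelling out why $\licsv_4=\Icsv_4$ and remarking that the ``slight variant'' amounts to rewriting via the trivial $\li_4$ functional equations---are helpful clarifications but do not change the argument.
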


Taking $x = \frac{1}{2}$, $y = {\rm i}$ (any non-real $y$ will also give a reduction) in the above identity leads~to
\begin{align*}
	2 \Icsv_{3,1}\Big(\frac{1}{2},{\rm i}\Big) = & -\licsv_4\Big(\frac{-1 + {\rm i}}{2} \Big) - 6 \licsv_4(2{\rm i}) + \licsv_4\Big(\frac{1+{\rm i}}{4} \Big)
\\
& + \licsv_4\Big(\frac{1-{\rm i}}{2} \Big)- \licsv_4(2 - 2{\rm i})+ \frac{1}{2}\licsv_4(4 + 4{\rm i}).
\end{align*}
One can apply the inversion relation $\licsv_4(x) = -\licsv_4\big(x^{-1}\big)$, and the inversion relation (with $x = \frac{1}{2}$, $y = {\rm i}$) from Proposition~\ref{prop:inversion_depth_2}, namely
\begin{gather*}
\Icsv_{3,1}(2,-{\rm i}) - \Icsv_{3,1}\Big(\frac{1}{2},{\rm i}\Big) = \licsv_4({\rm i}) - 3 \licsv_4\Big({-}\frac{\rm i}{2}\Big) - \licsv_4\Big(\frac{1}{2}\Big) ,
\end{gather*}
\big(wherein $\licsv_4\big(\frac{1}{2}\big) = 0$ as we take the imaginary part\big) to obtain
\begin{align*}
2 \Icsv_{3,1}(2,-{\rm i}) = {}& -\licsv_4\Big(\frac{-1 + {\rm i}}{2} \Big) + 2 \licsv_4({\rm i}) - \frac{1}{2} \licsv_4\Big(\frac{1-{\rm i}}{8} \Big)
\\
& + 2 \licsv_4\Big(\frac{1+{\rm i}}{4} \Big) + \licsv_4\Big(\frac{1-{\rm i}}{2}\Big) .
\end{align*}
It is possible to compute explicitly how $\Icsv_{3,1}(x,y)$ is expressed using the classical polylogarithms and iterated integrals (an implementation is available in the \texttt{PolyLogTools} package~\cite{Duhr:2019tlz}). One then obtains that
\begin{gather*}
2\Icsv_{3,1}(2, -{\rm i}) =
\Im \bigg[ 4 \li_{3,1}\Big({-}\frac{\rm i}{2}, {\rm i}\Big) + 4 \log2 \li_{2,1}\Big({-}\frac{\rm i}{2}, {\rm i}\Big) + 2\zeta_2 \li_2({\rm i})
\\ \hphantom{2\Icsv_{3,1}(2, -{\rm i}) =}
\qquad{}- \Big(2 \li_2\Big(\frac{2+{\rm i}}{5}\Big)
-2 \li_2\Big(\frac{4+2 {\rm i}}{5}\Big)+\log \Big(\frac{4+3{\rm i}}{5}\Big) \log 2 \Big) \log ^22 \bigg]
\\ \hphantom{2\Icsv_{3,1}(2, -{\rm i}) =}
{}-\frac{\pi}{8}\Big(\li_3\Big({-}\frac{1}{4}\Big)+2 \li_2\Big({-}\frac{1}{4}\Big) \log 2
+ 4 \log ^32- 2 \log 5 \log ^22 \Big) ,
\end{gather*}
with
\begin{gather*}
\li_{m_1,m_2}(x,y) = I_{m_1,m_2}\Big(\frac{1}{xy},\frac{1}{y}\Big) = \sum_{0 < n_1 < n_2} \frac{x^{n_1} y^{n_2}}{n_1^{m_1} n_2^{m_2}} .
\end{gather*}
Likewise the $\licsv_4$ combination evaluates to
\begin{align*}
	= {} & \Im\bigg[ 2\li_4\Big(\frac{1+{\rm i}}{2}\Big)
	-4 \li_4\Big(\frac{1+{\rm i}}{4}\Big) -\li_4\Big(\frac{1+{\rm i}}{8}\Big)
	-4 \li_4({\rm i})	+2 \li_4\Big(\frac{-1+{\rm i}}{2}\Big)
\\
&+ \Big(\li_3\Big(\frac{-1+{\rm i}}{2}\Big)
	-\frac{5}{2} \li_3\Big(\frac{1+{\rm i}}{8}\Big)
	-6	\li_3\Big(\frac{1+{\rm i}}{4}\Big)
	+ \li_3\Big(\frac{1+{\rm i}}{2}\Big)\Big) \log 2
\\
& + \Big(\li_2({\rm i})-\frac{9}{2} \li_2\Big(\frac{1+{\rm i}}{4}\Big)
	-\frac{25}{8}\li_2\Big(\frac{1+{\rm i}}{8}\Big)
	+\frac{1}{4} \li_2\Big(\frac{-1+{\rm i}}{2}\Big)\Big) \log ^22
	\\
	& +\frac{1}{96} \Big({-}2 {\rm i} \pi
	+106 \log \Big(\frac{4 - 3 {\rm i}}{5}\Big)+125 \log \Big(\frac{24 - 7{\rm i}}{25}\Big) \Big) \log ^32 \bigg] .
\end{align*}

By equating these two results, one obtains an explicit reduction for the value $\Im\big(\li_{3,1}\big({-}\frac{\rm i}{2}, {\rm i}\big)\big)$ in terms lower depth and products. \big(Moreover, the depth 2 term $ \li_{2,1}\big({-}\frac{\rm i}{2},{\rm i}\big)$ is expressible purely in terms of $\li_3$ and products, via the known reduction of all weight 3 MPL's to depth~1. This is essentially a consequence of \cite[Appendix A.3.5(2)]{LewinBook}, subsequently also re-established in~\cite{Kellerhals_GAFA}.)

Utilising a well-known lattice reduction algorithm (``LLL''), we can find the following simpler numerically checked reduction
\begin{gather*}
\li_{3,1}\Big({-}\frac{\rm i}{2}, {\rm i}\Big) + \log2 \li_{2,1}\Big({-}\frac{\rm i}{2},{\rm i}\Big)\\
\quad{}\overset{?}={}
 \Im\bigg[3\li_4({\rm i})+28	\li_4\Big(\frac{1+{\rm i}}{2}\Big)
-36 \li_4\Big(\frac{-1+{\rm i}}{2}\Big)
+14 \li_3\Big(\frac{1+{\rm i}}{2}\Big) \log 2\\
\qquad\qquad{}-18 \li_3\Big(\frac{-1+{\rm i}}{2}\Big) \log 2
 +\Big(\frac{5}{2} \log^22 - \frac{1}{2} \zeta_2 \Big) \li_2({\rm i})
-3\li_2\Big(\frac{-1+{\rm i}}{2}\Big) \log ^22\bigg]
\\
\qquad{} +\frac{\pi}{32} \li_3\Big({-}\frac{1}{4}\Big)+\frac{\pi}{16} \li_2\Big({-}\frac{1}{4}\Big)
\log 2-\frac{7\pi}{24} \log ^32 ,
\end{gather*}
where $\overset{?}{=}$ indicates that this is a conjectural identity checked to several hundreds of digits.
One also notices the following relation amongst the $\licsv_4$ terms above
\begin{gather*}
 37\licsv_4\Big(\frac{-1 \!+ {\rm i}}{2}\Big) - 36 \licsv_4({\rm i}) + \frac{1}{2} \licsv_4\Big(\frac{1\!-{\rm i}}{8} \Big) - 6 \licsv_4\Big(\frac{1\!+{\rm i}}{4}\Big) - 21\licsv_4\Big(\frac{1\!-{\rm i}}{2}\Big)\overset{?}{=} 0 .
\end{gather*}

\subsection{Weight 5}

An identity analogous to the one given in the previous subsection, but now in weight 5, needs four terms in depth~2 and has been given in \cite{Charlton:2019ilv}.

The corresponding clean single-valued identity is as follows.
Again, the symbol level identity $\li_5(x) = - I_5(x)$, modulo products, implies that $\licsv_5(x) = -\Icsv_5(x)$.)

\begin{Proposition}
The following identity holds for the clean single-valued functions
\begin{gather*}
 \frac{1}{2} \big(\Icsv_{4,1}(x,y) + \Icsv_{4,1}\big(x,y^{-1}\big)\big) +
\frac{1}{2} \big(\Icsv_{4,1}(1-x,y) + \Icsv_{4,1}\big(1-x,y^{-1}\big)\big)
\\ \quad
{}= \frac{1}{12} \licsv_5\Big(\frac{x^2 y}{(1-x) (1-y)^2}\Big)
+\frac{1}{12} \licsv_5\Big(\frac{(1-x)^2 y}{x (1-y)^2}\Big)
+\frac{1}{6} \licsv_5\Big(\frac{(1-x) x y^2}{y-1}\Big)
\\ \quad\hphantom{=}
{}+\frac{1}{6} \licsv_5\Big(\frac{(1-y) y}{(1-x) x}\Big)
-\frac{1}{2} \licsv_5\Big(\frac{1-x}{x (y-1)}\Big)
-\frac{1}{2} \licsv_5\Big(\frac{x y}{(1-x) (1-y)}\Big)
\\ \quad\hphantom{=}
{}-\frac{1}{2} \licsv_5\Big(\frac{(1-x) (1-y)}{-x}\Big)
-\frac{1}{2} \licsv_5\Big(\frac{(1-x) y}{x (1-y)}\Big)
\\ \quad\hphantom{=}{}
-\frac{7}{4} \licsv_5\Big(\frac{y}{1-x}\Big)
-\frac{7}{4} \licsv_5((1-x) y)
-\frac{7}{4} \licsv_5\Big(\frac{y}{x}\Big)
-\frac{7}{4} \licsv_5(x y)
\\ \quad\hphantom{=}
{}-\licsv_5\Big(\frac{1-y}{x}\Big)-\licsv_5\Big(\frac{1-x}{1-y}\Big)
-\licsv_5\Big(\frac{(1-x) y}{y-1}\Big)
-\licsv_5\Big(\frac{x y}{y-1}\Big)
\\ \quad\hphantom{=}
{}+\frac{1}{2} \licsv_5(1-x)+\frac{1}{2} \licsv_5\!\Big(\frac{1}{x}\Big)
+\licsv_5\!\Big(\frac{x-1}{x}\Big)
+\licsv_5\!\Big(\frac{1}{1-y}\Big)
+\licsv_5\!\Big(\frac{y}{y-1}\Big) - 2 \zeta_5 .
\end{gather*}

\begin{proof}
 We infer this result from the exact same identity, where $\Icsv_{4,1}$ and $\licsv_5$ have been replaced by $I_{4,1}$ and $\li_5$, respectively, and $2\zeta_5$ removed,
 which was proved (on the level of the symbol, and modulo products) in \cite{CharltonPhD} (a slight variant thereof) and \cite{Charlton:2019ilv}.
	From the previous machinery, the identity now follows, up to a constant $c$ on the right hand side. Taking $x = 1$, $y = 1$ leads to
	\begin{gather*}
		\Icsv_{4,1}(0,1) + \Icsv_{4,1}(1,1) = - 3 \licsv_5(1) + c ,
	\end{gather*}
	wherein the terms $\licsv_5(0) = 0 = \licsv_5(\infty)$ have disappeared. From the shuffle identity
	\begin{gather*}
		\Icsv_{4,1}(0,1) = \Icsv(0; 0, 0, 0, 0, 1; 1) = \Icsv(0; 1, 0, 0, 0, 0; 1) = -\licsv_5(1) = -2\zeta_5 ,
	\end{gather*}
	and the evaluations $\licsv_5(1) = -\Icsv_5(1) = 2 \zeta_5$, and $\Icsv_{4,1}(1,1) = \big({-}\binom{5}{4} - 1\big) \zeta_5 = -6 \zeta_5$ from~\eqref{eq:clean_zeta_d1}, Corollary~\ref{prop:Icsv_root_unity} and Proposition~\ref{prop:depth2_at_1}, we see
	\begin{gather*}
		c = (-2 - 6 + 6) \zeta_5 = -2 \zeta_5 ,
	\end{gather*}
	as claimed.
\end{proof}
\end{Proposition}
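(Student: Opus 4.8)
The plan is to obtain this identity not from scratch but as the clean single-valued shadow of its holomorphic ``parent''. The exact same relation, with every $\Icsv_{4,1}$ replaced by $I_{4,1}$, every $\licsv_5$ replaced by $\li_5$, and the trailing $-2\zeta_5$ deleted, was established modulo products at the level of the symbol in \cite{CharltonPhD} (a slight variant thereof) and \cite{Charlton:2019ilv}. First I would upgrade this symbol identity to a statement about motivic periods: the symbol is the maximal iteration of the coaction, and the kernel of the symbol map on weight-five motivic MPLs is generated by non-trivial products together with the transcendental constants of weight five, so an induction on the weight shows that the corresponding $\mathbb{Q}$-linear combination of motivic $I^{\mot}_{4,1}$'s and $\li^{\mot}_5$'s lies in $\cPm_{\mathrm{MPL},>0}\cdot\cPm_{\mathrm{MPL},>0}$ up to one constant (a $\mathbb{Q}$-multiple of $\zeta^{\mot}_5$, once the part divisible by $(2\pi {\rm i})^{\mot}$ is discarded as usual, since $\cR$ kills it). Theorem~\ref{thm:main} then applies and produces the asserted identity among the real-analytic single-valued functions $\Icsv_{4,1}$ and $\licsv_5$, valid up to one additive constant $c$ -- the ``constant of integration'' invisible to the clean single-valued map, since it sends every product to zero.

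The second step is to pin down $c$ by a specialisation. I would let $(x,y)\to(1,1)$ along the iterated route ``$y\to 1$ first, then $x\to 1$''; along it every argument of the $\licsv_5$-terms tends to $0$, $1$ or $\infty$, and since $\licsv_5(0)=\licsv_5(\infty)=0$ only a handful survive. Using the weight-five inversion relation $\licsv_5(z^{-1})=\licsv_5(z)$ (a consequence of $\licsv_5=-\Icsv_5$ together with the depth-one inversion), one checks that the right-hand side (with $-2\zeta_5$ replaced by the unknown $c$) collapses at $y=1$ to $-3\licsv_5(1-x)-3\licsv_5(x)+\licsv_5\big(\tfrac{x-1}{x}\big)+c$, hence at $x=1$ to $-3\licsv_5(1)+c$, while the left-hand side collapses to $\Icsv_{4,1}(1,1)+\Icsv_{4,1}(0,1)$. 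Now I would insert the explicit evaluations: $\licsv_5(1)=-\Icsv_5(1)=2\zeta_5$ by \eqref{eq:clean_zeta_d1}; $\Icsv_{4,1}(1,1)=\big({-}\binom{5}{4}-1\big)\zeta_5=-6\zeta_5$ by Proposition~\ref{prop:depth2_at_1}; and $\Icsv_{4,1}(0,1)=\Icsv(0;\{0\}^4,1;1)=\Icsv(0;1,\{0\}^4;1)=-\licsv_5(1)=-2\zeta_5$, the middle equality being the reversal relation of Proposition~\ref{prop:reversal}. Solving $\Icsv_{4,1}(1,1)+\Icsv_{4,1}(0,1)=-3\licsv_5(1)+c$, i.e.\ $(-6-2)\zeta_5=-6\zeta_5+c$, then gives $c=-2\zeta_5$, as claimed.

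The hard part is really the bookkeeping in the first step rather than anything conceptual: one must verify that the symbol identity of \cite{CharltonPhD,Charlton:2019ilv} is quoted in precisely the normalisation matching the combination written above, that its lift to motivic periods genuinely lands in the ideal of products -- being mindful of the depth-filtration subtleties specific to $I_{4,1}$ -- and that $\cR$ annihilates all the product contributions while leaving exactly the single constant $c\in\mathbb{Q}\zeta_5$. A milder secondary point is that the specialisation to $(1,1)$ has to be read as a limit taken inside the region where the identity is valid; this is legitimate precisely because the difference of the two sides is a genuine constant, so the limit is path-independent even though the split into surviving summands is not, and one should check that the tangential-base-point regularisation used for $\Icsv(0;\{0\}^4,1;1)$ is the same one underlying the value $\licsv_5(1)=2\zeta_5$.
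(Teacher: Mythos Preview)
Your proposal is correct and follows essentially the same approach as the paper's proof: invoke the symbol-level identity from \cite{CharltonPhD,Charlton:2019ilv}, pass to clean single-valued functions via Theorem~\ref{thm:main} up to a constant, and fix the constant by specialising to $(x,y)=(1,1)$. You give somewhat more detail than the paper (the intermediate expression at $y=1$, and explicit attention to the lift from symbol to motivic periods and the legitimacy of the limit), and you cite Proposition~\ref{prop:reversal} rather than ``the shuffle identity'' for $\Icsv_{4,1}(0,1)$, but these are the same argument in slightly different dress.
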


Now set $x = \frac{1}{2}$, $y = -1$ in this identity (and apply the inversion results $\licsv_5(x) = \licsv_5\big(x^{-1}\big)$ and $\Icsv_{4,1}(2,-1) + \Icsv_{4,1}\big(\frac{1}{2},-1\big) = -\licsv_5(-1) - 4 \licsv_5\big({-}\frac{1}{2}\big) - \licsv_5\big(\frac{1}{2}\big)$ from Proposition~\ref{prop:inversion_depth_2}), and we obtain
\begin{gather*}
\Icsv_{4,1}(2,-1) = -\frac{3}{2} \licsv_5(-1) + 5 \zeta_5 + \frac{1}{2} \licsv_5\Big({-}\frac{1}{2}\Big)
 - \frac{1}{4} \licsv_5\Big({-}\frac{1}{8}\Big)
 \\ \hphantom{\Icsv_{4,1}(2,-1) =}
 {}+ 2\licsv_5\Big(\frac{1}{4}\Big) - \frac{5}{2}\licsv_5\Big(\frac{1}{2}\Big) .
\end{gather*}
Using the duplication relation
\begin{gather*}
	\licsv_5\Big(\frac{1}{4}\Big) = 16 \licsv_5\Big({-}\frac{1}{2}\Big) + 16 \licsv_5\Big(\frac{1}{2}\Big)
\end{gather*}
and that $\licsv_5(-1) = -\frac{15}{16} \licsv_5(1)$, we can simplify this as
\begin{gather*}
\Icsv_{4,1}(2,-1) = \frac{61}{16} \zeta_5 + \frac{65}{2} \licsv_5\Big({-}\frac{1}{2}\Big) - \frac{1}{4} \licsv_5\Big({-}\frac{1}{8}\Big) + \frac{59}{2}\licsv_5\Big(\frac{1}{2}\Big) .
\end{gather*}
From the implementation in \texttt{PolyLogTools}, we find
\begin{align*}
\Icsv_{4,1}(2,-1) = {}	& 2\li_{4,1}\Big({-}\frac{1}{2},-1\Big) + 2\li_{3,1}\Big({-}\frac{1}{2},-1\Big) \log2 + \frac{6}{5} \li_{2,1}\Big({-}\frac{1}{2},-1\Big) \log^22
\\
& +2 \li_4\Big(\frac{1}{2}\Big) \log 2
-\frac{4}{5} \li_3\Big({-}\frac{1}{2}\Big) \log^22
-\frac{28}{15} \li_2\Big({-}\frac{1}{2}\Big) \log^32
\\
& +\frac{39}{20} \zeta_3 \log ^22+\frac{3}{4} \zeta_2\zeta_3-\frac{14}{15} \zeta_2 \log^32-\frac{16}{15}\log ^52
+\frac{16}{15} \log ^42 \log 3 .
\end{align*}
Likewise, the $\licsv_5$ terms evaluate as
\begin{gather*}
\frac{61}{16} \zeta_5 + \frac{65}{2} \licsv_5\Big({-}\frac{1}{2}\Big) - \frac{1}{4} \licsv_5\Big({-}\frac{1}{8}\Big) + \frac{59}{2}\licsv_5\Big(\frac{1}{2}\Big)
\\ \qquad
{}=65 \li_5\Big({-}\frac{1}{2}\Big)-\frac{5}{2}\li_5\big({-}\frac{1}{8}\Big)
+59 \li_5\Big(\frac{1}{2}\Big)+65 \li_4\Big({-}\frac{1}{2}\Big) \log 2
\\ \qquad\hphantom{=}
{}-\frac{3}{2}\li_4\Big({-}\frac{1}{8}\Big) \log 2+59 \li_4\Big(\frac{1}{2}\Big) \log 2
-\frac{27}{10} \li_3\Big({-}\frac{1}{8}\Big) \log^22
\\ \qquad\hphantom{=}
{}+39 \li_3\Big({-}\frac{1}{2}\Big) \log ^22 +\frac{52}{3}\li_2\Big({-}\frac{1}{2}\Big) \log ^32
-\frac{18}{5} \li_2\Big({-}\frac{1}{8}\Big) \log ^32
\\ \qquad\hphantom{=}
{}+\frac{61}{16}\zeta_5+\frac{1239}{40} \zeta_3 \log ^22 -\frac{59}{6} \zeta_2 \log ^32
+\frac{16}{15} \log ^42 \log 3-\frac{9}{5} \log ^52 .
\end{gather*}
Equating these two results gives an evaluation for $\li_{4,1}\big({-}\frac{1}{2},-1\big)$ in terms of products and lower depth.

Application of the lattice reduction algorithm ``LLL'' on the set of arising values (after also introducing $\zeta_4\log2$) leads to the following simpler candidate reduction for $\li_{4,1}\big({-}\frac{1}{2},-1\big)$ alone
\begin{align*}
 \li_{4,1}\Big({-}\frac{1}{2},-1\Big) \overset{?}{=} {}& -2 \li_5\Big(\frac{1}{2}\Big)
-8 \li_5\Big({-}\frac{1}{2}\Big)-2 \li_4\Big({-}\frac{1}{2}\Big) \log 2
\\
& -\frac{281}{64} \zeta_5-\frac{3}{8} \zeta_2 \zeta_3+\frac{31}{16} \zeta_4 \log 2 	+\frac{3}{8} \zeta_3 \log ^22-\frac{1}{4} \zeta_2 \log ^32+\frac{1}{24}\log ^52 ,
\end{align*}
along with a similar reduction for $\li_{3,1}\big({-}\frac{1}{2},-1\big)$
\begin{align*}
 \li_{3,1}\Big({-}\frac{1}{2},-1\Big) \overset{?}{=}{}& -3 \li_4\Big(\frac{1}{2}\Big)
-6 \li_4\Big({-}\frac{1}{2}\Big)
-2 \li_3\Big({-}\frac{1}{2}\Big) \log 2
\\
& -\frac{31}{16}\zeta_4+\frac{3}{4} \zeta_2 \log ^22-\frac{3}{4} \zeta_3 \log 2-\frac{5}{24} \log ^42 .
\end{align*}

\subsection*{Acknowledgements}
We are grateful to Falko Dulat for early collaboration on this project. We would like to thank the MITP in Mainz, where this work was started, and the HIM in Bonn and the GGI Florence for hospitality where part of this work was developed. We are particularly grateful to the organisers of the workshop on ``Modular forms, periods and scattering amplitudes'' at the ETH Z\"urich in April 2019, where some of our results had been first presented. In particular, we are grateful to Francis Brown and Erik Panzer for pointing out the relevance of the Dynkin operator in the construction of the clean single-valued analogues of multiple polylogarithms. SC is grateful to the Max-Planck-Institut f\"ur Mathematik in Bonn, for support, hospitality and excellent working conditions during his stay, where some of this work was undertaken. SC was also partially supported by DFG Eigene Stelle grant CH 2561/1-1, for Projektnummer 442093436.

\pdfbookmark[1]{References}{ref}
\LastPageEnding

\end{document}